\documentclass[11pt]{article}
\usepackage{fullpage}
\usepackage[psamsfonts]{eucal}
\usepackage{mathptmx}
\usepackage{microtype}
\renewcommand{\title}[1]{ 
  \thispagestyle{plain} 
  \vspace*{-14pt} 
  \vskip 79pt
  {\centering
   \fontsize{15}{17}\bfseries\boldmath\selectfont
   #1\par} 
  \vskip 1em
}
\renewcommand{\author}[1]{ 
  \par
  {\centering\normalsize
   #1\par\vspace*{0.05in}} 
}
\newcommand{\address}[1]{ 
  \par
  {\centering
   \fontsize{8}{10}\itshape
   #1\par} 
  \par\vspace*{11pt} 
}
\usepackage{fullpage}
\usepackage[psamsfonts]{eucal}
\usepackage{amsmath}
\usepackage{amsfonts}
\usepackage{amssymb}
\usepackage{amstext}
\usepackage[symbol]{footmisc}
\usepackage[noadjust]{cite}
\usepackage{amscd}
\usepackage{amsthm}
\usepackage{makeidx}
\usepackage{tikz}
\usepackage{graphicx}
\usepackage[colorlinks=true,linkcolor=blue]{hyperref}
\usepackage{centernot}  

\usetikzlibrary{calc}

\usepackage{supertabular}
\usepackage{enumitem}
\newcommand{\re}{\mathrm{e}}
\usepackage{titling}
\usepackage{wasysym}
\usepackage{comment}
\usepackage{cleveref}
\usepackage{listings}
\usepackage{relsize}

\newcommand{\RR}{\mathbb{R}}
\newcommand{\EE}{\mathbb{E}}

\newcommand{\ov}{\mathrm{ov}}

\newcommand{\bZ}{\mathbb{Z}}

\newcommand{\CC}{\mathbb{C}}

\usepackage{microtype}
\newcommand{\E}{\mathbb{E}}

\theoremstyle{plain}
\newtheorem{thm}{Theorem}[section]
\newtheorem{remark}[thm]{Remark}

\newtheorem{prop}[thm]{Proposition}
\newtheorem{lemma}[thm]{Lemma}
\newtheorem{lem}[thm]{Lemma}
\newtheorem{obs}[thm]{Observation}

\newtheorem{dfn}[thm]{Definition}

\newtheorem{cor}[thm]{Corollary}

\newcommand{\wh}{\widehat}

\def\title#1{
    \thispagestyle{plain}
    \vspace*{-14pt}
    \vskip 39pt
    {\centering{\titlefont #1\par}} 
    \vspace*{4pt}
    \vskip 1em
}
\def\author#1{\par
    {\centering{\authorfont#1}\par\vspace*{0.05in}}
}
\def\address#1{\par
    {\centering{\affiliationfont#1\par}}\par\vspace*{11pt}
}
\crefname{thm}{theorem}{theorems}
\Crefname{thm}{Theorem}{Theorems}
\Crefname{prop}{Proposition}{Propositions}
\Crefname{lem}{Lemma}{Lemmas}
\Crefname{lemma}{Lemma}{Lemmas}
\Crefname{dfn}{Definition}{Definitions}

\def\titlefont{\fontsize{14}{21}\boldmath\selectfont\centering{}}
\def\authorfont{\fontsize{13}{15}}

\let\affiliationfont\rhfont

\renewcommand{\E}{\mathbb{E}}

\DeclareMathOperator{\dist}{dist}
\DeclareMathOperator{\SBM}{SBM}

\DeclareMathOperator{\diag}{diag}
\DeclareMathOperator{\fin}{fin}

\definecolor{forestgreen}{RGB}{34, 139, 34}

\newcommand{\deq}{\mathrel{\mathop:}=}

\newcommand{\cL}{\mathcal{L}}

\newcommand{\be}{\begin{equation}}
\newcommand{\ee}{\end{equation}}

\newenvironment{proof*}[1][\proofname]{
  
  \begin{proof}[#1]}{\end{proof}}

\numberwithin{equation}{section}

\newcommand{\Poi}{\mathrm{Poi}}
\newcommand{\ch}{\mathrm{ch}}

\begin{document}

\title{Bethe Hessian Recovery through the Kesten–Stigum Regime}

\noindent
\begin{minipage}[c]{0.6\textwidth}
  \author{Dingding Dong}
  \address{California Institute of Technology\\
    ddong124@caltech.edu}
\end{minipage}
\begin{minipage}[c]{0.2\textwidth}
  \author{Theo McKenzie}
  \address{Yale University\\
    theo.mckenzie@yale.edu}
\end{minipage}

\begin{abstract}
We show that outlier eigenvectors of the Bethe Hessian achieve weak recovery in the stochastic block model down to the Kesten–Stigum threshold, for every fixed common expected degree $d>1$. This extends previous recovery guarantees, which required $d$ to be sufficiently large. This threshold is information-theoretically optimal for the
symmetric two-community model and is conjectured to mark
the limit of efficient recovery in broader symmetric models. Our result matches
the recovery guarantees of non-backtracking spectral methods
while using Hermitian $n\times n$ matrices.

To do this, we identify recursive formulas for the negative outlier
eigenvalues and show that their eigenspaces have nonvanishing
overlap with the corresponding planted eigenspaces.
The proof constructs approximate eigenvectors from the planted
labels on the limiting Poisson tree, with weights given by its
Green's function, and transfers these approximations to the
finite graph. 
\end{abstract}
\section{Introduction}

The stochastic block model is a canonical test case for partitioning algorithms. Vertices are assigned planted community labels, and edges are generated independently with probabilities based on the relationship between communities \cite{holland1983stochastic}. Studying recovery of the different communities has a rich history, with applications in many
areas, including social network analysis,
computational biology, and neuroscience
\cite{holland1983stochastic, airoldi2008mixed, faskowitz2018weighted}.

In the sparse regime, a central question has been when the planted communities can be detected at fixed expected degree. Here we focus on \emph{weak recovery}, that is, having a strictly non-negligible advantage over random guessing. Exact recovery is information-theoretically impossible, as, for example, some vertices are isolated. A natural benchmark is given by the \emph{Kesten--Stigum threshold} (see \Cref{def:KS}), 
above which linear estimators using the planted labels on distant
generations of the limiting tree retain nonvanishing correlation
with a label statistic at the root \cite{mossel2015reconstruction}.
This threshold is information-theoretically sharp for weak recovery
in the symmetric two-community model 
\cite{mossel2015reconstruction,massoulie2014community,mossel2018proof}, and it is conjectured to be tight for efficient algorithms for any fixed number of symmetric communities \cite{decelle2011asymptotic, mossel2015reconstruction, abbe2018community}, although information-theoretic recovery can be possible
below it \cite{abbe2018proof,chin2025stochastic}. 

Following predictions from statistical physics
\cite{decelle2011asymptotic}, efficient algorithms for weak recovery were developed
that reach the Kesten--Stigum threshold in the symmetric
two-community model
\cite{massoulie2014community,mossel2018proof}.
 In parallel, the idea of
\emph{spectral redemption} showed how the non-backtracking operator
could overcome the limitations of standard spectral methods in
sparse graphs, and planted communities could be recovered from its eigenvectors \cite{krzakala2013spectral}. Its ability to reach the
Kesten--Stigum threshold was subsequently established rigorously
\cite{bordenave2018nonbacktracking}.

The non-backtracking operator is non-Hermitian, however, and is
indexed by directed edges rather than vertices, meaning it can be much larger. These features
motivated the study of a smaller Hermitian alternative, the Bethe Hessian,
coming from statistical physics \cite{saade2014spectral}.

For a graph $G$ with adjacency matrix $A$ and degree matrix $D$, the Bethe Hessian is
\[
H(t,G)\deq (t^2-1)I+D-tA.
\]
From a physical perspective, the Bethe Hessian describes the change in the Bethe free energy when we add a small bias to the random variables.  Negative eigenvalues therefore give directions in which adding a bias decreases the Bethe free energy \cite{saade2014spectral}. Moreover, the Ihara--Bass formula gives a relationship between the determinants of the non-backtracking operator and the Bethe Hessian \cite{bass1992ihara}.

Early numerical experiments suggested that the negative eigenvalues
and corresponding eigenvectors of the Bethe Hessian reveal the latent
structure of the stochastic block model \cite{saade2014spectral}.
Rigorous analysis subsequently established that the negative outliers
count the informative directions when the common expected degree
satisfies $d\geq 2$, and that their eigenvectors yield weak recovery
for sufficiently large $d$ \cite{Stephan2025community}.
The outlier count was later extended to every fixed $d>1$, while
weak recovery from the corresponding eigenvectors remained open
throughout this full range \cite{dong2026bethe}. Bethe Hessian methods have also been used to obtain \emph{robust}
recovery above the Kesten--Stigum threshold
\cite{mohanty2024robust}, through a method combining degree
truncation with a ``boosted'' version of the Bethe Hessian given by multiplying the Bethe Hessian on both sides by a matrix counting non-backtracking
walks, and additional subspace processing. In comparison, our result
concerns the informative eigenspaces of the
matrices $H(\sqrt d,G)$ and $H(-\sqrt d,G)$ themselves.

In this work, we identify the Bethe Hessian's limiting outlier eigenvalue locations and prove
that their eigenvectors retain the planted signal. We describe the informative eigenvectors and use them to prove weak recovery throughout the fixed-degree
regime. Consider a stochastic block model with fixed parameters,
common expected degree $d>1$, and mean offspring matrix $Q$.
A planted eigenvalue $\mu$ is called informative when $|\mu|>\sqrt d$;
positive and negative informative directions correspond to
$H(\sqrt d,G)$ and $H(-\sqrt d,G)$, respectively. The following is an informal version of what we are able to show; the full versions are given as \Cref{thm:eigenvectors} and \Cref{thm:weak-recovery}.

\begin{thm}[Informal]
For every fixed $d>1$, informative Bethe Hessian eigenspaces
have nonvanishing overlap with the corresponding planted
eigenspaces. Whenever a nonconstant planted direction is
informative, a randomized rounding of the informative
spectral coordinates achieves weak recovery.
\end{thm}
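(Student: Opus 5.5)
The plan follows the outline in the abstract: build approximate eigenvectors on the limiting Poisson multitype tree, transfer them to the finite graph by local weak convergence, and then apply a perturbation argument to turn approximate eigenvectors into overlap with true eigenvectors. Fix an informative planted eigenpair $(\mu,\phi)$ of $Q$ with, say, $\mu>\sqrt d$; the case $\mu<-\sqrt d$ is symmetric, using $H(-\sqrt d,G)$. For $t=\sqrt d$ we first locate the negative outlier eigenvalue $\lambda_\mu$ of $H(t,G)$ through a recursive (fixed-point) equation. On the tree, an eigenvector of $H$ with eigenvalue $\lambda$ restricted to a subtree satisfies a Green's-function recursion. Concretely, for a child $v$ of $u$ the ratio $x_v/x_u$ equals $t\,G_v(\lambda)$, where
\[
G_v(\lambda)^{-1} = t^2-1+\deg(v)-\lambda - t^2\sum_{w \text{ child of } v} G_w(\lambda).
\]
The candidate outlier $\lambda_\mu$ is the point where the mean of the label-weighted field $\sum_{|v|=r} \phi(\sigma_v)\prod_{\text{path}} tG$ grows at rate exactly $1$, i.e.\ where $\mu\,\E[tG]=1$ in the appropriate sense. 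We take this recursion and the outlier count from \cite{dong2026bethe} as given.

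The test vector is defined on the graph $G$ by
\[
x(u)=\sum_{v:\ \dist(u,v)=r}\phi(\sigma_v)\,W_{u\to v},
\]
where $W_{u\to v}$ is the product of Green's function weights along the path, truncated at a large radius $r$ and using the planted labels $\sigma$. The key steps, in order, are:
\begin{enumerate}
\item On the Poisson tree, show that the second moment of the weighted generation sum is dominated by its mean. This is exactly the Kesten--Stigum condition $\mu^2>d$ after reweighting: the variance grows like $d^r\E[|tG|^2]^r$ while the squared mean grows like $(\mu\E[tG])^{2r}$.
\item Show that $\|(H-\lambda_\mu)x\|\le \varepsilon_r\|x\|$, with the error coming only from the boundary at depth $r$, which is small relative to the bulk.
\item Show that $\langle x,\phi(\sigma)\rangle \ge c\|x\|\sqrt n$, so the test vector correlates with the planted direction.
\end{enumerate}
All of these are local statistics, so they transfer to $G$ by local weak convergence together with concentration, since there are few short cycles within radius $r=c\log n$, or even $r$ fixed and large.

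Next we need a spectral gap. We use the result that $H(t,G)$ has exactly the predicted number of negative outliers, separated from the rest of the spectrum by $\delta>0$ \cite{dong2026bethe}. Then the Davis--Kahan argument applies: since $\|(H-\lambda_\mu)x\|\le\varepsilon_r\|x\|$, the vector $x$ is within $\varepsilon_r/\delta$ of the outlier eigenspace, and that eigenspace therefore inherits a nonvanishing overlap with the planted eigenspace. For weak recovery we project the planted labels onto the informative outlier eigenspace, embed each vertex by its spectral coordinates, and round randomly: each vertex receives a label with probability proportional to a bounded function of its coordinates. The overlap bound gives a positive expected advantage over random guessing, and concentration makes it hold with high probability.

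The main obstacle is step 2 together with the gap when $d$ is close to $1$. There, high-degree vertices and long paths make the Green's function weights heavy-tailed and $\lambda_\mu$ sits close to the bulk edge, so the error $\varepsilon_r$ must beat a gap $\delta$ that may be small. I would first try to establish uniform integrability of $|G_v(\lambda)|$ on the tree for $\lambda$ in a neighbourhood of $\lambda_\mu$, using the monotonicity of the recursion for $\lambda$ below the bulk. I would then choose $r$ large but fixed so that $\varepsilon_r<\delta/2$, which avoids needing spectral norms at logarithmic radius.
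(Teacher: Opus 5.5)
Your overall architecture matches the paper's: Green's-function-weighted sums of planted labels on the Poisson tree, a residual coming only from the depth-$r$ boundary, local transfer, the outlier count of \cite{dong2026bethe} plus a spectral-projection argument, then randomized rounding. But your step~2, which carries everything, rests on an incorrect tuning and on an input that is not available.

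You are right that the shift must make the mean of the weighted depth-$r$ sum grow at rate exactly $1$. This is not ``$\mu\,\mathbb{E}[tG]=1$'', however, because the weights along a path are not independent. Each $\Gamma_u=tG_u$ is a function of the entire subtree below $u$ and decreases with its number of children; already $\mathbb{E}[\Gamma K]<d\,\mathbb{E}[\Gamma]$, with $K$ the root's offspring count. The conditional mean of the depth-$r$ message given the tree and $\sigma(u)$ is $\rho^{r}(\mathcal{L}^{r}\mathbf 1)X_u$. Here $\rho=\mu/d$ and $(\mathcal{L}f)(T)=\Gamma(T)\sum_{j}f(T_j)$ is the transfer operator on $L^2(\nu)$. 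So the growth rate is the spectral radius $\kappa(\delta)$, and the correct equation is $\kappa(\delta)=d/\mu$. With any other shift, $\rho\kappa\neq1$, the messages at depths $r$ and $r+1$ differ at leading order, and the residual does not vanish relative to $\|x\|$.

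Nor can the location be imported from \cite{dong2026bethe}. The paper uses only its outlier count (the bound on the number of eigenvalues below $-\epsilon$); the limiting locations are among what is proved here. Existence, uniqueness and strict monotonicity of $\delta(\mu)$ must be proved directly; the monotonicity is what separates distinct $\mu$. This requires three things:
\begin{itemize}
\item continuity of $\kappa$, from the rank-one decomposition $\mathcal{L}f=\mathcal{L}(f-\mathbb{E}f)+(\mathbb{E}f)\,\mathcal{L}\mathbf 1$ and Rouch\'e;
\item the limits $\kappa(\delta)\to d/t$ as $\delta\downarrow0$ and $\kappa(\delta)\to0$ as $\delta\to\infty$;
\item a comparison using left and right Perron eigenfunctions.
\end{itemize}

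The missing mechanism is a spectral gap for $\mathcal{L}$. On mean-zero functions $\mathcal{L}$ has norm at most $\sqrt d\,c_\delta$, where $c_\delta=t/(t^2+\delta)$, and at the tuned shift $\sqrt d\,c_\delta<1\le d/\mu=\kappa$. So $\mathcal{L}$ is a rank-one perturbation of a strict contraction. Krein--Rutman plus positivity arguments then give a simple top eigenvalue, with an eigenfunction $h>0$ on infinite trees and no other peripheral spectrum. Hence $\|\kappa^{-L}\mathcal{L}^L\mathbf 1-h\|_2\le C\theta^L$. Combine this with the orthogonal innovations $\xi_x=\rho^{-1}X_x-X_{p(x)}$, whose generation-$\ell$ contribution has second moment at most $(1-\rho^2)(dc_\delta^2)^\ell\|h\|_2^2$. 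Together they give $L^2$ convergence of the boundary messages, hence the vanishing residual. Positivity $h>0$ also gives the positive planted correlation of your step~3.

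Your proposed remedy, uniform integrability of $|G_v(\lambda)|$, addresses a non-issue. For $\lambda=-\delta<0$ the monotone recursion gives $0\le tG_v\le c_\delta<1/t$ deterministically. The real difficulty is stabilizing the mean under geometry-dependent weights.

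Two smaller points:
\begin{itemize}
\item The vertex field must include the full diagonal factor $t/(t^2+\deg(u)-1+\delta-t\sum_{v\sim u}\Gamma_v^{(u)})$ at $u$. This is what yields $V(u)-\Gamma_u^{(v)}V(v)=b_{u\to v}$ and hence $(H(t,T)+\delta I)V=0$; a bare path-product sum is not an approximate eigenvector.
\item On $G_n$ the weights should come from the depth-$M$ truncated recursion, which is local and converges in $L^2$ as $M\to\infty$ under polynomial domination. They should not come from a resolvent of $H(t,G_n)$, which is singular at the outlier.
\end{itemize}
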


The construction also gives a recursive formula for the limiting outlier locations.
This closes the gap between counting informative outliers and using
their eigenvectors for recovery, showing that a spectral algorithm
based on $n\times n$ Hermitian matrices reaches the Kesten--Stigum
threshold without requiring the expected degree to tend to infinity. 
Weak recovery is only information theoretically possible when $d>1$ \cite{mossel2015reconstruction, mossel2024weak}, as otherwise the local limit Poisson tree is finite almost surely.

The main difficulty in passing to the sparse regime is that large fluctuations cause the eigenvector to have highly irregular structure. Previous works found well-behaved test vectors that identified negative quadratic forms, but still relied on some level of local approximation \cite{Stephan2025community,dong2026bethe}. Such approximations were too rough to guarantee the structure of true eigenvectors.   We instead construct
approximate eigenvectors with vanishing residual and
nonvanishing planted correlation. Together with the
outlier count, these properties identify the informative
eigenspaces throughout the fixed-degree regime.

The key point is that a planted direction above the Kesten--Stigum threshold gives rise to a negative outlier of the finite-graph Bethe Hessian. Therefore, we are looking for a solution to
\[
(H(t,G)+\delta I)\phi=0,\qquad \delta>0.
\]
Because this outlier is created by the \emph{global} planted structure, it is absent from the spectrum of the operator on the \emph{local} limit. Rather than inverting the singular finite-graph matrix at this eigenvalue, we create weights using the well-defined limiting resolvent $(H(t,T)+\delta I)^{-1}$. For the equal-expected-degree model considered here, the unmarked local limit $T$ is a Poisson Galton--Watson tree with mean offspring $d$.

The second insight is that these Green's function weights, together with the planted label field on the tree, can be used to identify the informative eigenspace. The tree resolvent supplies weights adapted to the local geometry, while the label field supplies the planted direction. Iterating this construction gives approximations that are consistent across neighboring vertices and satisfy the eigenvector equation in the limit.

To explain the weights, suppose $\phi$ is an eigenvector of $H(t,G)$ with eigenvalue $-\delta$. Then, for every $u\in V(G)$,
\[
(t^2-1+\deg(u)+\delta)\phi(u)
=t\sum_{w\sim u}\phi(w).
\]
For every $v\sim u$, whenever the relevant entries are nonzero, writing formally $\Gamma_u^{(v)}=\phi(u)/\phi(v)$ gives
\begin{equation}\label{eq:Schur}
\Gamma_u^{(v)}
=\frac{1}{t+t^{-1}(\deg(u)-1+\delta)
-\sum_{\substack{w\sim u\\w\neq v}}\Gamma_w^{(u)}}.
\end{equation}

We then perform this same recursion on the infinite local limit. There, the weights are defined by the  \emph{Green's function} rather than by ratios of a pre-existing eigenvector:
\[
\Gamma_u^{(v)}
=t\left\langle \mathbf e_u,
\bigl(H(t,T_u^{(v)})+\delta I\bigr)^{-1}\mathbf e_u
\right\rangle,
\]
where $T_u^{(v)}$ is the branch containing $u$ after removing the edge $\{u,v\}$. Here $H(t,T_u^{(v)})$ denotes the restriction of $H(t,T)$ to that branch, so its diagonal retains the degrees in $T$. With this convention, \eqref{eq:Schur} is exactly the Schur complement formula. Starting with $\Gamma_u^{(v)}=0$ at a finite-depth boundary and recursively updating gives finite-neighborhood approximations to these weights.

The local limit is a Poisson tree, which has branching structure and
independence properties we can exploit. The independence between
forward subtrees makes the Green's function recursion a closed
distributional system, known as a recursive distributional equation
(RDE). Such equations have been used to characterize limiting
Stieltjes transforms and prove convergence of empirical spectral
measures for locally convergent random graphs
\cite{bordenave2010resolvent}. Related Green's function methods have
also been developed to study absolutely continuous spectrum on trees
\cite{aizenman2006stability,keller2012absolutely,arras2023existence}.
Here we use individual Green's function entries to construct
approximate eigenvectors, with decay estimates allowing the weights
to be approximated from finite neighborhoods.

Another advantage of considering the infinite tree is that $H(t,T)$ is positive semidefinite for $t\in (-\infty,-1]\cup [1,\infty)$. Therefore, for $\delta>0$, its shifted inverse is well defined and bounded. This provides a canonical choice of  weights; the further decay and approximation estimates are what allow us to use them on the finite graph.

Given the weights $\Gamma_u^{(v)}$, we want to convert them into an eigenvector induced by the graph. To do this, we consider the planted labels as a Markov random field on the limiting tree. We go to some fixed depth $L$ and take a weighted sum of the planted values that we see there, with each weight given by a product of  weights along the path to that vertex. Our goal is to show that this estimation is stable. We therefore introduce an operator $\mathcal L$, which describes the change in the weighted construction when we pass from level $L$ to level $L+1$. For a suitable $\delta$, the relevant weighted average is preserved across levels. This stability allows the local approximations to satisfy the eigenvector equation in the limit.

In terms of the correct $\delta$, we have two conflicting parts that must balance out. The first is the rate of decay of covariance in the planted label field. Specifically, for a normalized planted label function associated with an eigenvalue $\mu$, the covariance on the broadcast tree is $(\mu/d)^{\operatorname{dist}(x,y)}$. To counteract this, we choose the Green's function weights so that the weighted signal remains at a stable scale. Heuristically, if the geometric average of edge weights is $g$, then the $d^\ell$ vertices in expectation at depth $\ell$ would contribute a signal of order
\[
d^\ell\left(\frac{\mu}{d}\right)^\ell g^\ell
=(\mu g)^\ell.
\]
Thus the matching weight would be $g=1/\mu$. If $\delta$ were smaller, the positive Green's function weights would be larger and the local formula for the eigenvector would not converge. If $\delta$ were larger, the Green's function would decay too quickly and the planted signal would disappear in the limit. At the correct $\delta$, the signal survives at a stable scale, allowing the approximations at neighboring vertices to describe the same eigenvector.

Previous analyses have used vectors derived from the Ihara-Bass formula and known structure of outlier eigenvectors of the non-backtracking operator, either directly \cite{Stephan2025community} or adapted to the outlier eigenvalue \cite{dong2026bethe}. In this work, we allow the weights to depend on both the local geometry and the spectral shift $\delta$. An appealing part of this approach is that it suggests a route to other models: construct the Green's function weights on the local limit, and the outlier eigenvalue corresponds to the shift that matches the planted signal.

\textbf{AI Acknowledgment:}
OpenAI's GPT-6 introduced the key analytic tools used in this argument. Specifically, the authors recently wrote a paper \cite{dong2026bethe} that recovered the number of outlier eigenvalues using a test function based on the local Markov random field. They gave this paper to GPT 6-Astra and asked whether these ideas could be used  to achieve eigenvector recovery. The response was a version of the general argument suggested here: to build a recursion based on the eigenvector equation and build test vectors based on this recursion. The authors realized that this was a Green's function recursion and, taking the GPT-6 argument as a preliminary draft, substantially restructured the paper with the relevant history and techniques given this interpretation. GPT-6 was further used to provide feedback on the manuscript and simplify explication.
\section{Main Results}
\subsection{The Stochastic Block Model}\label{subsec:sbm}

Given a symmetric nonnegative matrix $P \in \mathbb{R}^{r \times r}$ and a distribution $\pi$ over $[r]$ for $r\geq 2$ fixed, the stochastic block model $\mathrm{SBM}_n(P,\pi)$ generates a random graph $G$ on $n$ vertices as follows:
\begin{enumerate}
    \item Fix a label map $\sigma:[n]\to[r]$ such that $|\sigma^{-1}(i)|=\pi_i n$ for every $i\in[r]$.
    \item For every pair of vertices $u,v\in[n]$, include the edge $uv$ independently with probability $\frac{1}{n}P_{\sigma(u),\sigma(v)}$.
\end{enumerate}

The goal is to show when it is possible to detect the original communities. Specifically, we will focus on the \emph{sparse} regime, where the expected degree of each vertex is constant. 

Throughout, we assume that all vertices in $G$ have the same expected degree $d>1$, i.e., 
\begin{align}\label{eq:const-deg}
d\deq \sum_{j=1}^r P_{ij}\pi_j>1
\qquad
\text{is the same for all } i\in[r].
\end{align}
The restriction $d>1$ is necessary, as the weak-recovery threshold is at least $1$.

\subsection{Informative eigenvalues} \label{subsec:inform-ev}
Given $\mathrm{SBM}_n(P,\pi)$, define the \textit{signal matrix} $Q\deq P\Pi$, where $\Pi\deq \diag(\pi_1,\dots,\pi_r)$. Note that $Q_{ij}=P_{ij}\pi_j$ represents the expected number of neighbors in community $j$ of a vertex in community $i$. 

As $Q$ is similar to $\Pi^{1/2}P\Pi^{1/2}$, it has real eigenvalues. We order the eigenvalues of $Q$ by absolute value as
\begin{align}\label{eq:evs}
    d=\mu_1\geq|\mu_2|\geq\cdots\geq|\mu_r|.
\end{align}
\begin{dfn}\label{def:KS}
An eigenvalue $\mu$ of the signal matrix $Q$ is
\emph{informative} if $\mu^2>d$.
The model is above the Kesten--Stigum threshold if
there is an informative eigenfunction orthogonal
to the constant function in $L^2(\pi)$.
\end{dfn}

This is the natural threshold for linear reconstruction on the local Galton–Watson tree. For a nonzero eigenvalue $\mu_i$, the corresponding eigenfunction sums at depth $\ell$, normalized by $\mu_i^\ell$, form a martingale whose increment variances are proportional to $(d/\mu_i^2)^\ell$. These variances are summable precisely when $\mu_i^2>d$, so the normalized signal has bounded second moment exactly above this threshold.
Thus, $r_+$ and $r_-$ are the numbers of positive and negative informative eigenvalues, respectively.

\subsection{Our result}\label{subsec:main-result}

Let $G_n\sim\SBM_n(P,\pi)$, where the fixed parameters satisfy
\eqref{eq:const-deg}. Let $Q$ be the mean offspring matrix, and choose
real eigenfunctions $Q\varphi_i=\mu_i\varphi_i$ orthonormal in
$L^2(\pi)$, with $\varphi_1\propto\mathbf 1$.  
Write
\[
X_i(u)\deq \varphi_i(\sigma(u)).
\]
Thus $\langle X_i,X_j\rangle=n\mathbf 1_{\{i=j\}}$ under our
deterministic community-size convention. Positive and negative planted
eigenvalues are treated using $H(\sqrt d,G_n)$ and $H(-\sqrt d,G_n)$,
respectively.

The number of negative outliers is known throughout the fixed-degree
regime $d>1$ \cite{dong2026bethe}, while weak recovery from the
eigenvectors was previously established for sufficiently large $d$
\cite{Stephan2025community}. Our first result identifies the
corresponding eigenspaces throughout the fixed-degree regime.
We construct vectors from the planted
labels in local neighborhoods, with weights given by the Green's
function on the limiting tree, and show that these vectors approximate
the actual Bethe-Hessian eigenspaces. This also gives the limiting
outlier locations and the overlap with the planted directions.

\begin{thm}[Outlier eigenvalues and eigenspaces]\label{thm:eigenvectors}
Let $G_n\sim\SBM_n(P,\pi)$ have fixed parameters and common
expected degree $d>1$. List the informative planted eigenvalues,
counted with multiplicity, as
\[
\mu_1^+\ge\cdots\ge\mu_{r_+}^+>\sqrt d,
\qquad
\mu_1^-\le\cdots\le\mu_{r_-}^-<-\sqrt d.
\]
Order the eigenvalues of each Bethe Hessian increasingly.

There is a deterministic strictly increasing function
$\delta:(\sqrt d,d]\to(0,\infty)$ such that, as $n\to\infty$ and ordering eigenvalues from most negative to most positive,
\[
\begin{aligned}
\lambda_i(H(\sqrt d,G_n))
&\xrightarrow{\mathbb P}-\delta(\mu_i^+),
&& i\in[r_+],\\
\lambda_j(H(-\sqrt d,G_n))
&\xrightarrow{\mathbb P}-\delta(|\mu_j^-|),
&& j\in[r_-].
\end{aligned}
\]

For each distinct informative eigenvalue $\mu$, let $E_{\mu,n}$
be the span of the corresponding outlier eigenvectors. There exists
$c_\mu>0$, depending only on $Q$ and $\mu$, such that,
with probability tending to one,
\[
\|\operatorname{Proj}_{E_{\mu,n}}x\|_2
\ge c_\mu\|x\|_2
\qquad
\text{for every }x\in\operatorname{span}\{X_i:\mu_i=\mu\}.
\]
In particular, if $\mu_i$ is simple and $v_{i,n}$ is its
unit outlier eigenvector, then there exists a deterministic
$q_{\mu_i}>0$ such that
\begin{equation}\label{eq:eigenvector-overlap}
\frac{|\langle v_{i,n},X_i\rangle|}{\sqrt n}
=q_{\mu_i}+o_{\mathbb P}(1).
\end{equation}
\end{thm}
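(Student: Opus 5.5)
We treat the positive case $t=\sqrt d$; the negative case $t=-\sqrt d$ is identical after replacing $\mu$ by $|\mu|$, since the sign of $t$ enters only through the alternating sign of the planted covariance along paths.

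\textbf{Step 1: Green's function weights on the tree.} On the Poisson Galton--Watson tree $T$ with mean $d$, define for $\delta>0$
\[
\Gamma_u^{(v)}(\delta)=t\langle \mathbf e_u,(H(t,T_u^{(v)})+\delta I)^{-1}\mathbf e_u\rangle .
\]
These weights are well defined because $H(t,T)\succeq 0$ for $|t|\ge1$. They satisfy the Schur recursion \eqref{eq:Schur}, hence a recursive distributional equation in law. We establish:
\begin{itemize}
\item[(a)] $0<\Gamma\le 1/(t+t^{-1}\delta)$ pointwise, with each $\Gamma$ strictly decreasing and continuous in $\delta$.
\item[(b)] Truncating the recursion at depth $L$ (boundary value $0$) gives an error decaying geometrically in $L$, uniformly for $\delta$ in compact subsets of $(0,\infty)$. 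This follows from the contraction $|\partial\Gamma_u/\partial\Gamma_w|=\Gamma_u^2\le (t+\delta/t)^{-2}<1/d$ once $\delta>0$, combined with branching counts.
\end{itemize}

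Define the growth rate $g(\delta)$ via the linear operator $\mathcal L$ on the branch: for a planted eigenfunction with eigenvalue $\mu$, the expected one-level propagation of the weighted signal is
\[
\Lambda(\delta)\deq \mu\cdot \E\Big[\sum_{w\text{ child of }o}\Gamma_w^{(o)}\Big]
\]
(with size-biasing handled appropriately). Set $\delta(\mu)$ to be the unique solution of $\Lambda(\delta)=1$, i.e.\ $\mu\,\E[\Gamma]d=1$ in the simplest form. Existence uses $\Lambda\to\mu/\sqrt d\cdot(\text{something}\ge1)>1$ as $\delta\downarrow0$ (exactly where $\mu>\sqrt d$ enters), and $\Lambda\to0$ as $\delta\to\infty$. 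Strict monotonicity in $\delta$ gives uniqueness, and monotonicity in $\mu$ shows that $\delta(\cdot)$ is strictly increasing.

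\textbf{Step 2: Approximate eigenvectors.} For a planted function $X=X_i$, let
\[
\phi_L(u)=\sum_{|\gamma|=L,\ \gamma:u\to x}\Big(\prod_{e\in\gamma}\Gamma_e\Big)X(x),
\]
a sum over non-backtracking paths in $B_L(u)$ with weights computed from the depth-$L$ neighborhood. Because $G_n$ is locally tree-like, a second-moment computation over pairs of vertices (using local convergence to the multitype tree, and the fact that the covariance of labels decays as $(\mu/d)^{\mathrm{dist}}$) yields three estimates:
\begin{itemize}
\item[(i)] $\|\phi_L\|^2/n\to \kappa_L$, with $\kappa_L$ bounded as $L\to\infty$. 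This uses a martingale argument: at $\delta=\delta(\mu)$ the increment variances are summable because $\mu^2>d$.
\item[(ii)] $\langle\phi_L,X\rangle/n\to q_L$, with $q_L\ge q>0$.
\item[(iii)] $\|(H(t,G_n)+\delta(\mu))\phi_L\|^2/n\to \epsilon_L$, with $\epsilon_L\to0$.
\end{itemize}
For (iii), the Schur recursion makes the residual at $u$ equal to the difference between the level-$L$ and level-$(L+1)$ constructions. The choice $\Lambda=1$ makes this difference a martingale increment whose variance tends to zero. Cycles in $B_L$ are rare and contribute $o(n)$.

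\textbf{Step 3: Transfer to eigenspaces.} By \cite{dong2026bethe}, $H(t,G_n)$ has exactly $r_+$ negative eigenvalues, and we additionally need a spectral gap: all other eigenvalues are $\ge -o(1)$. Using approximate eigenvectors for each distinct $\mu$ and the span of $X_i$ with $\mu_i=\mu$ (these are asymptotically orthogonal across $i$), the min-max principle together with the residual bound forces $\dim\{\text{eigenvalues near }-\delta(\mu)\}\ge$ multiplicity of $\mu$. Since the counts add up to $r_+$, equality holds and the eigenvalues converge as stated.

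A Davis--Kahan argument then gives $\|\mathrm{Proj}_{E_{\mu,n}}\phi_L\|\ge(1-o_L(1))\|\phi_L\|$. Combining this with $\langle\phi_L,x\rangle\ge q\|x\|\sqrt n$ (the cross terms between distinct $X_i$ vanish asymptotically) yields $\|\mathrm{Proj}_{E}x\|\ge c_\mu\|x\|$. For simple $\mu$, the overlap $\langle v,X\rangle/\sqrt n$ converges to $\lim_L q_L/\sqrt{\kappa_L}$.

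\textbf{Main obstacle.} The hardest step is (iii) together with the spectral gap, i.e.\ controlling the variance of the level-$L$ versus level-$(L+1)$ difference uniformly, given heavy degree fluctuations at fixed $d$. We plan to handle this by a conditional second-moment computation on the tree, using the independence of the forward subtrees and the bound $\Gamma<1/\sqrt d$. Separating the outliers at distinct $-\delta(\mu)$ from the bulk near $0$ relies on the outlier count from \cite{dong2026bethe}, applied at slightly shifted $\delta$.
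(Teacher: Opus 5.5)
Your architecture matches the paper's: tree Green's function weights, weighted path sums of planted labels, the residual as a level-$L$ versus level-$(L+1)$ difference, local convergence, and the outlier count of \cite{dong2026bethe} with a spectral-projection argument. The gap is in Step~1, the calibration of $\delta(\mu)$.

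\textbf{The calibration is the wrong quantity.} You name $\mathcal L$ but then replace its growth rate by a one-generation expectation $\Lambda(\delta)$. The growth rate of $\sum_{|x|=L}\prod_{j<L}\Gamma_{x_j}$ is not a one-step quantity. Each $\Gamma_{x_j}$ is a function of the whole subtree at $x_j$, so it is correlated with the number of children of $x_j$ and with every weight $\Gamma_{x_{j+1}},\Gamma_{x_{j+2}},\dots$ below it. The expectation of the product therefore does not factor, however the size-biasing is handled. The correct rate is the spectral radius $\kappa(\delta)$ of $(\mathcal L f)(T)=\Gamma(T)\sum_{j=1}^K f(T_j)$ on $L^2(\nu)$. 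So $\delta(\mu)$ must solve $\kappa(\delta)=d/\mu$, i.e.\ $\rho\,\kappa(\delta)=1$ with $\rho=\mu/d$. (Your $\Lambda$ is also off by a factor $d$: $\mu\,\EE\sum_w\Gamma_w\to\mu\sqrt d$ as $\delta\downarrow0$, not $\mu/\sqrt d$.)

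\textbf{Why it breaks Step~2.} The level-$L$ message splits as $h_L(T)X_u$ plus orthogonal fluctuation terms, where $h_L=\rho^L\mathcal L^L\mathbf 1$. Your martingale argument controls only the fluctuations, and their summability comes from $d c_\delta^2<1$, which holds for every $\delta>0$. The mean part is stable only if $\rho^L\mathcal L^L\mathbf 1$ converges to a nonzero limit. If $\rho\kappa(\delta)\neq1$, then $h_L$ grows or decays geometrically. Either $\phi_L$ blows up or $q_L\to0$, and the relative residual $\|(H+\delta)\phi_L\|/\|\phi_L\|$ does not tend to zero. So (i)--(iii) cannot all hold at your $\delta$.

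\textbf{What is missing: Perron--Frobenius theory for $\mathcal L$.}
\begin{itemize}
\item Since $\|\mathcal L(I-\Pi)\|\le\sqrt d\,c_\delta$, $\mathcal L$ is a rank-one perturbation of a small operator. Its spectrum outside radius $\sqrt d\,c_\delta$ is therefore discrete.
\item When $\kappa>\sqrt d\,c_\delta$, Krein--Rutman gives a right eigenfunction $h\ge0$, positive exactly on infinite trees, and a strictly positive left eigenfunction $\ell$.
\item From these one gets algebraic simplicity of $\kappa$, no other peripheral spectrum, and $\|\kappa^{-L}\mathcal L^L\mathbf 1-h\|_2\le C\theta^L$. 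This convergence is what gives $b^{(L)}\to b$ in $L^2$ and a vanishing residual.
\end{itemize}

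The same structure is needed to produce $\delta(\mu)$ at all:
\begin{itemize}
\item Continuity of $\kappa(\delta)$, which is not automatic for a non-self-adjoint operator. The paper uses the rank-one criterion $\EE[(z-\widehat{\mathcal L}_\delta)^{-1}\mathcal L_\delta\mathbf 1]=1$ together with Rouch\'e.
\item The endpoint value $\kappa(0^+)=d/t$. This is where $\mu>\sqrt d$ actually enters.
\item Strict decrease, via $\langle\ell_{\delta_1},(\mathcal L_{\delta_1}-\mathcal L_{\delta_2})h_{\delta_2}\rangle>0$.
\end{itemize}

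It is also what justifies your unproved $q>0$. The overlap is proportional to $\EE\bigl[\sum_j h(T_j)/(t^2+K-1+\delta-t\sum_j\Gamma(T_j))\bigr]$, which is positive because $h>0$ on surviving subtrees.

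\textbf{A smaller fix.} Your $\phi_L(u)$ needs the vertex factor $t/(t^2+\deg(u)-1+\delta-t\sum_{v\sim u}\Gamma_v^{(u)})$ in front of $\sum_{v\sim u}b^{(L)}_{v\to u}$. This factor varies with $u$, so a bare product of cavity weights along paths is not an approximate eigenvector.
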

This says that each true eigenvector has nontrivial inner product with the planted eigenspace. Given this, we can use standard arguments to achieve weak recovery.

For labelings $\sigma,\widehat\sigma:[n]\to[r]$, define
the centered overlap
\[
\ov(\sigma,\widehat\sigma)
:=\max_{\tau\in S_r}\frac1n\sum_{u=1}^n
\left[
\mathbf1_{\{\sigma(u)=\tau(\widehat\sigma(u))\}}
-\pi_{\tau(\widehat\sigma(u))}
\right].
\]
For a fixed permutation $\tau$, the subtracted term is
the agreement expected from independent planted labels
with proportions $\pi$, keeping the predicted labels
fixed. We say that an algorithm achieves weak recovery
if its centered overlap is bounded below by a positive
constant with probability tending to one.
For balanced communities, this is the usual agreement
after relabeling, minus $1/r$.

\begin{thm}[Weak recovery]\label{thm:weak-recovery}
Assume that the SBM parameters are fixed and satisfy
\eqref{eq:const-deg}. Let $c_\mu>0$ be the constants in
\Cref{thm:eigenvectors}, and set 
$\pi_{\min}\deq \min_{a\in[r]}\pi_a.$ 
For every fixed choice of the SBM parameters, there is a
randomized polynomial-time algorithm, with constants
depending on those parameters, that rounds the informative
Bethe-Hessian embedding to an output $\widehat\sigma_n$
satisfying, with probability $1-o_n(1)$,
\[
\ov(\sigma,\widehat\sigma_n)
\geq \frac{\pi_{\min}}{64r^2(r-1)}\sum_{\substack{i\ge2\\|\mu_i|>\sqrt d}}c_{\mu_i}^2.
\]
In particular, weak recovery is possible whenever
\[
\max_{i\ge2}|\mu_i|>\sqrt d.
\]
\end{thm}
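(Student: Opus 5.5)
We derive \Cref{thm:weak-recovery} from \Cref{thm:eigenvectors} by a randomized rounding of the informative embedding. Let $E_n$ be the span of all informative outlier eigenvectors of $H(\sqrt d,G_n)$ and $H(-\sqrt d,G_n)$, restricted to the nonconstant directions. More precisely, we take $E_{\mu,n}$ for each informative $\mu_i$ with $i\ge2$, and if $\mu=d$ has multiplicity greater than one we also keep the extra nonconstant directions. Since $H(\sqrt d,G_n)$ and $H(-\sqrt d,G_n)$ are different matrices, their eigenvectors need not be orthogonal. We therefore form the projections $\Pi_\mu=\operatorname{Proj}_{E_{\mu,n}}$ separately and work with the vectors $\Pi_\mu X_i$.

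\textbf{Step 1 (spectral signal).} Let $\mathcal I=\{i\ge2:|\mu_i|>\sqrt d\}$. By \Cref{thm:eigenvectors}, with high probability $\|\Pi_{\mu_i}X_i\|_2^2\ge c_{\mu_i}^2 n$ for each $i\in\mathcal I$. Each $E_{\mu,n}$ has bounded dimension, since its dimension equals the multiplicity of $\mu$. Fix an orthonormal basis $y_1,\dots,y_k$ of the span of these projections, with $k\le r$. Then
\[
\sum_{i\in\mathcal I}\sum_{\ell}\langle y_\ell,X_i\rangle^2\gtrsim n\sum_{i\in\mathcal I}c_{\mu_i}^2 .
\]
If the spaces for different $\mu$ interact, I would instead round each $E_{\mu,n}$ separately and choose one $\mu$ at random; this costs at most a factor $r$ and fits the slack in the constant $1/(64r^2(r-1))$.

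\textbf{Step 2 (random projection and thresholding).} Draw a standard Gaussian vector $g\in E$ and set $z=\sum_\ell g_\ell y_\ell$, so that $\|z\|_2^2\approx k$. Write $z=\sum_a \beta_a \mathbf 1_{\sigma^{-1}(a)}+z^\perp$, where $z^\perp$ is orthogonal to all class indicators. The functions $\varphi_i$ for $i\ge2$ together with $\mathbf 1$ span functions on $[r]$, so the class means $\bar z_a=\beta_a$ satisfy
\[
\sum_a\pi_a n\,\bar z_a^2=\sum_i \langle z,X_i\rangle^2/n .
\]
For the nonconstant $i$ this quantity has expectation $\gtrsim\sum_{i\in\mathcal I}c_{\mu_i}^2$. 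By anticoncentration of a Gaussian quadratic form in bounded dimension, it is at least a constant fraction of its mean with constant probability. Since $z$ is orthogonal to $\mathbf 1$, the weighted mean $\sum_a\pi_a\bar z_a$ vanishes.

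Pick two classes $a,b$ whose class means $\bar z_a,\bar z_b$ are separated by $\gtrsim\bigl(\sum c^2\bigr)^{1/2}$. Then choose a random threshold $\theta$ between them, with $\theta$ drawn uniformly from a range of width $O(1)$. Vertices with $z(u)>\theta$ are assigned to one label and the rest to another, the labels being chosen at random among the $r(r-1)$ ordered pairs. This random label choice is where the factor $r^2(r-1)$ enters.

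\textbf{Step 3 (overlap computation).} Average over $\theta$: the expected fraction of class-$a$ vertices above $\theta$ equals $\E[(z(u)-\theta_0)_+]$-type linear functionals of the class means. Using $\sum_u z(u)^2=O(1)$, so that $z$ is of size $O(n^{-1/2})$ per entry in $L^2$ after rescaling by $\sqrt n$, the difference between the class-$a$ and class-$b$ acceptance rates is bounded below by a multiple of $(\bar z_a-\bar z_b)^2$. This lower bound comes from a Paley–Zygmund / Cauchy–Schwarz argument applied to the rescaled vector $\sqrt n z$, which has $L^2$ norm $O(1)$. The resulting lower bound on the expected centered overlap, for the permutation matching the guessed labels, is
\[
\gtrsim \frac{\pi_{\min}}{r^2(r-1)}\sum c_{\mu_i}^2 .
\]
The factor $\pi_{\min}$ comes from the fact that class sizes weight the mean differences.

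To boost from expectation to high probability, I would note that the overlap is bounded by $1$. A standard trick then applies: repeat the rounding polynomially many times. Since one cannot check the overlap directly, the correct approach is a Markov-type argument on $1-\ov$ combined with the fact that $\ov$ concentrates given $z$. The main obstacle is precisely this step: turning an $L^2$ signal of the embedding into a pointwise thresholding advantage while keeping explicit constants. Heavy-tailed entries of $z$ from high-degree vertices could waste mass, and handling them is where the truncation at scale $O(1)$ and the factor $64$ would be spent.
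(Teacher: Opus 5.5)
\textbf{Verdict.} Your Step 1 is sound, since $\Pi_{\mu_i}X_i\in Y$ gives $\|\operatorname{Proj}_Y X_i\|_2\ge\|\Pi_{\mu_i}X_i\|_2$. There is, however, a genuine gap in Steps 2--3, and it is the one you flag yourself.

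\textbf{The gap: a single random direction.} You collapse the embedding to one Gaussian direction $z=\sum_\ell g_\ell y_\ell$. The projected signal $\sum_a\pi_a\bar z_a^2$ is then controlled only by anticoncentration of a Gaussian quadratic form. It falls below an $\epsilon$ fraction of its mean with a probability that is a positive constant independent of $n$ (of order $\sqrt\epsilon$).

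Write $S=\sum_{i\ge2,\,|\mu_i|>\sqrt d}c_{\mu_i}^2$. What your argument gives is this: for each $\eta>0$, overlap at least $c(\eta)S$ with probability $1-\eta-o(1)$, where $c(\eta)\to0$ as $\eta\to0$. That is neither the claimed ``probability $1-o_n(1)$'' nor the explicit constant $\pi_{\min}/(64r^2(r-1))$.

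Your amplification step cannot close this gap. Since $\ov$ depends on the unknown $\sigma$, repetitions cannot be compared. A Markov bound on $1-\ov$ only ever yields constant probability. Concentration of $\ov$ given $z$ is useless when $z$ itself is a bad direction. Normalizing $z$ does not save the quantitative bound either: a random unit direction is nearly orthogonal to a dominant informative direction with constant probability, so at best you get something of order $\min_i c_{\mu_i}^2$ rather than $S$. Your fallback of rounding a randomly chosen $E_{\mu,n}$ has the same defect.

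\textbf{The missing idea: round all coordinates jointly.} The fix is to round using all $m$ informative coordinates at once, so that the only algorithmic randomness is independent across vertices. The paper proceeds as follows.
\begin{enumerate}
\item Take an orthonormal basis $U_n$ of $E_n\cap\mathbf 1^\perp$ and set $x_u=\sqrt n\,U_n(u,\cdot)^{\mathsf T}$.
\item Truncate: $f_u=x_u/\max\{M,\|x_u\|_2\}$ with $M=4\sqrt2(r-1)/\sqrt{\pi_{\min}S}$.
\item Sample $\widehat\sigma_n(u)=b$ with probability $(1+(Rf_u)_b)/r$, for a fixed $R$ with $R^{\mathsf T}R=I_m$ and $R^{\mathsf T}\mathbf 1=0$.
\end{enumerate}
The conditional expected centered confusion matrix is $D_n=\frac1r C_{f,n}R^{\mathsf T}$, where $C_{f,n}=\frac1n\sum_u(e_{\sigma(u)}-\pi)f_u^{\mathsf T}$. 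Truncation moves $MC_{f,n}$ away from the untruncated $C_n$ by at most $\sqrt2\,m/M\le\frac14\sqrt{\pi_{\min}S}$ in Frobenius norm; this is where the factor $r-1$ enters, via $m\le r-1$. Because $D_n$ has zero row and column sums, Birkhoff's theorem applied to $\mathbf 1\mathbf 1^{\mathsf T}/r+D_n/(r\|D_n\|_{\mathrm F})$ yields a permutation $\tau$ with $\sum_b(D_n)_{\tau(b),b}\ge\|D_n\|_{\mathrm F}/r$. Hoeffding over the independent vertex choices then gives the bound with probability $1-o(1)$, with no anticoncentration needed.

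Your clipped random-threshold step is a one-dimensional instance of this truncated linear rounding. So it is the random projection, not the thresholding, that has to go.

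\textbf{Smaller corrections.}
\begin{itemize}
\item Since $\ov$ already maximizes over $\tau\in S_r$, randomizing the output label pair does nothing, and the factor $r^2(r-1)$ does not come from it.
\item For a two-label output with unequal $\pi$, an arbitrary pair of classes with separated acceptance rates $p_a>p_b$ can give negative centered overlap. That overlap equals $\pi_a(p_a-\bar p)-\pi_b(p_b-\bar p)$ with $\bar p=\sum_c\pi_cp_c$. You must take $a$ maximizing and $b$ minimizing $p_c$, which gives at least $\pi_{\min}(p_a-p_b)$.
\end{itemize}
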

\begin{remark}
For the symmetric two-community model with
\[
\pi=(1/2,1/2),
\qquad
P=\begin{pmatrix}a&b\\b&a\end{pmatrix},
\]
the expected degree is $d=(a+b)/2$, and the nonconstant
planted eigenvalue is $\mu_2=(a-b)/2$. Thus
\Cref{thm:weak-recovery} gives weak recovery by randomized
rounding of Bethe-Hessian eigenvectors whenever
\[
(a-b)^2>2(a+b).
\]
This achieves the information-theoretically optimal
weak-recovery threshold, predicted in
\cite{decelle2011asymptotic} and established rigorously in
\cite{mossel2015reconstruction}.
\end{remark}

The theorem reaches the Kesten--Stigum threshold throughout the
fixed degree regime $d>1$.

The proof comes in three parts. First, we construct the Green's
function weights and study their transfer operator.
Proposition~\ref{prop:delta} identifies the shift $\delta(\mu)$
associated with each informative planted eigenvalue.
We then construct the corresponding fields on the limiting tree
and approximate them using finite neighborhoods.
Propositions~\ref{prop:finite-fields}
and~\ref{prop:eigenspace-approximation} transfer these fields
to the finite graph and identify the informative eigenspaces,
completing the proof of \Cref{thm:eigenvectors}.
Finally, \Cref{sec:recovery} proves \Cref{thm:weak-recovery}
by a randomized rounding of the spectral coordinates.
\section{Green's function weights}\label{sec:transfer}

In this section, we will consider the set of weights that work for the local limit of the SBM, the Poisson tree. Take  $G$ to be any graph which may be finite or infinite, and let $E:=\{(u,v):u\sim v\}$ denote the set of directed edges in $G$. Let $t\geq \sqrt{d}>1$,  $\delta>0$, and $c_\delta:=\frac{t}{t^2+\delta}$. 
 For $(T,o)$ a rooted tree, for any $u\in V(T)\setminus\{o\}$, let $p(u)$ denote the parent of $u$. For any $u\in V(T)$ let $\ch(u)$ denote the set of children of $u$.

We will build our resolvent recursively, starting from 0.

\begin{dfn}\label{def:gamma-finite}
    Given graph $G$, for every $M\in \bZ_{\geq 0}$ and $(u,v)\in E$, define the quantity $(\Gamma_u^{(v)})^{(M)}$ by
    \begin{align}\label{eq:gamma-recursion-finite}
        (\Gamma_u^{(v)})^{(0)}\deq0,\qquad (\Gamma_u^{(v)})^{(M+1)}\deq \frac{t}{t^2+\deg(u)-1+\delta-t\sum_{\substack{w\sim u\\w\neq v}} (\Gamma_w^{(u)})^{(M)}}.
    \end{align}
\end{dfn}

Our goal is to show that by passing to this iteration, \eqref{eq:gamma-recursion-finite} becomes a self-consistent equation. To do this, we note some properties of the recursion. 

\begin{lemma}\label{lem:gamma-increasing-bounded}
        For every $(u,v)\in E$, $0=(\Gamma^{(v)}_u)^{(0)}\leq (\Gamma^{(v)}_u)^{(1)}\leq\dots\leq c_\delta$. 
    \end{lemma}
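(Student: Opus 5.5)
We prove both claims at once by induction on $M$, simultaneously for all directed edges $(u,v)\in E$. The inductive statement is
\[
0\le (\Gamma_u^{(v)})^{(M)}\le (\Gamma_u^{(v)})^{(M+1)}\le c_\delta \quad\text{for all }(u,v)\in E.
\]
The key observation is that the right-hand side of \eqref{eq:gamma-recursion-finite} is the map
\[
F(s)=\frac{t}{a-ts},\qquad a\deq t^2+\deg(u)-1+\delta,
\]
applied to $s=\sum_{w\sim u,\,w\ne v}(\Gamma_w^{(u)})^{(M)}$. This map $F$ is increasing in $s$ on the region where $a-ts>0$, since $t>0$.

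\textbf{Base case.} We have $(\Gamma_u^{(v)})^{(0)}=0$. Then $(\Gamma_u^{(v)})^{(1)}=t/(t^2+\deg(u)-1+\delta)$. This is nonnegative, and it is at most $t/(t^2+\delta)=c_\delta$ because $\deg(u)\ge 1$ for every endpoint of a directed edge. So $0\le \Gamma^{(0)}\le\Gamma^{(1)}\le c_\delta$ holds.

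\textbf{Step 1: positivity of the denominator and the upper bound.} Suppose every $(\Gamma_w^{(u)})^{(M)}$ lies in $[0,c_\delta]$. The sum runs over $\deg(u)-1$ terms, so $0\le s\le(\deg(u)-1)c_\delta$. The denominator then satisfies
\[
a-ts\ \ge\ t^2+\delta+(\deg(u)-1)\bigl(1-tc_\delta\bigr).
\]
Now $tc_\delta=t^2/(t^2+\delta)<1$, so the second term is nonnegative. Hence the denominator is at least $t^2+\delta>0$, which gives
\[
0<(\Gamma_u^{(v)})^{(M+1)}\le \frac{t}{t^2+\delta}=c_\delta .
\]
This is exactly where the definition of $c_\delta$ is tailored: each child contributes $+1$ through the degree term and at most $-tc_\delta>-1$ through the sum. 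If infinite degree were allowed this would be meaningless, but the graphs considered are locally finite, so every sum is finite.

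\textbf{Step 2: monotonicity.} Suppose $(\Gamma_w^{(u)})^{(M-1)}\le(\Gamma_w^{(u)})^{(M)}$ for all directed edges. Then the corresponding sums satisfy $s^{(M-1)}\le s^{(M)}$, and both lie in the range where Step 1 shows the denominator is positive. Since $F$ is increasing there, $(\Gamma_u^{(v)})^{(M)}\le(\Gamma_u^{(v)})^{(M+1)}$. This closes the induction.

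The only delicate point is Step 1. One must check that the denominator stays positive and that the bound $c_\delta$ is preserved uniformly over all degrees, which is the computation $1-tc_\delta=\delta/(t^2+\delta)\ge0$. Everything else is the monotonicity of $F$.
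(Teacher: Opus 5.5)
Your proof is correct and follows essentially the same route as the paper. Both argue by induction on $M$: the recursion is increasing in the incoming sum, and each of the $\deg(u)-1$ terms $t(\Gamma_w^{(u)})^{(M)}$ is at most $1$, so the denominator stays at least $t^2+\delta$. The only difference is cosmetic: you run monotonicity and the bound in a single joint induction, with inductive bound $c_\delta$ rather than the paper's $1/t$, which makes explicit the positivity of the denominator that the paper's separate monotonicity induction uses implicitly.
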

    \begin{proof}
        We show by induction that $0=(\Gamma^{(v)}_u)^{(0)}\leq (\Gamma^{(v)}_u)^{(1)}\leq\dots$. Indeed, we have $(\Gamma^{(v)}_u)^{(0)}=0<(\Gamma^{(v)}_u)^{(1)}$ for all $(u,v)\in E$. Inductively, if $(\Gamma^{(v)}_u)^{(M-1)}\leq (\Gamma^{(v)}_u)^{(M)}$ for all $(u,v)\in E$, then $$\sum_{\substack{w\sim u\\w\neq v}} (\Gamma_w^{(u)})^{(M-1)}\leq \sum_{\substack{w\sim u\\w\neq v}} (\Gamma_w^{(u)})^{(M)},$$ which gives $(\Gamma^{(v)}_u)^{(M)}\leq (\Gamma^{(v)}_u)^{(M+1)}$.

        Furthermore, if $0\leq (\Gamma^{(v)}_u)^{(M)}\leq 1/t$ for all $(u,v)\in E$ (which is true for $M=0$), then we have
        \[
        t^2+\deg(u)-1+\delta-t\sum_{\substack{w\sim u\\w\neq v}} (\Gamma_w^{(u)})^{(M)}\geq t^2+\delta,
        \]
        which gives $ (\Gamma^{(v)}_u)^{(M+1)}\leq \frac{t}{t^2+\delta}=c_\delta<1/t$. Hence $0\leq (\Gamma^{(v)}_u)^{(M)}\leq c_\delta$ for all $M\geq 0$.
    \end{proof}

    The above lemma allows us to define the following.

    \begin{dfn}\label{def:gamma-limit}
        For every $(u,v)\in E$, define $\Gamma_u^{(v)}:=\lim_{M\to\infty}(\Gamma_u^{(v)})^{(M)}$. We know from Lemma \ref{lem:gamma-increasing-bounded} and \eqref{eq:gamma-recursion-finite} that
        \begin{align}\label{eq:gamma-recursion}
            \Gamma_u^{(v)}=\frac{t}{t^2+\deg(u)-1+\delta-t\sum_{\substack{w\sim u\\w\neq v}}\Gamma_w^{(u)}}   ,\qquad 0\leq \Gamma_u^{(v)}\leq c_\delta.
        \end{align}
        When $G$ is a rooted tree and $u$ is a non-root vertex, we will simply use $\Gamma_u$ to denote $\Gamma_u^{(p(u))}$ and use $\Gamma_u^{(M)}$ to denote $(\Gamma_u^{(p(u))})^{(M)}$. For the root $o$, we define $\Gamma_o^{(M)}$ analogously
without excluding any neighbor, and set
$\Gamma_o:=\lim_{M\to\infty}\Gamma_o^{(M)}$.
In particular,
\be\label{eq:root-gamma-recursion}
\Gamma_o=\frac{t}{t^2+\deg(o)+\delta
-t\sum_{w\sim o}\Gamma_w}.
\ee
    \end{dfn}

    \section{The transfer operator}

Given the definition of these Green's function random variables $\Gamma_u$, we now introduce an operator that gives the value $\phi(u)/\phi(v)$ in terms of a ratio at distance $L$ rather than $L-1$. This is the straightforward fact that on any graph, we can write the Green's function as a sum over neighbors (see, e.g. \cite[Lemma 8.3]{erdHos2017dynamical}). From now on, we restrict to $t\geq \sqrt{d}>1$.

\subsection{Rooted trees}

\begin{dfn}\label{def:Omega}
    Let $\Omega$ denote the collection of all locally finite rooted trees with root $o$. For every such tree $T\in \Omega$,  
    let $T_1,\dots,T_K$ be the $\deg(o)$ forward subtrees rooted at children of $o$. Also,
    define the map $\Gamma:\Omega\to (0,c_\delta]$ by $ \Gamma(T):=\Gamma_o$, with $\Gamma_o$ as in Definition \ref{def:gamma-limit}.
\end{dfn}
We associate with $\Omega$ the probability measure given by the Poisson Galton-Watson tree. 
\begin{dfn}\label{def:nu}
Let $\nu$ be the law on $\Omega$ of a Galton--Watson tree
with offspring distribution $\Poi(d)$. Write $L^2(\Omega)$ for
the space of measurable functions $f:\Omega\to\CC$ satisfying
\[
    \int_\Omega |f(T)|^2\,d\nu(T)<\infty,
\]
where functions that are equal $\nu$-almost surely are identified.
Equip this space with the inner product
\[
    \langle f,g\rangle
    :=\int_\Omega \overline{f(T)}\,g(T)\,d\nu(T)
    =\EE_{T\sim\nu}\bigl[\overline{f(T)}\,g(T)\bigr].
\]
\end{dfn}

We first show the uniqueness and continuity of the function $\Gamma$ on $\Omega$. 
\begin{lemma}\label{lem:gamma-uniqueness}
Fix $t\geq\sqrt d>1$. For each $\delta>0$, there is a
unique, up to $\nu$-almost-sure equality, measurable
function $\Gamma_\delta:\Omega\to[0,c_\delta]$
satisfying \eqref{eq:root-gamma-recursion}. 
Moreover, $\delta\mapsto\Gamma_\delta$ is continuous
in $L^1(\nu)$ on $(0,\infty)$.
\end{lemma}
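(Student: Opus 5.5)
The plan is to read \eqref{eq:root-gamma-recursion} as a fixed-point equation for a function on $\Omega$, and to show that this equation is a strict contraction in $L^1(\nu)$ on functions with values in $[0,c_\delta]$.

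\textbf{Reformulation.} For $T\in\Omega$ with forward subtrees $T_1,\dots,T_K$, $K=\deg(o)$, a child $w$ of $o$ satisfies $\deg(w)-1=\#\ch(w)$. This equals the root degree of $T_w$, the subtree of $T$ rooted at $w$. So the child recursion in \eqref{eq:gamma-recursion} and the root recursion \eqref{eq:root-gamma-recursion} have the same form. The requirement on a measurable $f:\Omega\to[0,c_\delta]$ is therefore
\[
f(T)=\Phi_\delta f(T):=\frac{t}{t^2+K+\delta-t\sum_{i=1}^K f(T_i)}\qquad\text{for $\nu$-a.e.\ }T.
\]
Under $\nu$, conditionally on $K\sim\Poi(d)$, the $T_i$ are i.i.d.\ with law $\nu$. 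Hence if $f=g$ $\nu$-a.s., then $f(T_i)=g(T_i)$ for all $i$, $\nu$-a.s. So $\Phi_\delta$ is well defined on $\nu$-equivalence classes, and the equation makes sense in $L^1(\nu)$.

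\textbf{Existence.} The function $T\mapsto\Gamma_o^{(M)}$ depends only on the ball of radius $M$ around $o$, so it is measurable. By \Cref{lem:gamma-increasing-bounded} it increases monotonically in $M$ to $\Gamma_o\in[0,c_\delta]$, which is therefore measurable. Passing to the limit in \eqref{eq:gamma-recursion-finite} (the denominators stay at least $t^2+\delta$) shows that $\Gamma_o$ solves the equation everywhere.

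\textbf{Uniqueness.} Let $f,g$ be two solutions with values in $[0,c_\delta]$. The denominators are at least $t^2+\delta$, so
\[
|f(T)-g(T)|=\frac{f(T)g(T)}{t}\cdot t\Bigl|\sum_i (f(T_i)-g(T_i))\Bigr|\le c_\delta^2\sum_{i=1}^K|f(T_i)-g(T_i)|.
\]
Take expectations under $\nu$ and use Wald's identity with the i.i.d.\ structure of the subtrees:
\[
\EE|f-g|\le c_\delta^2 d\,\EE|f-g|.
\]
The constant is
\[
c_\delta^2 d=\frac{t^2 d}{(t^2+\delta)^2}<\frac{d}{t^2}\le 1,
\]
using $t^2\ge d$ and $\delta>0$. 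Hence $\EE|f-g|=0$.

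\textbf{Continuity in $\delta$.} Fix $\delta,\delta'>0$ and write $\Delta=\Gamma_\delta-\Gamma_{\delta'}$. Subtracting the two equations and clearing denominators gives
\[
\Delta(T)=\frac{\Gamma_\delta(T)\Gamma_{\delta'}(T)}{t}\Bigl[(\delta'-\delta)+t\sum_i\Delta(T_i)\Bigr].
\]
Therefore
\[
\EE|\Delta|\le \frac{c_\delta c_{\delta'}}{t}|\delta-\delta'|+c_\delta c_{\delta'}d\,\EE|\Delta|.
\]
On a compact subinterval $[a,b]\subset(0,\infty)$ we have $c_\delta c_{\delta'}d\le t^2d/(t^2+a)^2<1$. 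This yields a local Lipschitz bound $\EE|\Delta|\le C_a|\delta-\delta'|$, hence continuity in $L^1(\nu)$.

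\textbf{Main obstacle.} The only delicate point is the almost-sure bookkeeping. One must justify that a $\nu$-a.s.\ solution can be evaluated on the subtrees $T_i$, and that $\EE\sum_{i\le K}h(T_i)=d\,\EE h$ for measurable $h\ge0$. Both follow from the Galton--Watson description of $\nu$: the root's offspring count is $\Poi(d)$ and the subtrees are i.i.d.\ copies of $\nu$, independent of $K$. Once this is in place, the contraction estimates above are routine.
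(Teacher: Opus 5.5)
Your proposal is correct and follows essentially the same route as the paper. Uniqueness comes from the same $L^1(\nu)$ contraction $|\Gamma-\Gamma'|\le c_\delta^2\sum_j|\Gamma(T_j)-\Gamma'(T_j)|$ with factor $dc_\delta^2<1$, and continuity comes from the same difference identity yielding $\EE|\Gamma_\delta-\Gamma_{\delta'}|\le \frac{c_\delta c_{\delta'}}{t}|\delta-\delta'|/(1-dc_\delta c_{\delta'})$. Your explicit handling of existence via the monotone limit, of the agreement between the root and child recursions, and of the null-set bookkeeping fills in details the paper leaves implicit.
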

\begin{proof}
    Suppose $\Gamma$ and $\Gamma'$ are two different functions that satisfy \eqref{eq:root-gamma-recursion}. Then we have
    \[
    \Gamma(T)=\frac{t}{t^2+K+\delta-t\sum_{j=1}^K\Gamma(T_j)},\qquad \Gamma'(T)=\frac{t}{t^2+K+\delta-t\sum_{j=1}^K\Gamma'(T_j)}.
    \]
    Write $\Gamma=\Gamma(T),\Gamma_j=\Gamma(T_j), K=K(T)$. Then 
    \[
    |\Gamma-\Gamma'|=\Gamma \Gamma'|\sum_{j=1}^K(\Gamma_j-\Gamma_j')|\leq c_\delta^2\sum_{j=1}^K|\Gamma_j-\Gamma_j'|.
    \]
    Since $T,T_j$ have the same law, we get that $\EE|\Gamma-\Gamma'|\leq dc_\delta^2 \EE|\Gamma-\Gamma'|$, so $\EE|\Gamma-\Gamma'|=0$. This shows uniqueness of $\Gamma$.

    Similarly, for the obtained $\Gamma$, we get that
    \[
    |\Gamma_{\delta}-\Gamma_{\delta_0}|=\Gamma_{\delta} \Gamma_{\delta_0}\left|\frac{\delta_0-\delta}{t}+\sum_{j=1}^K(\Gamma_\delta(T_j)-\Gamma_{\delta_0}(T_j))\right|\leq c_\delta c_{\delta_0}\left|\frac{\delta_0-\delta}{t}+\sum_{j=1}^K(\Gamma_\delta(T_j)-\Gamma_{\delta_0}(T_j))\right|.
    \]
    We obtain that
    \begin{align*}
        \EE |\Gamma_{\delta}-\Gamma_{\delta_0}|&\leq \frac{c_\delta c_{\delta_0}}{t}|\delta_0-\delta|+c_\delta c_{\delta_0} d\EE |\Gamma_{\delta}-\Gamma_{\delta_0}|,\\
        &\leq \frac{\frac{c_\delta c_{\delta_0}}{t}|\delta_0-\delta|}{1-dc_\delta c_{\delta_0}}\to 0 \text{ as $\delta\to\delta_0$.}
    \end{align*}
\end{proof}

We then have the following relation from perturbing $\delta$.

\begin{lemma}
    $\Gamma_{\delta}$ is strictly decreasing in $\delta$.   Moreover $\Gamma_{\delta}\uparrow 1/t$ as $\delta\downarrow 0$, almost surely.
\end{lemma}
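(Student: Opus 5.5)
Monotonicity will be proved at the level of the finite iterates $\Gamma^{(M)}$ and then passed to the limit. Fix $\delta<\delta'$. By induction on $M$, I claim $(\Gamma^{(v)}_u)^{(M)}_{\delta'}\le(\Gamma^{(v)}_u)^{(M)}_{\delta}$ for every directed edge $(u,v)$. At $M=0$ both are $0$. For the inductive step, the denominator in \eqref{eq:gamma-recursion-finite} grows in two ways when $\delta$ is replaced by $\delta'$: the $\delta$ term increases, and the subtracted sum $t\sum\Gamma^{(M)}$ does not increase. Hence the ratio decreases. Letting $M\to\infty$ gives $\Gamma_{\delta'}\le\Gamma_\delta$ pointwise on every tree, and the same argument applies at the root via \eqref{eq:root-gamma-recursion}.

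For strict inequality I use the limiting fixed-point equation at the root. Since both values lie in $(0,c_\delta]$,
\[
\frac{t}{\Gamma_{\delta'}(T)}-\frac{t}{\Gamma_\delta(T)}=(\delta'-\delta)+t\sum_{w\sim o}\bigl(\Gamma_\delta(T_w)-\Gamma_{\delta'}(T_w)\bigr)\ge\delta'-\delta>0,
\]
where the inequality uses the pointwise monotonicity just proved. So $\Gamma_{\delta'}(T)<\Gamma_\delta(T)$ for every $T$. Because $\Gamma_\delta$ was constructed as the pointwise limit (this is the version chosen in Lemma~\ref{lem:gamma-uniqueness}), the statement holds everywhere and not merely almost surely.

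For the limit $\delta\downarrow0$, monotonicity shows that $\Gamma_\delta(T)$ increases as $\delta$ decreases, and it is bounded by $c_\delta<1/t$. So a limit $\Gamma_0(T)\le 1/t$ exists for every $T$. Passing to the limit in \eqref{eq:gamma-recursion} by monotone convergence, which works because the sums are finite for locally finite trees, $\Gamma_0$ satisfies the $\delta=0$ recursion with values in $[0,1/t]$. It remains to show that $\Gamma_0\equiv1/t$ almost surely. Let $D(T)=1/t-\Gamma_0(T)\ge0$. Rearranging the $\delta=0$ equation at a non-root vertex $u$ with $k$ children gives
\[
t^2+k-1-t\sum_j\Gamma_0(T_j)=\frac{t}{\Gamma_0(T)}.
\]
Writing $\Gamma_0(T_j)=1/t-D_j$ turns this into
\[
t^2-1+t\sum_j D_j=\frac{t}{\Gamma_0}=\frac{t^2}{1-tD},
\]
so a vanishing defect propagates. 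I note that $1/t$ is a fixed point only at the non-root level; the root equation \eqref{eq:root-gamma-recursion} has an extra $+1$ in the denominator, so the root value should be handled separately. I would read the claim as concerning the non-root branch variables $\Gamma_u$, and I flag this as a point to check.

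The main obstacle is excluding a smaller fixed point, that is, showing $D=0$ almost surely. My first attempt is a lower bound on the iterates. For small $\delta$, compare $\Gamma_\delta$ with the deterministic fixed point of a regular-like recursion; this does not work directly, because Poisson degrees fluctuate. Instead I would argue through the variational and Green's function meaning: $t\Gamma_u=\langle e_u,(H(t,T_u)+\delta)^{-1}e_u\rangle$, and as $\delta\to0$ this is controlled by the bottom of the spectrum of $H(t,T_u)$. The key computation is that $\phi\equiv$ the $1/t$-ratio profile, namely $\phi(x)=t^{-\mathrm{dist}(u,x)}$, satisfies $H\phi=0$ exactly at non-root vertices. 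This makes $1/t$ the maximal nonnegative solution. The iterates $\Gamma^{(M)}_\delta$ are the finite-depth truncations, and a direct induction shows $\Gamma^{(M)}_0$ converges to the maximal fixed point below $1/t$.

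The remaining step is to show that the minimal solution reached from $0$ equals $1/t$. For this I would use the supermartingale $\sum_{|x|=\ell}t^{-\ell}$, which has mean $(d/t)^\ell$. When $t\ge\sqrt d$ this allows the boundary-condition error at depth $M$ to vanish along the iteration. That is where I expect the real difficulty to lie, and I may need $t>\sqrt d$, or a second-moment bound for $t=\sqrt d$.
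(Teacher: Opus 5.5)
Your monotonicity argument (induction on the iterates, then the root identity for $t/\Gamma_{\delta'}-t/\Gamma_\delta$) is correct and is essentially the paper's. The gap is in the second claim: you never show that $D=1/t-\Gamma_0$ vanishes almost surely, and the routes you sketch do not close it.

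\begin{itemize}
\item Iterating from $0$ produces the \emph{minimal} nonnegative fixed point, not the maximal one as you say at one point. So identifying $1/t$ as the maximal solution gives nothing.
\item $\sum_{|x|=\ell}t^{-\ell}$ has mean $(d/t)^\ell$, which grows for $t<d$. It is therefore not a supermartingale and controls no boundary error. For a truncated test function the boundary term scales like $\sum_{|x|=N}t^{-2N}$, whose mean $(d/t^2)^N$ does not vanish at $t=\sqrt d$.
\item There is an off-by-one error. A non-root vertex with $k$ children has degree $k+1$, so the equation reads $t/\Gamma_0(T)=t^2+k-t\sum_j\Gamma_0(T_j)$, not $t^2+k-1-\cdots$. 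With your version $1/t$ would not even be a fixed point, which contradicts your next sentence.
\item For the same reason, your flag about \eqref{eq:root-gamma-recursion} is unfounded. There $\deg(o)$ is the number of children of $o$, and the sum has $\deg(o)$ terms. So $1/t$ is a fixed point of the root equation as well. This is also what makes $\Gamma(T_j)$ coincide with the branch variable at the $j$th child, and no separate treatment is needed.
\end{itemize}

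The missing idea is a one-line expectation argument, and you were one step away from it. From the corrected identity $t^2+t\sum_jD_j=t^2/(1-tD)$, set $e\deq tD=1-t\Gamma_0\in[0,1)$ and $S\deq\sum_{j=1}^Ke_j$. Then
\[
e=\frac{S}{t^2+S}\le\frac{S}{t^2},
\]
with strict inequality whenever $S>0$. Under $\nu$, $K\sim\Poi(d)$ is independent of the i.i.d.\ subtrees, so $\EE S=d\,\EE e$. If $\EE e>0$, then $S>0$ with positive probability, and
\[
\EE e<\frac{d}{t^2}\,\EE e\le\EE e
\]
because $t^2\ge d$. This is a contradiction, so $e=0$, i.e.\ $\Gamma_0=1/t$, almost surely.

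This is the paper's proof. It needs no Green's function representation or supermartingale. The case $t=\sqrt d$ also requires no second-moment input, since the strictness of $S/(t^2+S)<S/t^2$ on $\{S>0\}$ already does the work.
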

\begin{proof}
    We first show that $\Gamma_{\delta}$ is nonincreasing. Let $\delta_1<\delta_2$. Recall that $\Gamma^{(0)}_{\delta_1}=\Gamma^{(0)}_{\delta_2}=0$. It is also easy to check from the recursion that if $\Gamma^{(m)}_{\delta_1}\geq \Gamma^{(m)}_{\delta_2}$, then $\Gamma^{(m+1)}_{\delta_1}\geq \Gamma^{(m+1)}_{\delta_2}$.

    Now observe that 
    \[
    \Gamma_{\delta_1}(T)- \Gamma_{\delta_2}(T)=\Gamma_{\delta_1}(T) \Gamma_{\delta_2}(T)\left(\frac{\delta_2-\delta_1}{t}+\sum_{j=1}^K( \Gamma_{\delta_1}(T_j)- \Gamma_{\delta_2}(T_j)) \right).
    \]
    Since $\Gamma_{\delta_1}(T) \Gamma_{\delta_2}(T)>0$, the above is $>0$. 

    Since all $\Gamma_{\delta}<1/t$, let $\Gamma_0\deq \lim_{\delta\downarrow 0}\Gamma_\delta$ (the pointwise limit function). Letting $e\deq 1-t\Gamma_0$, we know from the recursion formula that
    \begin{align*}
        e=1-\frac{t^2}{t^2+\sum_{j=1}^K e_j }=\frac{\sum_{j=1}^K e_j}{t^2+\sum_{j=1}^K e_j }.
    \end{align*}
    If $\EE e>0$, then we have $\EE e< \EE\left(t^{-2}\sum_{j=1}^K e_j\right) = t^{-2}d \EE e\leq \EE e$, a contradiction. Hence $\EE e=0$ and $\Gamma_0=1/t$ almost surely.
\end{proof}

All future stated (in)equalities in $L^2(\Omega)$ occur with probability 1.

\subsection{Introducing the transfer operator}

\begin{dfn}
    On  $L^2(\Omega)$, define the operator $\cL:L^2(\Omega)\to L^2(\Omega)$ by
    \[
    (\cL f)(T):=\Gamma(T)\sum_{j=1}^K f(T_j).
    \]
\end{dfn}

\begin{obs}
The above map is well-defined. In other words, $\|\cL\| <\infty$.
\end{obs}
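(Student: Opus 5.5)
We need to show that $\cL$ is bounded on $L^2(\Omega)$, i.e. $\|\cL f\|^2\le C\|f\|^2$ for some finite $C$. There are two ingredients. The first is the pointwise bound $0\le\Gamma(T)\le c_\delta$ from \Cref{def:gamma-limit}. The second is the recursive structure of the Poisson Galton--Watson law $\nu$: under $\nu$, the root degree $K\sim\Poi(d)$, and conditionally on $K$ the forward subtrees $T_1,\dots,T_K$ are i.i.d. with law $\nu$.

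First, measurability. Each $f(T_j)$ is a measurable function of $T$, since $T\mapsto (K,T_1,\dots,T_K)$ is measurable (with some fixed ordering convention for the children, irrelevant because the sum is symmetric). The function $\Gamma$ is measurable as a pointwise limit of the finite iterates $\Gamma^{(M)}$, each of which depends only on a finite neighborhood of the root. Hence $\cL f$ is measurable.

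For the norm bound, we start from the pointwise estimate and apply Cauchy--Schwarz to the sum over children:
\[
|(\cL f)(T)|^2\le c_\delta^2\Bigl|\sum_{j=1}^K f(T_j)\Bigr|^2\le c_\delta^2\,K\sum_{j=1}^K|f(T_j)|^2 .
\]
Taking expectations and conditioning on $K$, each term $\EE|f(T_j)|^2$ equals $\|f\|^2$ because $T_j\sim\nu$. Therefore
\[
\|\cL f\|^2\le c_\delta^2\,\EE[K^2]\,\|f\|^2=c_\delta^2(d^2+d)\|f\|^2 .
\]
This shows $\|\cL\|\le c_\delta\sqrt{d^2+d}<\infty$.

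The same computation also shows that $\cL f$ does not depend on the choice of representative of $f$. If $f=0$ $\nu$-a.s., then $\EE\sum_{j\le K}|f(T_j)|=d\,\EE|f|=0$, so $\cL f=0$ a.s.

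The main subtlety is this last point together with the conditional i.i.d. structure; both are standard for Galton--Watson trees. A sharper bound is available if desired: expanding $\EE|\sum_j f(T_j)|^2=d\,\EE|f|^2+d^2|\EE f|^2$ gives $\|\cL\|^2\le c_\delta^2(d+d^2)$ again, and $c_\delta^2 d$ on mean-zero $f$. Only finiteness is needed here.
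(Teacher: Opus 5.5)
Your proof is correct and follows the paper's route: both combine $\Gamma\le c_\delta$ with the conditionally i.i.d. forward subtrees to get $\|\cL\|\le c_\delta\sqrt{d^2+d}$. The only difference is that the paper obtains this from the exact expansion $\EE|\sum_j f(T_j)|^2=d\|f\|_2^2+d^2|\EE f|^2$ (the identity in your last paragraph, whose mean-zero case later gives $\|\wh{\cL}\|\le\sqrt d\,c_\delta$) rather than from your Cauchy--Schwarz step, and it does not spell out the measurability and representative-independence checks that you include.
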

\begin{proof}
    Recall from the previous section that $\Gamma(T)\leq c_\delta$ for all $T$. Thus 
    \begin{align}\label{eq:L-norm}
        \|\cL f\|_2^2=\EE_{T\sim\nu}[\Gamma(T)^2|\sum_{j=1}^{\deg(o)}f(T_j)|^2]\leq c_\delta^2(d\|f\|_2^2+d^2|\EE f|^2)\leq c_\delta^2(d^2+d)\|f\|_2^2.
    \end{align}
\end{proof}

Our next goal will be to show that this transfer operator has a spectral gap. Therefore iterating it sufficiently will cause convergence to the top eigenspace. 
\begin{dfn}
    Let $\kappa$ denote the spectral radius of $\cL$. That is, $\kappa=\sup\{|z|:z\in\sigma(\cL)\}$ where $\sigma$ is the spectrum of $\cL$.
    Of course, $\kappa\leq \|\cL\| <\infty$.
\end{dfn}
We now want to show in our regime of interest properties of the top eigenspace. Let $\Pi$ be the one-dimensional projection operator, and define the operator $\wh{\cL}$ by
\[
\Pi f\deq(\EE_\nu f)\mathbf 1,\qquad \wh{\cL}\deq {\cL}(I-\Pi).\] 
We first give an upper bound on $\|\wh{\cL}\|$. Consider any $f\in L^2(\Omega)$ and let $g=f-\EE f$. Since $\EE g=0$, we know from the calculation of \eqref{eq:L-norm} that
\begin{align*}
    \|\cL g\|_2^2\leq c_\delta^2\EE_{T\sim \nu}\left(|\sum_{j=1}^K g(T_j)|^2\right)=dc_\delta^2\|g\|_2^2.
\end{align*}
Hence $\|\wh\cL\|\leq \sqrt{d}c_\delta$. Note that $\sqrt{d}c_\delta=\frac{\sqrt{d}t}{t^2+\delta}<1$. We can use this to give a spectral gap on $\cL$.

\begin{prop}\label{prop:kappa}
Suppose that the spectral radius $\kappa$ of $\cL$ satisfies
$\kappa>\sqrt d\,c_\delta$. Then:
\begin{enumerate}
    \item There exist nonzero nonnegative right and left
    eigenfunctions $h,\ell\in L^2(\Omega)$ satisfying
    \[
    \cL h=\kappa h,\qquad \cL^*\ell=\kappa\ell.
    \]
    Almost surely over $T\sim\nu$, we have $\ell(T)>0$, while $h(T)=0$ on finite trees
    and $h(T)>0$ on infinite trees.

    \item $\kappa$ is an algebraically simple eigenvalue
    of $\cL$.

    \item $\kappa$ is the only point of $\sigma(\cL)$
    with absolute value $\kappa$.

    \item There exist $C<\infty$ and $0<\theta<1$ such that,
    for every integer $L\geq0$,
    \[
    \|\kappa^{-L}\cL^L\mathbf 1-h\|_2\leq C\theta^L.
    \]
\end{enumerate}
\end{prop}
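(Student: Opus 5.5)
The plan is a quasi-compactness argument built on the splitting $\cL=\cL\Pi+\wh\cL$. The piece $\cL\Pi f=(\EE f)\,\cL\mathbf 1$ is rank one, and $\|\wh\cL\|\le\sqrt d\,c_\delta<1$. I will also use that $\cL$ is a positive operator, since $\Gamma\ge0$ implies that $f\ge0$ gives $\cL f\ge0$.

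\textbf{Step 1: reduce the spectral equation to a scalar one.} For $|z|>\sqrt d\,c_\delta$ the operator $z-\wh\cL$ is invertible, with inverse given by a Neumann series, so
\[
z-\cL=(z-\wh\cL)\bigl(I-(z-\wh\cL)^{-1}\cL\Pi\bigr).
\]
The second factor is the identity minus a rank-one operator. Hence $z\in\sigma(\cL)$ if and only if
\[
F(z):=\EE\bigl[(z-\wh\cL)^{-1}\cL\mathbf 1\bigr]=1.
\]
The function $F$ is analytic on $\{|z|>\sqrt d\,c_\delta\}$. It follows that $\sigma(\cL)$ outside the disc of radius $\sqrt d\,c_\delta$ consists of isolated eigenvalues of finite (rank-one) multiplicity. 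Under the hypothesis $\kappa>\sqrt d\,c_\delta$, the point $\kappa$ is such an eigenvalue. The eigenfunction is explicit:
\[
h\propto(\kappa-\wh\cL)^{-1}\cL\mathbf 1=\sum_{m\ge0}\kappa^{-m-1}\wh\cL^m\cL\mathbf 1 .
\]
Algebraic simplicity, item 2, should follow from $F'(\kappa)\neq0$. Expanding in the Neumann series, $F(z)=\sum_m z^{-m-1}\EE[\wh\cL^m\cL\mathbf 1]$.

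\textbf{Step 2: positivity and Perron--Frobenius.} Since $\cL$ is positive and $\kappa$ is an isolated point of the spectrum at the spectral radius, the Krein--Rutman/de~Pagter theory for positive operators on the Banach lattice $L^2(\nu)$ yields nonnegative eigenfunctions $h$ and $\ell$. Concretely, I would take the residues of $(z-\cL)^{-1}$ at $z=\kappa$: for real $z>\kappa$ one has $(z-\cL)^{-1}=\sum_m z^{-m-1}\cL^m\ge0$, so the Riesz projection, a limit of $(z-\kappa)(z-\cL)^{-1}$, is a positive operator.

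For the support of $h$, iterate $\cL^L h=\kappa^L h$:
\[
h(T)=\kappa^{-L}\sum_{|v|=L}\Bigl(\prod_{u\preceq v,\,u\neq o}\Gamma\Bigr)\,\Gamma(T)\,h(T_v),
\]
with the path weights all strictly positive. If $T$ is finite, taking $L$ larger than its height gives $h(T)=0$. For infinite trees I need $h(T_v)>0$ for some deep vertex $v$. My approach is to show that $\{h>0\}$ is an inherited property, so a 0--1 law for Galton--Watson trees makes it hold either almost surely on survival or almost never; the latter contradicts $h\neq0$. The same argument applies to $\ell$. Here $\cL^*$ acts by pushing mass from the root to subtrees, $(\cL^*g)(S)=\EE[\sum_j\Gamma(T)g(T)\mathbf 1\{T_j\in\cdot\}]$ written via the size-biased tree. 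This gives $\ell>0$ on finite trees as well, since every tree arises as a subtree.

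\textbf{Step 3: peripheral spectrum and convergence.} Suppose $\cL g=z g$ with $|z|=\kappa$. Positivity gives $\cL|g|\ge\kappa|g|$. Pairing with $\ell>0$ forces $\cL|g|=\kappa|g|$, and by simplicity $|g|\propto h$. I would then show that the phase is constant along the tree from the equality case of the triangle inequality in $\Gamma\sum_j g(T_j)$, which forces $z=\kappa$. This gives item 3. Since there are only finitely many eigenvalues in any annulus beyond $\sqrt d\,c_\delta$, the rest of the spectrum lies in a disc of radius $\theta\kappa$ with $\theta<1$. Writing $\cL=\kappa P+R$ with $P=h\langle\ell,\cdot\rangle/\langle\ell,h\rangle$ and spectral radius $r(R)<\theta\kappa$, we get $\kappa^{-L}\cL^L\mathbf 1=P\mathbf 1+O(\theta^L)$. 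After normalizing $h$ so that $\langle\ell,\mathbf 1\rangle=\langle\ell,h\rangle$, this is item 4.

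I expect the main obstacle to be Step 3 together with the strict positivity in Step 2. The aperiodicity argument needs care because $\Gamma$ vanishes nowhere but the tree may have finite branches where $h=0$. I would handle this by restricting attention to the infinite-type children in the equality condition.
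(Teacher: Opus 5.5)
Your overall plan follows the paper's: a rank-one perturbation for quasi-compactness, Krein--Rutman for $h,\ell\ge0$, a branching argument for the support of $h$, pairing with $\ell$ for the peripheral spectrum, and the spectral projection for item 4. But the step ``the same argument applies to $\ell$'' fails, and items 2--4 all rest on it.

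For $h$, the set $\{h=0\}$ (not $\{h>0\}$) contains every finite tree and is closed under passing to child subtrees. The 0--1 law together with $h\neq0$ therefore forces $h>0$ on survival. For $\ell$ the closure runs the other way. Because $\cL^*$ pushes mass from a tree to its subtrees, $\ell(T)>0$ forces $\ell(T_j)>0$, so it is $\{\ell>0\}$ that is closed under passing to subtrees, with no a priori information on finite trees. The branching argument then only gives a dichotomy:
\begin{enumerate}
\item either $\ell>0$ a.s.\ on infinite trees, and then your remark that every tree is a subtree of an infinite one does give $\ell>0$ everywhere;
\item or $\ell=0$ a.s.\ on infinite trees.
\end{enumerate}
Nothing in your proposal excludes the second case. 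The fact $\ell\neq0$ does not contradict it either, since $\cL^*$ preserves functions supported on finite trees. The missing idea is the paper's. The operator $\cL_{\fin}=P_{\fin}\cL P_{\fin}$ is again a rank-one perturbation of an operator of norm at most $\sqrt d\,c_\delta$, and induction on height shows it has no nonzero eigenvalues. Hence $\kappa\notin\sigma(\cL_{\fin})$, whereas $P_{\fin}\ell=\ell$ would give $\cL_{\fin}^*\ell=\kappa\ell$.

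Your route to algebraic simplicity does not get around this. The coefficients $\EE[\wh\cL^m\cL\mathbf 1]$ have no sign, because $\wh\cL=\cL(I-\Pi)$ is not a positive operator. So the Neumann expansion says nothing about $F'(\kappa)$. In fact, since $\cL^*=\wh\cL^*+\langle\cL\mathbf 1,\cdot\rangle\mathbf 1$, the left eigenfunction is a nonzero multiple of $(\kappa-\wh\cL^*)^{-1}\mathbf 1$. With $h=(\kappa-\wh\cL)^{-1}\cL\mathbf 1$ one then gets $F'(\kappa)=-\langle\ell,h\rangle/\langle\cL\mathbf 1,\ell\rangle$. Thus $F'(\kappa)\neq0$ is equivalent to $\langle\ell,h\rangle>0$, i.e.\ to $\ell$ not vanishing on infinite trees, which is exactly the unproved case above. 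Once $\ell>0$ is established, several things follow:
\begin{itemize}
\item simplicity, either via your $F'$ or via the paper's Jordan-chain identity $0<\langle\ell,h\rangle=\langle\ell,(\cL-\kappa)g\rangle=0$;
\item the pairing step in item 3;
\item the well-definedness of the projection $P$ in item 4.
\end{itemize}

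A smaller point in item 3: the equality case of the triangle inequality only makes the phases of \emph{sibling} infinite subtrees agree. Along an edge the phase rotates by $\kappa/z$. To conclude $z=\kappa$, add the paper's observation that two infinite siblings are i.i.d., so a.s.\ equality of their phases forces the phase to be a.s.\ constant.
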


\begin{proof}
\textbf{Proof of (1).}
 Since
\[
\cL f=\wh{\cL}f+(\EE f)\cL\mathbf 1,
\qquad \|\wh{\cL}\|\leq \sqrt d\,c_\delta<\kappa,
\]
$\cL$ is a rank-one perturbation of an operator of norm at
most $\sqrt d\,c_\delta$. Thus $r_{\mathrm{ess}}(\cL)\leq \sqrt d\,c_\delta$, and every
spectral point outside $\{|z|\leq \sqrt d\,c_\delta\}$ is an isolated
eigenvalue of finite algebraic multiplicity. The generalized
Krein--Rutman theorem
\cite[Appendix Theorem~2.4 and Remark on p.~314]
{schaefer1999topological}
therefore gives nonzero $h,\ell\geq0$ satisfying
\[
\cL h=\kappa h,\qquad \cL^*\ell=\kappa\ell.
\]

Let $K(T)$ be the degree of the root of $T$. Then
\[
\kappa h(T)=\Gamma(T)\sum_{j=1}^{K(T) }h(T_j)
\]
shows by induction on height that $h(T)=0$ for finite $T$.
Since $\Gamma>0$, the probability $p=\nu(h(T)=0)$ satisfies
$p=\EE[p^{K(T)}]=\re^{d(p-1)}$. As $h\centernot{\equiv} 0$, $p$ equals the
extinction probability of the Galton-Watson tree. Hence $h>0$ almost surely on
infinite trees.

We next show that $\ell$ cannot be supported exclusively on finite
trees. Let $P_{\fin}$ be multiplication by their indicator and
consider $\cL_{\fin}\deq P_{\fin}\cL P_{\fin}$. As above, $\cL_{\fin}$ is a
rank-one perturbation of an operator of norm at most $\sqrt d\,c_\delta$,
so every spectral point outside $\{|z|\leq \sqrt d\,c_\delta\}$ is an
eigenvalue. Induction on tree height shows that $\cL_{\fin}$ has
no nonzero eigenvalues. Consequently,
$\sigma(\cL_{\fin})\subseteq\{|z|\leq \sqrt d\,c_\delta\}$. If $P_{\fin}\ell=\ell$, however,
then
\[
\cL_{\fin}^*\ell=P_{\fin}\cL^*P_{\fin}\ell=\kappa\ell,
\]
contradicting $\kappa>\sqrt d\,c_\delta$.

Now let $A\deq \{T:\ell(T)=0\}$ and suppose $p=\nu(A)>0$.
Positivity and the fact that
\[
\langle\ell,\cL^m\mathbf 1_A\rangle
=\kappa^m\langle\ell,\mathbf 1_A\rangle=0
\]
imply that, almost surely, whenever $\ell(T)>0$, every
depth-$m$ subtree avoids $A$. Writing $Z_m$ for the size
of generation $m$, the branching property gives
\[
\nu(\ell>0,\ |T|=\infty)
\leq
\EE\big[(1-p)^{Z_m}\mathbf 1_{\{Z_m>0\}}\big]
\longrightarrow0.
\]
Indeed, $Z_m\to\infty$ on survival and is eventually zero
on extinction. This would force $\ell$ to be supported on finite trees,
contrary to the preceding paragraph. Thus $\ell(T)>0$
almost surely.

\textbf{Proof of (2).}
The $\kappa$-eigenspace is one-dimensional: otherwise it
contains a nonzero $g$ with $\EE g=0$, giving a contradiction, as
\[
\kappa\|g\|_2=\|\cL g\|_2
=\|\wh{\cL}g\|_2\leq \sqrt d\,c_\delta\|g\|_2.
\]
A nontrivial Jordan chain would then give
$(\cL-\kappa I)g=h$ after rescaling, whereas
\[
0<\langle\ell,h\rangle
=\langle\ell,(\cL-\kappa I)g\rangle=0.
\]
Hence $\kappa$ is algebraically simple.

\textbf{Proof of (3).}
Suppose $\lambda\in\sigma(\cL)$ and $|\lambda|=\kappa$.
Since $\kappa>\sqrt d\,c_\delta$, $\lambda$ is an eigenvalue; choose
$f\neq0$ with $\cL f=\lambda f$. Then
\[
(\cL-\kappa I)|f|\geq0,
\qquad \langle\ell,(\cL-\kappa I)|f|\rangle=0.
\]
Since $\ell>0$ almost surely, $(\cL-\kappa I)|f|=0$, and (2) means we can rescale $f$ so that 
$|f|=h$. Write $f=h\circ \zeta$, where $\circ$ is the entrywise product and 
$|\zeta|=1$ on infinite trees. Now using the fact that $f$ and $h$ are eigenvectors of $\cL$, we get that
\[
\Gamma(T)\left|\sum_{j=1}^{K(T)} h(T_j)\zeta(T_j)\right|=\kappa h(T)
=\Gamma(T)\sum_{j=1}^{K(T)} h(T_j).
\]
Whenever $K(T)=2$ where the two child trees are infinite,
their phases therefore agree almost surely. These phases
are independent with the same law, so that law is a
point mass. Thus $f$ is a multiple of $h$, and
$\lambda=\kappa$.

\textbf{Proof of (4).}
By (2), the spectral projection at $\kappa$ is 
\[P\deq (\ell^*h)^{-1}h\ell^*.
\]
The spectral decomposition in (1) and (3) implies  that the
remaining spectrum lies in a strictly smaller disk.
Thus, the spectral radius of $(\cL-\kappa P)$ is strictly less than $\kappa$. Also, 
$P(\cL-\kappa P)=(\cL-\kappa P)P=0$, and
\[
\kappa^{-m}\cL^m-P=\kappa^{-m}(\cL-\kappa P)^m
\qquad (m\geq1).
\]
The spectral radius formula therefore gives, for some
$C<\infty$ and $0<\theta<1$,
\[
\|\kappa^{-m}\cL^m-P\| \leq C\theta^m.
\]
We can normalize so that $P\mathbf 1=h$, proving the
asserted convergence.
\end{proof}

\subsection{Injectivity of the largest eigenvalue}

Recall that $\kappa\deq \kappa(\delta)$ is always associated with $\delta,t$. Throughout $t$ is fixed. In this section we let $\delta$ vary. We now have the following.

\begin{prop}\label{prop:delta}
$\kappa(\delta)$ is continuous at all $\delta_0$ with $\kappa(\delta_0)>\sqrt{d}c_{\delta_0}$. 
    For every $t<\mu\leq d$, there is a unique $\delta=\delta(\mu)>0$ such that
    $\kappa(\delta)=d/\mu$.
    The function $\delta(\mu)$ is strictly increasing.
\end{prop}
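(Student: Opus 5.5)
We plan to prove the three claims in order: continuity, then strict monotonicity of $\kappa$ in $\delta$ together with its limits at $0$ and $\infty$, and finally existence, uniqueness and monotonicity of $\delta(\mu)$ via the intermediate value theorem.

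\textbf{Continuity.} Write $\cL_\delta f=\Gamma_\delta\sum_j f(T_j)$. By \Cref{lem:gamma-uniqueness}, $\Gamma_\delta\to\Gamma_{\delta_0}$ in $L^1(\nu)$, and since $0\le\Gamma\le c_\delta$ is bounded this upgrades to convergence in every $L^p$. The operator difference is multiplication by $(\Gamma_\delta-\Gamma_{\delta_0})$ composed with the bounded map $f\mapsto\sum_j f(T_j)$. This is not obviously small in operator norm, because multiplication by an $L^p$-small function is not norm-small on $L^2$. I would avoid norm continuity and instead use the rank-one structure: $\cL_\delta=\wh\cL_\delta+\Pi$-part. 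Outside the disk of radius $\sqrt d c_\delta<\kappa(\delta_0)$, the eigenvalues are the zeros of the Fredholm-type equation
\[
\lambda\ \text{eigenvalue}\iff \EE\bigl[(\lambda-\wh\cL_\delta)^{-1}\cL_\delta\mathbf 1\bigr]=1 .
\]
The function $F_\delta(\lambda)=\EE[(\lambda-\wh\cL_\delta)^{-1}\cL_\delta\mathbf 1]$ can be expanded as a Neumann series $\sum_m\lambda^{-m-1}\EE[\wh\cL_\delta^m\cL_\delta\mathbf 1]$. Each term is a finite-depth expectation of products of at most $m+1$ bounded $\Gamma$'s along the tree, so it converges as $\delta\to\delta_0$ by the $L^1$ continuity and dominated convergence. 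The series converges uniformly because $\|\wh\cL_\delta\|\le\sqrt d c_\delta$. Hence $F_\delta\to F_{\delta_0}$ uniformly on compacts outside the small disk, and Hurwitz's theorem applied to the analytic functions $F_\delta-1$ gives continuity of the top real zero $\kappa(\delta)$. The simplicity from \Cref{prop:kappa}(2) makes the zero simple.

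\textbf{Monotonicity.} Since $\Gamma_\delta$ is strictly decreasing in $\delta$, we have $\cL_{\delta_1}\ge\cL_{\delta_2}$ as positive operators, with strict inequality on $h$. Pairing with the positive left eigenfunction gives
\[
\kappa(\delta_1)\langle\ell_2,h_1\rangle=\langle\ell_2,\cL_{\delta_1}h_1\rangle>\langle\cL_{\delta_2}^*\ell_2,h_1\rangle=\kappa(\delta_2)\langle\ell_2,h_1\rangle .
\]
Here $\ell_2>0$ almost surely and $h_1>0$ on survival, from \Cref{prop:kappa}(1). This requires $\kappa(\delta_2)>\sqrt d c_{\delta_2}$. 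In general, $\kappa$ is nonincreasing by positivity and Gelfand's formula.

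\textbf{Limits.} As $\delta\downarrow0$, $\Gamma_\delta\uparrow1/t$, so $\EE[\cL^m\mathbf 1]\to t^{-m}d^m$ by monotone convergence. This gives $\kappa\ge$ nearly $d/t>\sqrt d/t\ge\sqrt d c_\delta$, hence $\liminf\kappa\ge d/t$. For the upper bound, $\kappa\le\lim\|\cL^m\mathbf 1\|^{1/m}$, computed with $\Gamma\le c_\delta<1/t$, gives $\kappa<d/t$ roughly. So $\kappa(\delta)\to d/t$. As $\delta\to\infty$, $\|\cL\|\le c_\delta\sqrt{d^2+d}\to0$.

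\textbf{Existence and uniqueness.} Let $\delta^*=\sup\{\delta:\kappa(\delta)>\sqrt d c_\delta\}$. On $(0,\delta^*)$, $\kappa$ is continuous and strictly decreasing from $d/t$. For $t<\mu\le d$ the target $d/\mu$ lies in $[1,d/t)$. I need to show that $\kappa$ drops below $d/\mu$ before leaving the good region, i.e.\ that $\sqrt d c_\delta<1\le d/\mu$ always, which is true. At the boundary $\delta^*$, $\kappa\le\sqrt d c_{\delta^*}<1$ by upper semicontinuity of the spectral radius; for $\delta>\delta^*$ it stays below $1$ by monotonicity. The intermediate value theorem then gives $\delta(\mu)$, and strict decrease of $\kappa$ gives uniqueness. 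Since $d/\mu$ decreases in $\mu$ and $\kappa$ decreases in $\delta$, $\delta(\mu)$ is strictly increasing.

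\textbf{Main obstacle.} I expect the hardest step to be continuity, specifically justifying the Hurwitz/Neumann argument, and the semicontinuity of the spectral radius at $\delta^*$ without norm continuity.
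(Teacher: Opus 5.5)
Your architecture is the paper's: the rank-one criterion $z\in\sigma(\cL_\delta)\iff F_\delta(z)=1$ with Neumann series and Rouch\'e/Hurwitz for continuity, the $\ell$--$h$ pairing for strict decrease, $\|\cL_\delta\|\to0$ at infinity, and the intermediate value theorem. Your continuity step is fine.

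\textbf{First gap: the limit $\liminf_{\delta\downarrow0}\kappa(\delta)\ge d/t$.} Existence for $\mu$ close to $t$ depends on this limit, and your argument does not establish it. Convergence $\EE[\cL_\delta^m\mathbf 1]\to(d/t)^m$ for each fixed $m$ gives no lower bound on $\kappa(\delta)$. A fixed-$m$ moment $\langle\mathbf 1,\cL^m\mathbf 1\rangle^{1/m}$ of a positive operator can exceed its spectral radius: the $2\times2$ matrix with a single $1$ above the diagonal has $\langle\mathbf 1,A\mathbf 1\rangle>0$ and spectral radius $0$, and the mass of $\cL^m\mathbf 1$ on finite trees behaves exactly this way. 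Passing to $\kappa=\lim_m\|\cL_\delta^m\|^{1/m}$ would need control uniform in $\delta$, which you do not have. What you need is lower semicontinuity of an isolated eigenvalue under the merely strong convergence $\cL_\delta\to\cL_0$. The paper obtains it as follows:
\begin{enumerate}
\item set $\Gamma_0=1/t$, which is justified by $\Gamma_\delta\uparrow 1/t$ almost surely;
\item note that $\EE[\wh\cL_0 f]=0$, so $F_0(z)=d/(tz)$ on $|z|>\sqrt d/t$;
\item conclude $\kappa(0)=d/t>\sqrt d\,c_0$;
\item run the same Rouch\'e argument at $\delta_0=0$.
\end{enumerate}
Your Hurwitz machinery does this once you include the endpoint. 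Your upper bound is also misstated: $\|\cL^m\mathbf 1\|\le\|\cL^m\|$ bounds $\kappa$ from below, not above. Use instead the pointwise bound $|\cL_\delta^m f|\le c_\delta^m S^m|f|$, where $S$ is the child-sum operator. This gives $\kappa(\delta)\le d c_\delta<d/t$.

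\textbf{Second gap: the $\delta^*$ device.} The set $\{\delta:\kappa(\delta)>\sqrt d\,c_\delta\}$ is not known to be an interval, since both $\kappa$ and $\sqrt d\,c_\delta$ decrease in $\delta$. Nothing rules out $\delta^*=+\infty$. And, as you suspect, upper semicontinuity of the spectral radius is not available under strong convergence.

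The remedy is to state continuity with a fixed radius. Suppose $\|\wh\cL_\delta\|\le b$ for $\delta$ near $\delta_0$. Hurwitz on $\{|z|>b\}$ then gives two facts: zeros of $F_{\delta_0}-1$ persist, and limits of zeros of $F_\delta-1$ are zeros of $F_{\delta_0}-1$. Together these say $\max\{\kappa(\delta),b\}\to\max\{\kappa(\delta_0),b\}$, with no hypothesis at $\delta_0$. Now choose $\eta$ with $\kappa(\eta)>d/\mu$ and set $b=\sqrt d\,c_\eta<1$. Then $\max\{\kappa,b\}$ is continuous on $[\eta,\infty)$, exceeds $d/\mu$ at $\eta$, and tends to $b<d/\mu$. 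So it attains $d/\mu$, and since $d/\mu>b$, $\kappa$ attains $d/\mu$ there.

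For uniqueness, run your pairing on $\{\kappa\ge1\}$, which is an interval by monotonicity. Your pairing uses both $h_{\delta_1}$ and $\ell_{\delta_2}$. So both $\delta_1$ and $\delta_2$, not only $\delta_2$, must satisfy the hypothesis of Proposition~\ref{prop:kappa}. The condition $\kappa\ge1>\sqrt d\,c_\delta$ guarantees this.
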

\begin{proof}
We extend $\Gamma_\delta$ to $\delta=0$ by setting
$\Gamma_0=1/t$.

\paragraph{Continuity.}
The rank-one decomposition
\[
\cL_\delta f
=\wh{\cL}_\delta f
 +(\EE f)\cL_\delta\mathbf 1
\]
gives, for $|z|>\|\wh{\cL}_\delta\|$,
\[
z\in\sigma(\cL_\delta)
\quad\Longleftrightarrow\quad
\EE\big[(z-\wh{\cL}_\delta)^{-1}
             \cL_\delta\mathbf 1\big]=1.
\]
Dominated convergence gives
\[
\cL_\delta\mathbf 1\longrightarrow
\cL_{\delta_0}\mathbf 1
\quad\text{in }L^2,
\qquad
\wh{\cL}_\delta\longrightarrow
\wh{\cL}_{\delta_0}
\quad\text{strongly}.
\]
If for some $b>0$, $\|\wh{\cL}_\delta\|\leq b$ locally uniformly in $\delta$,
the Neumann series $(z-\wh{\cL}_\delta)^{-1}=z^{-1}+\wh{\cL}_\delta z^{-2}+\cdots$ therefore yields
\[
\EE\big[(z-\wh{\cL}_\delta)^{-1}
             \cL_\delta\mathbf 1\big]
\longrightarrow
\EE\big[(z-\wh{\cL}_{\delta_0})^{-1}
             \cL_{\delta_0}\mathbf 1\big]
\]
locally uniformly on $\{|z|>b\}$.
Rouch\'e's theorem, applied to these expressions minus $1$,
shows that every spectral point of $\cL_{\delta_0}$ in this
region persists nearby. Conversely, every limit in this
region of spectral points of $\cL_\delta$ belongs to
$\sigma(\cL_{\delta_0})$.
Since the spectra are locally uniformly bounded, it follows that
\[
\max\{\kappa(\delta),b\}
\longrightarrow
\max\{\kappa(\delta_0),b\}.
\]
If $\kappa(\delta_0)>\sqrt d\,c_{\delta_0}$, continuity of
$c_\delta$ allows us to choose $b$ such that, locally,
\[
\sqrt d\,c_\delta<b<\kappa(\delta_0).
\]
This proves continuity of $\kappa$ at $\delta_0$.

\paragraph{Endpoint limits.}
At $\delta=0$,
\[
(\cL_0f)(T)=\frac1t\sum_{j=1}^K f(T_j),
\qquad
\EE[\cL_0f]=\frac dt\,\EE f.
\]
Thus $\EE[\wh{\cL}_0f]=0$, and the Neumann series gives
\[
\EE\big[(z-\wh{\cL}_0)^{-1}\cL_0\mathbf 1\big]
=\frac{d}{tz},
\qquad |z|>\frac{\sqrt d}{t}.
\]
The spectral criterion above therefore shows that $d/t$ is
the only spectral point outside this disk. Hence
$\kappa(0)=d/t$, and continuity at zero gives
\[
\lim_{\delta\downarrow0}\kappa(\delta)=\frac dt.
\]
At the other endpoint,
\[
\kappa(\delta)\leq\|\cL_\delta\|
\leq c_\delta\sqrt{d+d^2}\longrightarrow0
\qquad\text{as }\delta\to\infty.
\]

\paragraph{Existence and uniqueness.}
Positivity and the pointwise decrease of $\Gamma_\delta$
give $\cL_{\delta_2}\leq\cL_{\delta_1}$ whenever
$\delta_1<\delta_2$, so $\kappa$ is nonincreasing.

Fix $y\in[1,d/t)$. By the limit at zero, choose $\eta>0$
such that $\kappa(\eta)>y$. For every $\delta\ge\eta$,
\[
\|\wh{\cL}_\delta\|\le b:=\sqrt d\,c_\eta<1.
\]
Thus $\max\{\kappa(\delta),b\}$ is continuous on
$[\eta,\infty)$, starts above $y$, and converges to $b<y$.
It follows that $\kappa$ attains every value in $[1,d/t)$
at a positive parameter.

To prove uniqueness, suppose $\delta_1<\delta_2$ and
$\kappa(\delta_1),\kappa(\delta_2)\geq1$.
Both parameters satisfy the conditions of \Cref{prop:kappa}, so we take the corresponding left eigenfunction
$\ell_{\delta_1}$ and right eigenfunction $h_{\delta_2}$.
Consequently,
\[
\begin{aligned}
&\big(\kappa(\delta_1)-\kappa(\delta_2)\big)
 \langle\ell_{\delta_1},h_{\delta_2}\rangle\\
&\qquad=
\langle\ell_{\delta_1},
(\cL_{\delta_1}-\cL_{\delta_2})h_{\delta_2}\rangle>0,
\end{aligned}
\]
because $\Gamma_{\delta_1}>\Gamma_{\delta_2}$ and
$h_{\delta_2}>0$ on infinite trees.
Thus $\kappa$ is strictly decreasing wherever $\kappa\geq1$.

For $t<\mu\leq d$, we have $d/\mu\in[1,d/t)$, so there is
a unique $\delta(\mu)>0$ satisfying
$\kappa(\delta(\mu))=d/\mu$.
Since $d/\mu$ strictly decreases with $\mu$,
$\delta(\mu)$ strictly increases.
\end{proof}

\section{Boundary messages}

Given the weights and the spectral properties of the transfer operator, we now want to consider the messages given by the tree with these weights. In the following two sections, let $\mu_i> t$ be any eigenvalue of $Q$ with eigenvector $\varphi_i$.  Let $\rho=\rho_i\deq \mu_i/d$, $\delta=\delta_i:=\delta(\mu_i)>0$.

Let $G$ be a graph with vertex labeling $\sigma:V(G)\to[r]$. Define   $X=X_i:V(G)\to \RR$ by $(X_i)_v=\varphi_i(\sigma(v))$. Observe that, if $\sigma(v)\sim \pi$, then we have
\begin{align}\label{eq:Xi-orthonormal}
\EE[(X_i)_v(X_j)_v]=\sum_{a=1}^r\pi_a\varphi_i(a)\varphi_j(a)=\mathbf 1_{\{i=j\}}.
\end{align}

We introduce the  labelled  analogues of $\Omega$ and $\nu$ defined in the previous section.

\begin{dfn}
    Let $\widetilde \Omega$ denote the collection of all vertex-labeled rooted trees with root $o$, with labels in $[r]$.
    Let $\widetilde\nu$ be the law on $\widetilde\Omega$ of a multi-type Galton--Watson tree, where the root $o$ has type $\sigma(o)\sim\pi$, and each vertex of type $i$ independently has $\Poi(P_{ij}\pi_j)$ children of type $j$ for every $j\in[r]$.
\end{dfn}

Observe that, for any $T\sim\widetilde \nu$, the underlying graph $T_0$ is distributed as $\nu$.

We now define the boundary weights associated with the eigenvector. To do this, we will do the usual Green's function decomposition into a product of cavity roots (e.g.  \cite[Proposition 3.1]{aizenman2013resonant}). We also use $(\Gamma_u^{(v)})^{(M)}$ and $\Gamma_u^{(v)}$ defined in Definitions \ref{def:gamma-finite} and \ref{def:gamma-limit}.

\begin{dfn}\label{def:b}
    For every $(u,v)\in E$ and $M,L\geq 0$, define $b_{u\to v}^{(L,M)}$ and $b_{u\to v}^{(L)}$ by
\begin{align*}\label{eq:bL}
    &b_{u\to v}^{(0,M)}\deq X_u,\qquad b_{u\to v}^{(L+1,M)}\deq (\Gamma_u^{(v)})^{(M)}\sum_{\substack{w\sim u\\w\neq v}}b_{w\to u}^{(L,M)};\\
    &b_{u\to v}^{(0)}\deq X_u,\qquad b_{u\to v}^{(L+1)}\deq \Gamma_u^{(v)} \sum_{\substack{w\sim u\\w\neq v}}b_{w\to u}^{(L)}.
\end{align*}
When $G$ is a rooted tree, we use $b_{u}^{(L,M)}$ to denote $b_{u\to p(u)}^{(L,M)}$ and use $b_{u}^{(L)}$ to denote $b_{u\to p(u)}^{(L)}$. 
\end{dfn}

\begin{prop}\label{prop:bL-converge}
Suppose $G\sim\widetilde\nu$ is the multitype Galton--Watson tree. There is an $L^2$ function $b_{u}:\widetilde \Omega\to \RR$ such that 
    \be\label{eq:blimitrecursion}
\lim_{L\to\infty }b_{u}^{(L)}=b_{u},\qquad b_{u}=\Gamma_u\sum_{w\in\ch(u)}b_{w}.
    \ee
\end{prop}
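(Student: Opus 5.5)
The plan is to unroll the recursion and expand the labels along ancestral lines; the geometry then enters only through iterates of the transfer operator $\cL$, and the label fluctuations become orthogonal increments. This needs two structural facts about $\widetilde\nu$.

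First, by the common-degree assumption \eqref{eq:const-deg}, the unlabeled tree $T_0$ is $\nu$-distributed. Conditionally on $T_0$, the labels form a broadcast process started from $\pi$ with transition kernel $Q/d$.

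Second, the weights $\Gamma_x$ are measurable functions of $T_0$, since they are the Green's function weights of \Cref{def:gamma-limit}. Hence, conditionally on $T_0$, we have $\E[X_c\mid \sigma(p(c))]=\rho X_{p(c)}$ with $\rho=\mu_i/d$.

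Unrolling \Cref{def:b}, write $W_{u,w}=\prod_x\Gamma_x$ for the product over the vertices $x$ on the path from $u$ to $p(w)$ (with $W_{u,u}=1$). Then
\[
b_u^{(L)}=\sum_{w\in D_L(u)}W_{u,w}X_w,
\]
where $D_L(u)$ denotes the descendants of $u$ at depth $L$ below $u$. For $a\ne u$ set $\xi_a\deq X_a-\rho X_{p(a)}$. Telescoping gives $X_w=\rho^{L}X_u+\sum_{a}\rho^{L-|a|}\xi_a$, where $a$ runs over the path from $u$ (exclusive) to $w$ and $|a|$ is the depth of $a$ below $u$. Resumming over $w$ yields the key identity
\[
b_u^{(L)}=\rho^L(\cL^L\mathbf 1)(T_u)\,X_u+\sum_{m=1}^{L}\sum_{a\in D_m(u)}W_{u,a}\,\xi_a\,\rho^{L-m}(\cL^{L-m}\mathbf 1)(T_a),
\]
using that $\sum_{w\in D_k(a)}W_{a,w}=(\cL^k\mathbf 1)(T_a)$. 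The candidate limit is
\[
b_u\deq h(T_u)X_u+\sum_{m\ge1}\sum_{a\in D_m(u)}W_{u,a}\xi_a h(T_a),
\]
where $h$ is the right eigenfunction of \Cref{prop:kappa}, normalized as in part (4). This uses $\rho=\mu_i/d=1/\kappa(\delta_i)$ from \Cref{prop:delta}. \Cref{prop:kappa} applies because $\kappa=d/\mu_i\ge1>\sqrt d\,c_\delta$.

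For the $L^2$ estimates, condition on $T_0$. The $\xi_a$ are conditionally mean zero and orthogonal to one another and to $X_u$. Indeed, if $a$ is not an ancestor of $a'$, then $\E[\xi_{a'}\mid T_0,\ \text{labels outside the subtree of }a']=0$ by the Markov property, and this conditioning covers every other $\xi_a$ and $X_u$. They are also uniformly bounded, since $\varphi_i$ is bounded. Therefore
\[
\E|b_u^{(L)}-b_u|^2\le C\,\E|\rho^L\cL^L\mathbf 1-h|^2+C\sum_{m\ge1}\E\sum_{a\in D_m(u)}W_{u,a}^2\,f_{L-m}(T_a)^2,
\]
where $f_k=\rho^k\cL^k\mathbf 1-h$ for $k\ge0$ and $f_k=h$ for $k<0$. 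Now use $W_{u,a}\le c_\delta^{m}$ from \Cref{lem:gamma-increasing-bounded}. Also use that, given $|D_m(u)|$, each $T_a$ is $\nu$-distributed, and $\E|D_m(u)|=d^m$. The $m$-th term is then at most $(dc_\delta^2)^m\|f_{L-m}\|_2^2$. By \Cref{prop:kappa}(4), $\|f_k\|_2\le C\theta^k$ for $k\ge0$. Since $dc_\delta^2<1$, the sum $\sum_m (dc_\delta^2)^m\theta^{2(L-m)}$ tends to zero geometrically. The same bound, with $f=h$, shows that $b_u$ itself is a well-defined $L^2$ function.

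Finally, the recursion $b_u=\Gamma_u\sum_{w\in\ch(u)}b_w$ follows by letting $L\to\infty$ in the exact identity $b_u^{(L+1)}=\Gamma_u\sum_{w\in\ch(u)}b_w^{(L)}$. This is justified because $\Gamma_u\le c_\delta$ and the children's errors have controlled second moments: the number of children has finite second moment, and $\E\bigl(\sum_{w\in\ch(u)}|e_w|\bigr)^2\le(d+d^2)\|e\|_2^2$ by the computation in \eqref{eq:L-norm}.

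The main obstacle is the orthogonality and independence bookkeeping. The weights $W_{u,a}$ depend on the whole geometry, including the subtree $T_a$, so one must condition on $T_0$ throughout and rely only on the Markov property of the labels given the shape. The one place where subtree independence is needed is $\E\sum_{a\in D_m}W_{u,a}^2f(T_a)^2\le c_\delta^{2m}d^m\|f\|_2^2$, and this is safe precisely because we use the deterministic bound $W_{u,a}\le c_\delta^m$ before averaging.
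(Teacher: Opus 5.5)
Your proposal is correct and is essentially the paper's proof: the same telescoping of $X_w$ along ancestral lines into conditionally orthogonal increments (your $\xi_a$ is $\rho$ times the paper's), the same identification of the coefficients with $\rho^{k}\cL^{k}\mathbf 1$ on subtrees converging to $h$ by \Cref{prop:kappa}(4) since $\rho=1/\kappa$, the same $(dc_\delta^2)^m$ summation using the deterministic bound on path weights, and the same passage to the limit in the recursion. There are two small, harmless slips. First, conditioning on the labels outside the subtree of $a'$ covers $\xi_a$ when $a$ is not a \emph{descendant} of $a'$, not when $a$ is not an ancestor of $a'$. Second, the bound $\E\bigl(\sum_{w\in\ch(u)}|e_w|\bigr)^2\le(d+d^2)\|e\|_2^2$ should be justified by Cauchy--Schwarz, $\E\bigl[K\sum_{w}|e_w|^2\bigr]=\E[K^2]\,\|e\|_2^2$, rather than by \eqref{eq:L-norm}, because the labeled child subtrees are correlated through $\sigma(u)$.
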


\begin{proof}  Fix $\rho\deq \rho_i= \mu_i/d$. Set $h_L=\rho^L\cL^L\mathbf 1$.
Suppose $x$ is a descendant of $u$ with $|x|:=\dist(x,u)=L$. Let $u=x_0,x_1,\dots,x_L=x$ denote the path from $u$ to $x$. We then have
\begin{align*}
 h_L(T)=\rho^L\sum_{x:|x|=L}\prod_{j=0}^{L-1}\Gamma_{x_j}.
\end{align*}
    Write $P_u(x):=\prod_{j=0}^{L-1}\Gamma_{x_j}$. Then we have
    \[
    b_{u}^{(L)}=\sum_{|x|=L}P_u(x)X_x.
    \]
    Let $T_0$ denote the underlying unlabelled tree of $T$.
    Recall that  $\EE[X_x\mid T_0,\sigma(u)]=\rho^LX_u$. Thus, $$\EE[b_{u}^{(L)}\mid T_0,\sigma(u)]=\rho^L\sum_{|x|=L}P_u(x)X_u=h_L(T)X_u.$$ 
    
We now want to decompose $b_u^{(L)}$ into an orthogonal family, as was done in \cite{dong2026bethe}. 
For every descendant $x$ of $u$, define $\xi_x:=\rho^{-1}X_x-X_{p(x)}$, so that $X_x=\rho X_{p(x)}+\rho \xi_x$. Since
    $\EE(\xi_x\mid T_0,\text{ labels above $x$})=0$,
   \begin{align}\label{eq:xi-orthogonal}
       \EE(X_u\xi_x\mid T_0)=0,\qquad \EE(\xi_x\xi_y\mid T_0)=0\text{ for $x\neq y$.}
   \end{align}
    Meanwhile, we have $\EE(X_u^2\mid T_0)=1$ and
    $\EE(\xi_x^2\mid T_0)=\rho^{-2}\cdot 1-2\rho^{-1}\cdot \rho+1=\rho^{-2}-1$.
    Next, observe that for every path $u=x_0,x_1,\dots,x_L=x$, we have 
    \[
    X_x=\rho^L X_u+\sum_{\ell=1}^L \rho^{L-\ell+1}\xi_{x_\ell}.
    \]
  We can then decompose $b_u^{(L)}$ into this basis as
    \begin{align*}
        b_{u}^{(L)}&=\sum_{|x|=L}P_u(x)\left(\rho^L X_u+\sum_{\ell=1}^L \rho^{L-\ell+1}\xi_{x_\ell}\right)\\
        &=\rho^L\sum_{|x|=L}P_u(x)X_u+\sum_{\ell=1}^L\sum_{|y|=\ell} \rho^{L-\ell+1}\xi_y P_u(y)\sum_{|x|=L,y\preceq x}P_y(x)\\
        &=h_L(T)X_u+\sum_{\ell=1}^L\sum_{|y|=\ell}\rho h_{L-\ell}(T_y)P_u(y)\xi_y,
    \end{align*}
where $T_y$ is the subtree of $T$ rooted at $y$, and $y\preceq x$ denotes that $y$ is an ancestor of $x$.
We now define
\begin{align*}
     b_{u}(T)&\deq h(T)X_u+\sum_{\ell=1}^\infty\sum_{|y|=\ell}\rho h(T_y)P_u(y)\xi_y.
\end{align*}
We first show that $b_{u}\in L^2(\widetilde \Omega)$. For every $\ell$, let
\[
Y_\ell:=\sum_{|y|=\ell}\rho h(T_y)P_u(y)\xi_y.
\]
We know from orthogonality \eqref{eq:xi-orthogonal} that
\begin{align*}
    \EE(Y_\ell^2\mid T_0)&=\rho^2\sum_{|y|=\ell} h(T_y)^2P_u(y)^2\EE(\xi_y^2\mid T_0)=(1-\rho^2)\sum_{|y|=\ell} h(T_y)^2P_u(y)^2.
\end{align*}
Therefore, 
\begin{align*}
 \EE(Y_\ell^2)\leq (1-\rho^2)c_\delta^{2\ell}\EE\sum_{|y|=\ell} h(T_y)^2 \leq  (1-\rho^2)c_\delta^{2\ell}d^\ell\|h\|_2^2.
\end{align*}
Since the $Y_\ell$'s are pairwise orthogonal in expectation, it follows that
\[
\EE |b_{u}|^2\leq \|h\|_2^2\left(1+ (1-\rho^2)\sum_{\ell=1}^\infty c_\delta^{2\ell}d^\ell \right)<\infty.
\]

Next, we show that $b^{(L)}\to b$ in $L^2.$ Indeed, we have
\begin{align*}
    b^{(L)}(T)-b(T)&=(h_L(T)-h(T))X_u+\sum_{\ell=1}^L\sum_{|y|=\ell}\rho (h_{L-\ell}(T_y)-h(T_y)) P_u(y)\xi_y\\
    &\qquad -\sum_{\ell=L+1}^\infty \sum_{|y|=\ell}\rho  h(T_y) P_u(y)\xi_y.
\end{align*}
Using orthogonality of $\xi_y$, we get that
\begin{align*}
    \EE|b^{(L)}-b|^2&\leq \|h_{L}-h\|_2^2+(1-\rho^2) \sum_{\ell=1}^L c_\delta^{2\ell} d^\ell\|h_{L-\ell}-h\|_2^2+(1-\rho^2) \sum_{\ell=L+1}^\infty c_\delta^{2\ell} d^\ell\|h\|_2^2\to 0
\end{align*}
as $L\to\infty$.

We are left to show that $b$ satisfies the desired recursion. Let $K=|\ch(u)|$. Indeed, since
\begin{align*}
    &b_u^{(L+1)}=\Gamma_u\sum_{w\in\ch(u)}b_w^{(L)},\\
    &\EE|b_u-b_u^{(L+1)}|^2\to 0,\\
    &\EE|\Gamma_u\sum_{w\in\ch(u)}(b_w^{(L)}-b_w)|^2\leq c_\delta^2\EE(K^2)\EE|b_w^{(L)}-b_w|^2\to 0,
\end{align*}
we know that $b_u =\Gamma_u\sum_{w\in\ch(u)}b_w $ almost surely.
\end{proof}

We extend the definitions of $b_u=b_{u\to p(u)}$ to $b_{u\to v}$ for all $u\sim v$ in $T$.

\begin{dfn}\label{def:b-tree}
    For $u\sim v$ in $T$, define $b_{u\to v}$ by
    \[
    b_{u\to v}=\begin{cases}
        b_u &v=p(u)\\
\Gamma_o^{(v)}
\sum_{\substack{w\in\operatorname{ch}(o)\\w\neq v}}
b_{w\to o} & u=o, o=p(v)\\
\Gamma_u^{(v)}
\left(
b_{p(u)\to u}
+
\sum_{\substack{w\in\operatorname{ch}(u)\\w\neq v}}
b_{w\to u}
\right) & u\neq o, u=p(v)
    \end{cases}.
    \]
\end{dfn}

It is straightforward to check that, for every directed edge
$(u,v)$, we have
\[
b_{u\to v}
=
\Gamma_u^{(v)}
\sum_{\substack{w\sim u\\w\neq v}}b_{w\to u}.
\]
Moreover, for all sufficiently large $L$, each $b_{u\to v}$ can be expressed as a finite linear combination
of the messages $(b_{w\to p(w)})_{w\in T\setminus\{o\}}$ in the same way that $b_{u\to v}^{(L)}$ can be expressed in terms of
$(b^{(L-k)}_{w\to p(w)})_{w\in T\setminus\{o\}}$, with  $k$ independent of $L$. 
Thus, we can deduce the following from
Proposition \ref{prop:bL-converge}.

\begin{cor}\label{cor:bL-converge-all}
    Suppose $G$ is a rooted multitype Galton--Watson tree.
There exist random variables $(b_{u\to v})_{(u,v)\in E}$
such that, for every $(u,v)$,
\[
b_{u\to v}
=
\Gamma_u^{(v)}
\sum_{\substack{w\sim u\\w\neq v}}b_{w\to u}.
\]
Moreover, 
$b_{u\to v}^{(L)}
\longrightarrow b_{u\to v}$ in $L^2$
as $L\to\infty$.
\end{cor}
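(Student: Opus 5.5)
We take the downward messages from Proposition~\ref{prop:bL-converge} as given and define the remaining messages directly by Definition~\ref{def:b-tree}. The recursion $b_{u\to v}=\Gamma_u^{(v)}\sum_{w\sim u,\,w\neq v}b_{w\to u}$ then holds for every directed edge. For edges pointing toward the parent it is \eqref{eq:blimitrecursion}, and for edges pointing away from the root it is the definition. The substantive claim is the $L^2$ convergence $b^{(L)}_{u\to v}\to b_{u\to v}$. This we prove by induction on the depth of the tail $u$, or more precisely on the distance of $v$ from the root along outward edges.

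\textbf{Downward edges.} For $v=p(u)$, the convergence $b_u^{(L)}\to b_u$ in $L^2$ is given by Proposition~\ref{prop:bL-converge}. We need it for every fixed non-root vertex $u$, not only for a child of the root. For this we condition on the path from $o$ to $u$ and the labels along it. By the branching property of the multitype Galton--Watson tree, the subtree $T_u$ given $\sigma(u)$ has the law of $\widetilde\nu$ conditioned on the root type. The convergence in Proposition~\ref{prop:bL-converge} holds in $L^2$ under $\widetilde\nu$, so it also holds conditionally on each of the finitely many root types, since each has positive probability $\pi_a$. To make sense of ``a vertex $u$'', we index vertices by Ulam--Harris words and set messages to $0$ where a vertex is absent. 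Summing over the at most $|{\rm ch}(p(u))|$ relevant siblings uses moments of Poisson variables, which are finite.

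\textbf{Outward edges.} Suppose $u=p(v)$. We use
\[
b^{(L+1)}_{u\to v}-b_{u\to v}
=(\Gamma_u^{(v)}-\Gamma_u^{(v)})\cdot(\cdots)+\Gamma_u^{(v)}\Big(b^{(L)}_{p(u)\to u}-b_{p(u)\to u}+\sum_{w\in\ch(u),w\neq v}\big(b^{(L)}_{w\to u}-b_{w\to u}\big)\Big).
\]
The weights $\Gamma_u^{(v)}$ are the same limiting weights in both expressions, so the first term vanishes. We then bound $\Gamma_u^{(v)}\le c_\delta$ and apply Cauchy--Schwarz to the sum over children. The number of children is $\Poi(d)$-distributed, but it is not independent of the messages. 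We therefore bound the error by
\[
\mathbb E\Big[K\sum_{w}|b^{(L)}_{w\to u}-b_{w\to u}|^2\Big].
\]
Conditioning on $K$ and the types of the children, the child subtrees are independent. Each child's $L^2$ error then depends only on its own type, and it tends to zero uniformly over the finitely many types. The bound becomes $c_\delta^2\,\mathbb E[K^2]\max_a\varepsilon_L(a)\to0$. The term involving $b^{(L)}_{p(u)\to u}$ converges by the induction hypothesis, since $p(u)$ is one step closer to the root. The base case $u=o$ has no parent term and follows in the same way. Finitely many steps then reach any fixed edge, and the shift from $L$ to $L+1$ (more generally, by a fixed $k$) is harmless, because $b^{(L-k)}$ and $b^{(L)}$ have the same limit.

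\textbf{Main obstacle.} The delicate point is the justification of the sums over a random number of children, where the count correlates with the messages through the tree. We handle it as above, by conditioning on the root's offspring and types and using the branching independence together with uniformity over the finite type set. This is where the argument would fail if the offspring distribution lacked a second moment.
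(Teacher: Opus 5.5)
Your proposal is correct and follows essentially the paper's route. The paper likewise defines the outward messages by Definition~\ref{def:b-tree} and notes that $b_{u\to v}$ and $b^{(L)}_{u\to v}$ are the same finite combination of downward messages (at levels $L-k$ with $k$ bounded), so convergence follows from Proposition~\ref{prop:bL-converge}. Your induction on depth is that unrolling made explicit, and your conditioning on child types together with $\EE[K^2]<\infty$ supplies the control of the random-size sums that the paper leaves implicit.
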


\section{Vertex fields and their approximations}
Given our set of messages $b_{u\rightarrow v}$, we can construct the eigenvector on the relevant graphs. We do this first on an arbitrary graph, and  then highlight stronger recursive  properties on trees. 
\subsection{General graphs}

Our test eigenvector is a weighted linear combination of the above messages.
\begin{dfn}
    Let $G$ be any graph with vertex labeling $\sigma:V(G)\to[r]$. For any $u\in V(G)$ and $L,M\geq 0$, define
    \begin{align*}
        V^{(L,M)}(u)
&\deq
\frac{
t\sum_{v\sim u}b_{v\to u}^{(L,M)}
}{
t^2+\deg(u)-1+\delta
-t\sum_{v\sim u}(\Gamma_v^{(u)})^{(M)}
},\\
V^{(L)}(u)
&\deq 
\frac{
t\sum_{v\sim u}b_{v\to u}^{(L)}
}{
t^2+\deg(u)-1+\delta
-t\sum_{v\sim u}\Gamma_v^{(u)}
}.
    \end{align*}

    Moreover, define the ball $B_{k}(G,u)$ to be the subgraph of $G$ induced by the set of vertices of distance at most $k$ from $u\in V(G)$.
\end{dfn}

It is important to mention here that $V^{(L,M)}$ can be decided by a finite neighborhood. 
\begin{lem}\label{lem:local}
Let $G$ be any graph with vertex labeling $\sigma:V(G)\to[r]$.
    For fixed integers $L,M\geq 0$, the values $V^{(L,M)}(u)$ and
$\bigl[(H(t,G)+\delta I)V^{(L,M)}\bigr](u)$
are determined by the ball $B_{M+L+2}(G,u)$.
\end{lem}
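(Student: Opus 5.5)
The plan is a dependency count by induction, tracking for each finite-depth quantity the radius of the ball it depends on. First, for the weights, we show by induction on $M$ that for every directed edge $(u,v)$, $(\Gamma_u^{(v)})^{(M)}$ is determined by $B_M(G,u)$ together with the identity of the edge $(u,v)$. More precisely, it depends only on the vertices within distance $M$ of $u$, their labels, and the edges among vertices at distance at most $M-1$ from $u$ plus the degrees of those vertices. The case $M=0$ is trivial, since the quantity is $0$. For the step, \eqref{eq:gamma-recursion-finite} uses $\deg(u)$, which is read off from $B_1(G,u)$, and $(\Gamma_w^{(u)})^{(M)}$ for $w\sim u$. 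By induction, each of these is determined by $B_M(G,w)\subseteq B_{M+1}(G,u)$. A small subtlety is that the degree of a vertex at distance exactly $k$ from $u$ needs $B_{k+1}$. For this reason we state the induction as "determined by $B_{M}(G,u)$", noting that the innermost nontrivial weight $(\Gamma)^{(1)}$ uses only $\deg(u)$, so the radius indeed grows by one per step.

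Second, for the messages, we show by induction on $L$ that $b_{u\to v}^{(L,M)}$ is determined by $B_{L+M}(G,u)$. At $L=0$ it equals $X_u$, which needs only $\sigma(u)$. For the step, $b_{u\to v}^{(L+1,M)}=(\Gamma_u^{(v)})^{(M)}\sum_{w}b_{w\to u}^{(L,M)}$. The weight factor needs $B_M(G,u)$, and each summand needs $B_{L+M}(G,w)\subseteq B_{L+M+1}(G,u)$.

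Third, we assemble. The numerator of $V^{(L,M)}(u)$ involves $b_{v\to u}^{(L,M)}$ for $v\sim u$, hence $B_{L+M+1}(G,u)$. The denominator involves $\deg(u)$ and $(\Gamma_v^{(u)})^{(M)}$, hence $B_{M+1}(G,u)$. So $V^{(L,M)}(u)$ is determined by $B_{L+M+1}(G,u)$. Next,
\[
\bigl[(H(t,G)+\delta I)V^{(L,M)}\bigr](u)=(t^2-1+\deg(u)+\delta)V^{(L,M)}(u)-t\sum_{w\sim u}V^{(L,M)}(w).
\]
Each $V^{(L,M)}(w)$ with $w\sim u$ is determined by $B_{L+M+1}(G,w)\subseteq B_{L+M+2}(G,u)$. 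This gives the claimed radius $M+L+2$.

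The main obstacle is bookkeeping rather than mathematics. One must be precise about what "determined by the ball" means, since degrees of boundary vertices are not visible in the induced ball. I would handle this by proving the slightly sharper inductive statements with radii $M$, $L+M$, $L+M+1$ and checking that every degree used belongs to a vertex strictly inside the final ball $B_{M+L+2}(G,u)$. The one-unit slack at each stage is what makes this work. The labels of vertices at the boundary of that ball are likewise covered, since $\sigma$ is part of the labeled ball.
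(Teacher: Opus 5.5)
Your proposal is correct and follows the paper's proof: you use the same induction, with radius $M$ for $(\Gamma_u^{(v)})^{(M)}$, radius $L+M$ for $b_{u\to v}^{(L,M)}$, radius $L+M+1$ for $V^{(L,M)}(u)$, and one more step for $\bigl[(H(t,G)+\delta I)V^{(L,M)}\bigr](u)$. Your point that the degree of a vertex at distance $k$ is only visible in $B_{k+1}$ just spells out bookkeeping that the paper leaves as ``easy to check by induction.''
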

\begin{proof}
    It is easy to check by induction that every  $(\Gamma_u^{(v)})^{(M)}$ is determined by
$B_M(G,u)$. Since
\[
b_{u\to v}^{(0,M)}=X(u),
\qquad
b_{u\to v}^{(L+1,M)}
=
(\Gamma_u^{(v)})^{(M)}
\sum_{\substack{w\sim u\\w\neq v}}b_{w\to u}^{(L,M)},
\]
induction on $L$ then shows that $b_{u\to v}^{(L,M)}$
is determined by $B_{L+M}(G,u)$.
This gives that $V^{(L,M)}(u)$ is determined by $B_{L+M+1}(G,u)$.
Therefore, 
\[
\bigl[(H(t,G)+\delta I)V^{(L,M)}\bigr](u)
=
\bigl(t^2+\deg(u)-1+\delta\bigr)V^{(L,M)}(u)
-
t\sum_{v\sim u}V^{(L,M)}(v)
\]
is determined by $B_{L+M+2}(G,u)$.
\end{proof}

At this point, we write $c\deq c_\delta$ for notational convenience. We now give a generic rough bound on these weights.
\begin{lem}Let $G$ be any graph with vertex labeling $\sigma:V(G)\to[r]$.
    For every $L,M\geq 0$, $|b_{u\to v}^{(L,M)}|\leq \|\varphi_i\|_\infty c^{L}|B_{L}(G,u)|^{L}$.
\end{lem}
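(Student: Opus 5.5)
Throughout, $u$ and $v$ are fixed and the bound is proved by induction on $L$, separately for each fixed $M$. The idea is to unroll the recursion of Definition~\ref{def:b} into a sum over non-backtracking walks from $u$ of length $L$. The quantity $b_{u\to v}^{(L,M)}$ is a sum over walks $u=x_0,x_1,\dots,x_L$ in which $x_1\neq v$ and $x_{j+1}\neq x_{j-1}$. Each term equals $\prod_{j=0}^{L-1}(\Gamma_{x_j}^{(x_{j-1})})^{(M)}\,X_{x_L}$, with the convention $x_{-1}=v$.

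\textbf{Step 1 (bounding the weights).} By Lemma~\ref{lem:gamma-increasing-bounded}, every factor satisfies $0\le(\Gamma_{x_j}^{(x_{j-1})})^{(M)}\le c$. Also $|X_{x_L}|\le\|\varphi_i\|_\infty$. Hence each walk contributes at most $\|\varphi_i\|_\infty c^L$ in absolute value.

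\textbf{Step 2 (counting walks).} Every walk of length $L$ from $u$ stays inside $B_L(G,u)$. It is determined by the choice of $x_1,\dots,x_L$, and each $x_j$ is one of at most $|B_L(G,u)|$ vertices. The number of such walks is therefore at most $|B_L(G,u)|^L$. Multiplying the two bounds gives the claim.

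\textbf{Formal induction.} To avoid writing out the walk expansion, I would instead prove the stronger statement $|b_{u\to v}^{(L,M)}|\le\|\varphi_i\|_\infty c^L N_L(u)$ by induction on $L$. Here $N_L(u)$ is the number of non-backtracking walks of length $L$ from $u$ avoiding $v$ as the first step, and it is bounded by $|B_L(G,u)|^L$.
\begin{itemize}
\item The base case $L=0$ is immediate, since $b_{u\to v}^{(0,M)}=X_u$.
\item For the inductive step, apply the recursion of Definition~\ref{def:b}, bound the prefactor by $c$ using Lemma~\ref{lem:gamma-increasing-bounded}, and apply the inductive hypothesis at each neighbor $w$. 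The walk counts from the neighbors sum exactly to $N_{L+1}(u)$.
\end{itemize}

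The only point needing care is that the inductive hypothesis at a neighbor $w$ involves balls centered at $w$ rather than at $u$. Stating the induction in terms of walk counts avoids this, because every walk from $w$ of length $L$ that follows the step $u\to w$ lies inside $B_{L+1}(G,u)$. I do not expect any real obstacle.
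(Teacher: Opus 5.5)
Your proof is correct and is essentially the paper's argument: induction on $L$ through the recursion of Definition~\ref{def:b}, with each weight bounded by $c$ via Lemma~\ref{lem:gamma-increasing-bounded}. The only difference is bookkeeping. The paper keeps the ball-size bound itself as the inductive hypothesis and handles the recentering directly using $B_L(G,w)\subseteq B_{L+1}(G,u)$ and $\deg(u)\le|B_{L+1}(G,u)|$, so your strengthening to non-backtracking walk counts is valid, and slightly sharper, but not needed.
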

\begin{proof}Write $\beta:=\|\varphi_i\|_\infty$.  When $L=0$, we indeed have $|b^{(0,M)}_{u\to v}|\leq \beta$. Inductively, we obtain
\begin{align*}
|b_{u\to v}^{(L+1,M)}|
&\leq
c\sum_{\substack{w\sim u\\w\neq v}}
\beta c^L |B_L(G,w)|^L\leq
\beta c^{L+1}\deg(u)
|B_{L+1}(G,u)|^L\leq
\beta c^{L+1}|B_{L+1}(G,u)|^{L+1}.
\end{align*}
\end{proof}

\begin{lem}\label{lem:polynomial-bounded}
Let $G$ be any graph with vertex labeling $\sigma:V(G)\to[r]$.
    For each
fixed $L$ there is $C_L<\infty$, independent of $M$, such that
\[
|V^{(L,M)}(u)|
+
\left|
\bigl[(H(t,G)+\delta I)V^{(L,M)}\bigr](u)
\right|
\leq C_L|B_{L+2}(G,u)|^{L+2}.
\]
\end{lem}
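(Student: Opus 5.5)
The plan is to combine the previous lemma's bound on the messages with two elementary facts about the denominator and the ball sizes. Write $\beta:=\|\varphi_i\|_\infty$ and $N_k(u):=|B_k(G,u)|\ge 1$.

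\textbf{Step 1: the denominator.} By Lemma~\ref{lem:gamma-increasing-bounded}, every $(\Gamma_v^{(u)})^{(M)}$ lies in $[0,c]$ with $c<1/t$. Hence
\[
t^2+\deg(u)-1+\delta-t\sum_{v\sim u}(\Gamma_v^{(u)})^{(M)}
\ge t^2-1+\delta+\deg(u)(1-tc)\ge t^2-1+\delta,
\]
and this is a positive constant depending only on $t$ and $\delta$, since $t>1$.

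\textbf{Step 2: bound on $V^{(L,M)}(u)$.} The numerator satisfies
\[
t\sum_{v\sim u}|b^{(L,M)}_{v\to u}|\le t\,\deg(u)\,\beta c^L\max_{v\sim u}N_L(v)^L .
\]
For every $v\sim u$ we have $B_L(G,v)\subseteq B_{L+1}(G,u)$, and $\deg(u)\le N_{L+1}(u)$. So
\[
|V^{(L,M)}(u)|\le \frac{t\beta c^L}{t^2-1+\delta}\,N_{L+1}(u)^{L+1},
\]
uniformly in $M$.

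\textbf{Step 3: the residual.} Write
\[
\bigl[(H+\delta I)V^{(L,M)}\bigr](u)=(t^2+\deg(u)-1+\delta)V^{(L,M)}(u)-t\sum_{v\sim u}V^{(L,M)}(v).
\]
\begin{itemize}
\item For the first term, multiply the Step 2 bound by $t^2+\delta+\deg(u)\le (t^2+\delta+1)N_{L+1}(u)$.
\item For the second term, apply Step 2 at each neighbor $v$ and use $B_{L+1}(G,v)\subseteq B_{L+2}(G,u)$. This gives at most $t\deg(u)\,C\,N_{L+2}(u)^{L+1}$.
\end{itemize}
Both contributions are at most a constant (depending on $L,t,\delta,\beta$ but not on $M$) times $N_{L+2}(u)^{L+2}$. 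Here we use monotonicity $N_{L+1}\le N_{L+2}$ and absorb the single extra factor of $\deg(u)\le N_{L+2}(u)$ into one more power. Adding the bounds from Steps 2 and 3 then gives the claim with a suitable $C_L$.

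\textbf{Main obstacle.} There is essentially none; the only care needed is exponent bookkeeping. The first residual term carries a factor $\deg(u)\cdot N_{L+1}^{L+1}$, and this must be dominated by $N_{L+2}^{L+2}$. That holds because $\deg(u)\le N_1(u)\le N_{L+2}(u)$, so the total exponent stays at most $L+2$.
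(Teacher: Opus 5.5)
Your proof is correct and takes essentially the same route as the paper. You bound the denominator below by $\alpha=t^2-1+\delta$ using $(\Gamma_v^{(u)})^{(M)}\le c<1/t$, apply the message bound $|b^{(L,M)}_{v\to u}|\le\beta c^L|B_L(G,v)|^L$ with ball inclusions to get $|V^{(L,M)}(u)|\le \frac{t\beta c^L}{\alpha}|B_{L+1}(G,u)|^{L+1}$, and then control $(1+\alpha+\deg(u))|V^{(L,M)}(u)|+t\sum_{v\sim u}|V^{(L,M)}(v)|$ by absorbing the degree factor into one extra power of $|B_{L+2}(G,u)|$; the differences from the paper are only in how the constants are tracked.
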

\begin{proof}Let $\alpha:=t^2-1+\delta$ and recall that $\beta=\|\varphi_i\|_\infty$.
    By the previous observation, we have
\begin{align*}
|V^{(L,M)}(u)|
&\leq \frac{t\sum_{v\sim u}|b_{v\to u}^{(L,M)}|}
{\alpha}\leq 
\frac{t \beta}{\alpha}
\sum_{v\sim u}  c^L|B_L(G,v)|^L\\
&\leq
\frac{t\beta c^L}{\alpha}\deg(u)
|B_{L+1}(G,u)|^L \leq
\frac{t\beta c^L}{\alpha}|B_{L+1}(G,u)|^{L+1}.
\end{align*}
Using
\[
\bigl[(H(t,G)+\delta I)V^{(L,M)}\bigr](u)
=
(\alpha+\deg(u))V^{(L,M)}(u)
-t\sum_{v\sim u}V^{(L,M)}(v),
\]
we conclude that
\begin{align*}
|V^{(L,M)}(u)|
+
\left|
[(H(t,G)+\delta I)V^{(L,M)}](u)
\right|& \leq
(1+\alpha+\deg(u))|V^{(L,M)}(u)|
+t\sum_{v\sim u}|V^{(L,M)}(v)|\\
& \leq
\frac{t\beta c^L}{\alpha} (1+\alpha+(1+t)\deg(u) )
|B_{L+2}(G,u)|^{L+1}\\
& \leq
\frac{t\beta c^L}{\alpha}(\alpha+2+t)|B_{L+2}(G,u)|^{L+2}.
\end{align*}
\end{proof}

\subsection{Trees}
We now examine this candidate eigenvector on the infinite tree. 
\begin{dfn}\label{def:V}
    On the Galton--Watson tree $T$, recall $b_{v\to u}$ defined in
Definition \ref{def:b-tree}. For every vertex $u$, define the vector $V$ such that
\[
V(u)
\deq 
\frac{
t \sum_{v\sim u}b_{v\to u}
}{
t^2+\deg(u)-1+\delta
-
t \sum_{v\sim u}\Gamma_v^{(u)}
}.
\]
\end{dfn}

\begin{lemma}\label{lem:vertex-field-convergence}
On the labeled Galton--Watson tree $(T,o)$, we have
$\EE|V(o)|^2<\infty$ and
\[
\EE\left|V^{(L)}(o)-V(o)\right|^2
\longrightarrow 0
\qquad\text{as }L\to\infty.
\]
\end{lemma}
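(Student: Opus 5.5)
At the root the denominators of $V^{(L)}(o)$ and $V(o)$ coincide: both equal $D_o:=t^2+\deg(o)-1+\delta-t\sum_{v\sim o}\Gamma_v^{(o)}$. Since every $v\sim o$ is a child, $\Gamma_v^{(o)}=\Gamma_v\le c_\delta<1/t$. Hence
\[
D_o\ge t^2-1+\delta+\deg(o)\bigl(1-tc_\delta\bigr)\ge t^2-1+\delta=:\alpha>0 .
\]
It follows that
\[
|V(o)|\le \frac{t}{\alpha}\sum_{v\in\ch(o)}|b_v|,\qquad |V^{(L)}(o)-V(o)|\le \frac{t}{\alpha}\sum_{v\in\ch(o)}|b_v^{(L)}-b_v| .
\]
Here $b_{v\to o}=b_v$ and $b^{(L)}_{v\to o}=b^{(L)}_v$, both being messages toward the parent, as in Definitions~\ref{def:b} and~\ref{def:b-tree}. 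So both claims reduce to second-moment bounds on sums of the child messages.

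By Cauchy--Schwarz, $\bigl(\sum_{v\in\ch(o)}|Z_v|\bigr)^2\le K\sum_{v\in\ch(o)}|Z_v|^2$ with $K=\deg(o)\sim\Poi(d)$. We need $\EE\bigl[K\sum_{v}|Z_v|^2\bigr]$ for $Z_v=b_v$ or $Z_v=b_v^{(L)}-b_v$.

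Condition on $K$ and on the root label $\sigma(o)$. The children's labels are then independent, and the subtrees rooted at the children are independent multitype Galton--Watson trees. A child of type $j$ generates a labeled subtree with law $\widetilde\nu$ conditioned on root type $j$. Since $Z_v$ depends only on the labeled subtree $T_v$, each $\EE[|Z_v|^2\mid \sigma(v)=j]$ equals $\EE_{\widetilde\nu}[|Z_o|^2\mid\sigma(o)=j]$. This is at most $\pi_{\min}^{-1}\EE_{\widetilde\nu}|Z_o|^2$.

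The one point to handle carefully is that $K$ and the children's types are correlated with $\sigma(o)$. Because of the common expected degree assumption \eqref{eq:const-deg}, $K$ is $\Poi(d)$ for every root type. The number of children of type $j$ is $\Poi(Q_{\sigma(o)j})$. Applying Poisson thinning gives
\[
\EE\Bigl[K\sum_{v\in\ch(o)}|Z_v|^2\Bigr]\le \EE[K^2]\,\pi_{\min}^{-1}\,\EE_{\widetilde\nu}|Z_o|^2=(d+d^2)\pi_{\min}^{-1}\EE_{\widetilde\nu}|Z_o|^2 .
\]
Alternatively, the bound $\EE(K^2)\EE|b^{(L)}_w-b_w|^2$ used at the end of Proposition~\ref{prop:bL-converge} gives the same thing directly.

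Proposition~\ref{prop:bL-converge} supplies $b_o\in L^2$ and $\EE|b_o^{(L)}-b_o|^2\to0$. Plugging these in gives $\EE|V(o)|^2<\infty$ and $\EE|V^{(L)}(o)-V(o)|^2\to0$.

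The main (mild) obstacle is exactly this conditioning step, namely justifying that a child's message has the same law as a root message, up to the type bias. I would handle it as above via the type-conditioned bound with the factor $\pi_{\min}^{-1}$. That costs only a constant and avoids any size-biasing issue.
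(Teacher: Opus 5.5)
Your proposal is correct and follows the paper's proof. It uses the same denominator bound by $\alpha=t^2-1+\delta$, the same Cauchy--Schwarz step with $K=\deg(o)$, and the same reduction to Proposition~\ref{prop:bL-converge}, giving the factor $\EE(K^2)$.

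The only difference is the conditioning step. The paper uses directly that, given $K$, each child's labeled subtree has law $\widetilde\nu$. This holds because the child's type is exactly $\pi$-distributed: $\sum_i\pi_iQ_{ij}=d\pi_j$ by symmetry of $P$ and \eqref{eq:const-deg}. So your $\pi_{\min}^{-1}$ loss is unnecessary, though harmless.
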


\begin{proof}
We know from
the definitions of $V^{(L)}(o)$ and $V(o)$ that
\[
|V^{(L)}(o)-V(o)|
\leq
\frac{t}{\alpha}
\sum_{w\sim o}
|b_{w\to o}^{(L)}-b_{w\to o}|,\qquad |V(o)|
\leq
\frac{t}{\alpha}
\sum_{w\sim o}
|b_{w\to o}|.
\]
By Cauchy--Schwarz,
\[
\left|V^{(L)}(o)-V(o)\right|^2
\leq
\frac{t^2}{\alpha^2}
\deg(o)\sum_{w\sim o}
\left|b_{w\to o}^{(L)}-b_{w\to o}\right|^2, \qquad |V(o)|^2
\leq
\frac{t^2}{\alpha^2}
\deg(o)\sum_{w\sim o}
|b_{w\to o}|^2.
\]
Observe that conditional on $K=\deg(o)$, each $(b_{w\to o}^{(L)},b_{w\to o})$ has the same marginal distribution as $(b^{(L)},b)$, independent of the value of $K$. 
By Proposition \ref{prop:bL-converge}, we have $\EE \left|b ^{(L)}-b\right|^2\to 0$ and $\EE |b|^2<\infty$.  Since $K\sim\text{Poi}(d)$,  we get that $$\EE\left|V^{(L)}(o)-V(o)\right|^2\leq \frac{t^2}{\alpha^2}\EE(K^2)\EE \left|b ^{(L)}-b\right|^2 \to 0, \qquad \EE|V(o)|^2\leq \frac{t^2}{\alpha^2}
\EE(K^2)\EE \left|b\right|^2<\infty.$$
\end{proof}

We show that the vector $V$ satisfies the eigenvector equation on the infinite tree. 
\begin{prop}\label{prop:eigvecexact}
  We have
   \[(H(t,T)V)(u)=-\delta V(u).\]
\end{prop}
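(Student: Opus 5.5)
The plan is to verify the eigenvector equation vertex by vertex using only two facts: the message recursion $b_{u\to v}=\Gamma_u^{(v)}\sum_{w\sim u,\,w\ne v}b_{w\to u}$ from \Cref{cor:bL-converge-all}, and the Schur-complement recursion \eqref{eq:gamma-recursion} for $\Gamma_u^{(v)}$. Since $T$ has countably many directed edges, both identities hold simultaneously at every edge almost surely, so the argument below is deterministic on a full-measure event. Write $\alpha\deq t^2-1+\delta$. For each vertex $u$ set
\[
S_u\deq \sum_{v\sim u}b_{v\to u},\qquad D_u\deq \alpha+\deg(u)-t\sum_{v\sim u}\Gamma_v^{(u)},
\]
so that $V(u)=tS_u/D_u$. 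The target identity $(H(t,T)V)(u)=-\delta V(u)$ is equivalent to
\[
(\alpha+\deg(u))V(u)=t\sum_{v\sim u}V(v).
\]

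The key step is a one-edge identity: for every edge $u\sim v$,
\begin{equation*}
V(v)=b_{v\to u}+\Gamma_v^{(u)}V(u).
\end{equation*}
To prove it, I will isolate the contribution of $u$ in $S_v$ and $D_v$. By the message recursion, $\sum_{w\sim v,\,w\neq u}b_{w\to v}=b_{v\to u}/\Gamma_v^{(u)}$. By \eqref{eq:gamma-recursion},
\[
\alpha+\deg(v)-t\sum_{w\sim v,\,w\ne u}\Gamma_w^{(v)}=t/\Gamma_v^{(u)}.
\]
Hence
\[
S_v=b_{u\to v}+\frac{b_{v\to u}}{\Gamma_v^{(u)}},\qquad D_v=\frac{t}{\Gamma_v^{(u)}}-t\Gamma_u^{(v)},
\]
which gives
\[
V(v)=\frac{b_{v\to u}+\Gamma_v^{(u)}b_{u\to v}}{1-\Gamma_v^{(u)}\Gamma_u^{(v)}}.
\]
The same computation with the roles of $u$ and $v$ exchanged gives
\[
V(u)=\frac{b_{u\to v}+\Gamma_u^{(v)}b_{v\to u}}{1-\Gamma_v^{(u)}\Gamma_u^{(v)}}.
\]
Subtracting $\Gamma_v^{(u)}V(u)$ from $V(v)$, the $b_{u\to v}$ terms cancel and what remains is exactly $b_{v\to u}$. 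All divisions are legitimate: by \Cref{lem:gamma-increasing-bounded} and \Cref{def:gamma-limit}, $0<\Gamma\le c_\delta<1/t<1$, so $1-\Gamma_v^{(u)}\Gamma_u^{(v)}>0$. The same bound shows $D_u\ge \alpha>0$.

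Summing the one-edge identity over $v\sim u$ and multiplying by $t$ gives
\[
t\sum_{v\sim u}V(v)=tS_u+t\sum_{v\sim u}\Gamma_v^{(u)}V(u)=D_uV(u)+t\sum_{v\sim u}\Gamma_v^{(u)}V(u)=(\alpha+\deg(u))V(u),
\]
which is the claimed equation.

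The algebra itself is routine. The step needing care is the bookkeeping at the root and at edges directed away from the root. There I will invoke \Cref{def:b-tree} and the statement in \Cref{cor:bL-converge-all} that the recursion holds for every directed edge, including those leaving $o$. I will also note that the formula for $\Gamma_u^{(v)}$ with $v$ a child of $u$ is the same Schur complement, because \Cref{def:gamma-limit} was stated for arbitrary directed edges of a general graph.
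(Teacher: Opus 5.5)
Your proposal is correct and follows essentially the same route as the paper: the one-edge identity $b_{v\to u}=V(v)-\Gamma_v^{(u)}V(u)$ is obtained from the symmetric two-message formula $V(u)=\bigl(b_{u\to v}+\Gamma_u^{(v)}b_{v\to u}\bigr)/\bigl(1-\Gamma_u^{(v)}\Gamma_v^{(u)}\bigr)$ via the Schur recursion, and is then summed over neighbors to give the eigenvector equation. Your remarks that the message recursion holds simultaneously on all countably many directed edges almost surely, and that the denominators $1-\Gamma_u^{(v)}\Gamma_v^{(u)}$ and $D_u$ stay positive, supply details the paper leaves implicit.
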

\begin{proof}
 We first assume that for each $u\sim v$,
    \be\label{eq:Vtob}
b_{u\rightarrow v}=    V(u)-\Gamma_{u}^{(v)}V(v).
    \ee
If this is indeed the case, then we can write Definition \ref{def:V} as
\[
V(u)=\frac{t\sum_{w\sim u} (V(w)-\Gamma_w^{(u)}V(u))}{t^2-1+\delta+\deg(u)-t\sum_{w\sim u}\Gamma_w^{(u)}}.
\]
Cross-multiplying gives the relation
\[
(t^2-1+\delta+\deg(u))V(u)=t\sum_{w\sim u}V(w),
\]
giving $H(t,T)V=-\delta V$.

We are left to show \eqref{eq:Vtob}.  To do this,  by \eqref{eq:gamma-recursion}, we can rewrite Definition \ref{def:V} as
    \be\label{eq:Vufirst}
    V(u)=\frac{t\sum_{w\sim u}b_{w\to u}}{t((\Gamma_{u}^{(v)})^{-1}-\Gamma_v^{(u)})}=\frac{\sum_{w\sim u}b_{w\to u}}{(\Gamma_{u}^{(v)})^{-1}-\Gamma_v^{(u)}}.
    \ee
    We know from Corollary \ref{cor:bL-converge-all} that $b_{u\to v}=\Gamma_u^{(v)}\sum_{\substack{w\sim u\\w\neq v}}b_{w\to u}$, which gives  
    \begin{align}
    \begin{split}
    \label{eq:Vu-decomp}
        V(u)&=\frac{(\Gamma_{u}^{(v)})^{-1}b_{u\rightarrow v}+b_{v\rightarrow u}}{(\Gamma_{u}^{(v)})^{-1}-\Gamma_v^{(u)}}=\frac{b_{u\rightarrow v}+\Gamma_{u}^{(v)}b_{v\rightarrow u}}{1-\Gamma_{u}^{(v)}\Gamma_v^{(u)}}.
    \end{split}
    \end{align}
      We then have
    \begin{align*}
    V(u)-\Gamma_u^{(v)}V(v)&=\frac{b_{u\rightarrow v}+\Gamma_{u}^{(v)}b_{v\rightarrow u}}{1-\Gamma_{u}^{(v)}\Gamma_v^{(u)}}-\Gamma_u^{(v)}\cdot \frac{b_{v\rightarrow u}+\Gamma_{v}^{(u)}b_{u\rightarrow v}}{1-\Gamma_{u}^{(v)}\Gamma_v^{(u)}}
    =b_{u\rightarrow v},
    \end{align*}
    giving \eqref{eq:Vtob}.
\end{proof}

We now show that this equality is satisfied by the limit of $V^{(L)}$ as well.

\begin{lemma}\label{lemma:residual}

We have $\EE|[(H(t,T)+\delta I) V^{(L)}](o)|^2\to 0$ as $L\to \infty$.
That is, we have
\begin{equation}\label{eq:residual-limit}
 \E\left|(t^2-1+\delta+\deg(o))V^{(L)}(o)-t\sum_{w\sim o}V^{(L)}(w)\right|^2
 \longrightarrow0 \qquad\text{as }L\to\infty.
\end{equation}
\end{lemma}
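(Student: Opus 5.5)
Since $(H(t,T)+\delta I)V=0$ by Proposition~\ref{prop:eigvecexact}, the residual at the root is
\[
[(H(t,T)+\delta I)V^{(L)}](o)=(t^2-1+\delta+\deg(o))\bigl(V^{(L)}(o)-V(o)\bigr)-t\sum_{w\sim o}\bigl(V^{(L)}(w)-V(w)\bigr).
\]
So it suffices to show two things:
\[
\EE\bigl[(1+\deg(o))^2|V^{(L)}(o)-V(o)|^2\bigr]\to0
\qquad\text{and}\qquad
\EE\Bigl|\sum_{w\sim o}(V^{(L)}(w)-V(w))\Bigr|^2\to0.
\]
Then Cauchy--Schwarz, $(a+b)^2\le 2a^2+2b^2$, and $|\sum_{w\sim o}x_w|^2\le\deg(o)\sum_{w\sim o}|x_w|^2$ reduce everything to degree-weighted second moments of $V^{(L)}-V$ at the root and at its neighbors.

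For the root term, I will repeat the argument of Lemma~\ref{lem:vertex-field-convergence}, keeping the extra degree weight. We have
\[
|V^{(L)}(o)-V(o)|\le \tfrac{t}{\alpha}\sum_{w\sim o}|b^{(L)}_{w\to o}-b_{w\to o}|.
\]
Conditional on $K=\deg(o)$, the messages $b_{w\to o}$ for $w\in\ch(o)$ are i.i.d.\ copies of $(b^{(L)},b)$. Hence the weighted expectation is bounded by $\tfrac{t^2}{\alpha^2}\EE[(1+K)^2K^2]\,\EE|b^{(L)}-b|^2$, which tends to $0$ since Poisson moments are finite and Proposition~\ref{prop:bL-converge} applies.

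For a neighbor $w$ of $o$, the difference $V^{(L)}(w)-V(w)$ is controlled by the two kinds of incoming messages at $w$. The first kind is $b_{x\to w}$ for children $x$ of $w$, which are again i.i.d.\ copies of $(b^{(L)},b)$ independent of $\deg(w)$ in the size-biased sense. The second kind is the downward message $b_{o\to w}$. By Definition~\ref{def:b-tree}, $b_{o\to w}$ and $b^{(L)}_{o\to w}$ equal $\Gamma_o^{(w)}\sum_{x\in\ch(o),x\ne w}b_{x\to o}$ and its depth-$L$ analogue, a finite combination with coefficients bounded by $c$. This gives
\[
|V^{(L)}(w)-V(w)|\le \tfrac{t}{\alpha}\Bigl(\sum_{x\in\ch(w)}|b^{(L-1)}_x-b_x|\Bigr)+\tfrac{tc}{\alpha}\sum_{x\in\ch(o)}|b^{(L-1)}_x-b_x|,
\]
with a small index-shift bookkeeping to match $b^{(L)}_{o\to w}$ with level $L-1$ messages at the children of $o$.

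Summing over $w\sim o$ and taking expectations, every term is a polynomial in the degrees ($\deg(o)$, $\deg(w)$) times squared differences $|b^{(L-1)}-b|^2$ of independent subtree messages. By the branching property, the expectation factorizes into a finite Poisson moment times $\EE|b^{(L-1)}-b|^2\to0$.

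The main obstacle is this factorization. I must check that the subtree message differences are independent of the degree multipliers and identically distributed as $(b^{(L)},b)$ for the root of a fresh Galton--Watson tree. This holds because for $x\in\ch(w)$, $T_x$ is independent of the labels and degrees above it given $\sigma(x)$, and $\sigma(x)$ has marginal law $\pi$ by stationarity of $\pi$ under the broadcast (since $Q$ has constant row sums and $\pi Q = d\pi$ by symmetry of $P$). If conditioning on types causes trouble, I would instead bound $\EE[|b^{(L)}_x-b_x|^2\mid \sigma(x)]$ uniformly over types, using that $\pi_{\min}>0$ makes the conditional norms comparable to the unconditional one.
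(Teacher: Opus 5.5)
Your proof is correct, but it takes a different route from the paper's. You use linearity and the exact equation of Proposition~\ref{prop:eigvecexact} to rewrite the residual of $V^{(L)}$ as the residual of $V^{(L)}-V$. You then control $V^{(L)}-V$ in degree-weighted $L^2$ at the root and at its children. This brings in the downward messages $b_{o\to w}$ of Definition~\ref{def:b-tree}, the grandchildren's messages, and a two-generation factorization.

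The paper never uses $V$ in this proof. It sets $e^{(L)}_{u\to v}=b^{(L)}_{u\to v}-b^{(L+1)}_{u\to v}$ and reruns the algebra of Proposition~\ref{prop:eigvecexact} with these one-step defects, obtaining the exact identity
\[
\bigl[(H(t,T)+\delta I)V^{(L)}\bigr](o)=t\sum_{w\sim o}\frac{e^{(L)}_{w\to o}-\Gamma_w^{(o)}e^{(L)}_{o\to w}}{1-\Gamma_w^{(o)}\Gamma_o^{(w)}},
\qquad
e^{(L)}_{o\to w}=\Gamma_o^{(w)}\sum_{w'\sim o,\,w'\neq w}e^{(L-1)}_{w'\to o}.
\]
So the paper needs only that the root messages $b^{(L)}$ are Cauchy in $L^2$, together with moments of $\deg(o)$.

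What each buys: the paper's identity is more local and self-contained. It never bounds $V^{(L)}(w)$ at the neighbors, and it recovers Proposition~\ref{prop:eigvecexact} in the limit. Your argument is more modular: it treats the residual as a degree-weighted continuous functional of the field and reuses the estimate of Lemma~\ref{lem:vertex-field-convergence} one generation deeper.

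Two small points. First, for $x\in\ch(w)$ the incoming message in $V^{(L)}(w)$ is $b^{(L)}_x$, not $b^{(L-1)}_x$; only the downward message from $o$ drops a level. This does not affect the conclusion. Second, your factorization worry resolves exactly as you suggest. The labels form a stationary broadcast independent of the unlabeled $\Poi(d)$ tree, so each labeled subtree $T_x$ has law $\widetilde\nu$ and is independent of the unlabeled structure (hence of all degree multipliers) outside it. The messages at distinct children are identically distributed, though not independent since they share ancestor labels. That is all your Cauchy--Schwarz step requires.
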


\begin{proof}
Set $e_{u\to v}^{(L)}\deq b_{u\to v}^{(L)}-b_{u\to v}^{(L+1)}$. We can expand Definition \ref{def:b}, then use the fact that \eqref{eq:Vufirst} is also true between $V^{(L)}$ and $b^{(L)}$, giving
\begin{align*}
    b^{(L)}_{u\to v}&= \Gamma_u^{(v)}\sum_{w\sim u, w\neq v}b_{w\rightarrow u}^{(L)}+e^{(L)}_{u\to v}=(1-\Gamma_u^{(v)}\Gamma_v^{(u)})V^{(L)}(u)-\Gamma_u^{(v)}b_{v\to u}^{(L)}+e^{(L)}_{u\to v}.
\end{align*}
Interchanging $u$ and $v$ gives $b_{v\to u}^{(L)}
=
\bigl(1-\Gamma_u^{(v)}\Gamma_v^{(u)}\bigr)V^{(L)}(v)
-\Gamma_v^{(u)}b_{u\to v}^{(L)}
+e_{v\to u}^{(L)}$.
Substituting this 
into the preceding equation, we obtain
\begin{align*}
b_{u\to v}^{(L)}
&=
\bigl(1-\Gamma_u^{(v)}\Gamma_v^{(u)}\bigr)V^{(L)}(u)-\Gamma_u^{(v)}
\left(
\bigl(1-\Gamma_u^{(v)}\Gamma_v^{(u)}\bigr)V^{(L)}(v)
-\Gamma_v^{(u)}b_{u\to v}^{(L)}
+e_{v\to u}^{(L)}
\right)
+e_{u\to v}^{(L)}\\
&=
\bigl(1-\Gamma_u^{(v)}\Gamma_v^{(u)}\bigr)
\left(V^{(L)}(u)-\Gamma_u^{(v)}V^{(L)}(v)\right)+\Gamma_u^{(v)}\Gamma_v^{(u)}b_{u\to v}^{(L)}
+e_{u\to v}^{(L)}
-\Gamma_u^{(v)}e_{v\to u}^{(L)},\\
&=
V^{(L)}(u)-\Gamma_u^{(v)}V^{(L)}(v)
+
\frac{
e_{u\to v}^{(L)}-\Gamma_u^{(v)}e_{v\to u}^{(L)}
}{
1-\Gamma_u^{(v)}\Gamma_v^{(u)}
}.
\end{align*}
Substituting this into Definition \ref{def:V} therefore yields
\begin{align*}
&\phantom{{}={}}\left(t^2-1+\delta+\deg(o)-t\sum_{w\sim o}\Gamma_w^{(o)}\right)V^{(L)}(o)\\
&=t\sum_{w\sim o}b_{w\to o}^{(L)}\\
&=
t\sum_{w\sim o}V^{(L)}(w)
-t\left(\sum_{w\sim o}\Gamma_w^{(o)}\right)V^{(L)}(o)+
t\sum_{w\sim o}
\frac{
e_{w\to o}^{(L)}-\Gamma_w^{(o)}e_{o\to w}^{(L)}
}{
1-\Gamma_w^{(o)}\Gamma_o^{(w)}
}. 
\end{align*}
Since $|\Gamma_u^{(v)}|\leq c<1$, Cauchy--Schwarz gives
\be\label{eq:ebasederror}
\E\left|
(t^2-1+\delta+\deg(o))V^{(L)}(o)-t\sum_{w\sim o}V^{(L)}(w)
\right|^2
\leq
C\,\E\left[
\deg(o)\sum_{w\sim o}
\left(
|e^{(L)}_{w\to o}|^2+
|e^{(L)}_{o\to w}|^2
\right)
\right]
\ee
where  $C$ depends only on $t,c$.
Since the incoming defects satisfy
$\EE|e_{w\to o}^{(L)}|^2=\EE|e^{(L)}|^2\to 0$
as $L\to\infty$, the outgoing defects satisfy
\[
e_{o\to w}^{(L)}
=
\Gamma_o^{(w)}
\sum_{\substack{w'\sim o\\w'\neq w}}
e_{w'\to o}^{(L-1)},
\]
$|\Gamma_o^{(w)}|\leq c$, and   $\deg(o)\sim\text{Poi}(d)$,
we know that \eqref{eq:ebasederror} tends to zero as $L\to\infty$.
\end{proof}

We now obtain similar approximations for the variable $V^{(L,M)}$, which only depends on a finite neighborhood.
\begin{lemma}\label{lem:finite-field-approximation}
On the Galton--Watson tree, for every $L\geq 0$, as $M\to\infty$, we have
\[
V^{(L,M)}(o)\longrightarrow V^{(L)}(o)
\qquad \text{and}\qquad
\bigl[(H+\delta I)V^{(L,M)}\bigr](o)
\longrightarrow
\bigl[(H+\delta I)V^{(L)}\bigr](o),
\]
both in $L^2$.
\end{lemma}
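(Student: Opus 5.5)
The plan is to fix $L$ and show that every ingredient of $V^{(L,M)}(o)$ and of $[(H+\delta I)V^{(L,M)}](o)$ converges almost surely as $M\to\infty$ to the corresponding ingredient of $V^{(L)}$. We then upgrade almost sure convergence to $L^2$ convergence by dominated convergence, using the polynomial bound of \Cref{lem:polynomial-bounded} as the dominating function.

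\textbf{Pointwise convergence.} By \Cref{def:gamma-limit} and \Cref{lem:gamma-increasing-bounded}, for every directed edge $(u,v)$ of the (locally finite) tree $T$ we have $(\Gamma_u^{(v)})^{(M)}\uparrow\Gamma_u^{(v)}$ as $M\to\infty$. This holds for every realization, not only almost surely.

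Fix $L$ and induct on $\ell\le L$. For $\ell=0$, $b^{(0,M)}_{u\to v}=X_u=b^{(0)}_{u\to v}$. If $b^{(\ell,M)}_{w\to u}\to b^{(\ell)}_{w\to u}$ for all directed edges, then
\[
b^{(\ell+1,M)}_{u\to v}=(\Gamma_u^{(v)})^{(M)}\sum_{w\sim u,\,w\neq v}b^{(\ell,M)}_{w\to u},
\]
which is a finite sum of products of convergent sequences, so it converges to $b^{(\ell+1)}_{u\to v}$.

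For the vertex field, the denominator of $V^{(L,M)}(u)$ is
\[
t^2+\deg(u)-1+\delta-t\sum_{v\sim u}(\Gamma_v^{(u)})^{(M)}\;\ge\;t^2-1+\delta+\deg(u)(1-tc)\;\ge\;\alpha>0,
\]
since $tc<1$. The denominators are therefore uniformly bounded away from zero and converge, so $V^{(L,M)}(u)\to V^{(L)}(u)$ for each $u$. Finally,
\[
[(H+\delta I)V^{(L,M)}](o)=(\alpha+\deg(o))V^{(L,M)}(o)-t\sum_{w\sim o}V^{(L,M)}(w)
\]
involves only finitely many such terms, so it converges to $[(H+\delta I)V^{(L)}](o)$ pointwise.

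\textbf{Domination.} \Cref{lem:polynomial-bounded} gives, uniformly in $M$,
\[
|V^{(L,M)}(o)|+\bigl|[(H+\delta I)V^{(L,M)}](o)\bigr|\le C_L|B_{L+2}(T,o)|^{L+2}.
\]
Both of these bounds, and both of the pointwise limits, are controlled by this single function. It therefore suffices to check that
\[
\EE\,|B_{L+2}(T,o)|^{2(L+2)}<\infty
\]
for the Poisson Galton--Watson tree. This holds because all moments of the generation sizes $Z_k$ are finite; one can see this inductively from the compound Poisson structure, since the moment generating function of $Z_k$ is finite near $0$. Hence $|B_{L+2}|=\sum_{k\le L+2}Z_k$ has all moments.

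Dominated convergence applied to the squared differences then gives the $L^2$ convergence of both quantities. The only step requiring some care is this moment bound, which is standard, so I expect no real obstacle.
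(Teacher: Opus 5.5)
Your proposal is correct and follows essentially the same route as the paper: pointwise convergence of the truncated cavity weights and messages by induction on $L$, followed by dominated convergence using the $M$-uniform bound of \Cref{lem:polynomial-bounded} together with $\EE|B_{L+2}(T,o)|^{2L+4}<\infty$. Your explicit lower bound $\alpha$ on the denominators and your justification of the moment bound fill in details that the paper leaves implicit.
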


\begin{proof}
It is easy to check, using Definition \ref{def:gamma-limit} and
induction on $L$,  that $\lim_{M\to\infty}b_{u\to v}^{(L,M)}=b_{u\to v}^{(L)}$
for every fixed $L$.
It follows that, for every $u$ with finitely many neighbors,
\begin{align*}
    V^{(L,M)}(u)
&=
\frac{t \sum_{v\sim u}
b_{v\to u}^{(L,M)}}{t^2+\deg(u)-1+\delta
-t\sum_{v\sim u}(\Gamma_v^{(u)})^{(M)}}
\\
&\longrightarrow
\frac{t \sum_{v\sim u}
b_{v\to u}^{(L)}}{t^2+\deg(u)-1+\delta
-t\sum_{v\sim u}(\Gamma_v^{(u)}) }
=
V^{(L)}(u).
\end{align*}
In particular, it holds almost
surely at $o$ and at each of its neighbors. Therefore,
\begin{align*}
\bigl[(H+\delta I)V^{(L,M)}\bigr](o)
&=
(\alpha+\deg(o))V^{(L,M)}(o)
-t\sum_{x\sim o}V^{(L,M)}(x)\\
&\longrightarrow
(\alpha+\deg(o))V^{(L)}(o)
-t\sum_{x\sim o}V^{(L)}(x)=
\bigl[(H+\delta I)V^{(L)}\bigr](o)
\end{align*}
almost surely.

By \Cref{lem:polynomial-bounded}, there is
a constant $C_L<\infty$, independent of $M$, such that
\[
|V^{(L,M)}(o)|
+
\left|
\bigl[(H+\delta I)V^{(L,M)}\bigr](o)
\right|
\leq C_L|B_{L+2}(T,o)|^{L+2}.
\]
Taking $M\to\infty$ in this inequality gives
\[
|V^{(L)}(o)|
+
\left|
\bigl[(H+\delta I)V^{(L)}\bigr](o)
\right|
\leq C_L|B_{L+2}(T,o)|^{L+2}
\qquad\text{almost surely}.
\]
The triangle inequality therefore gives
\begin{align*}
&|V^{(L,M)}(o)-V^{(L)}(o)|
+
\left|
\bigl[(H+\delta I)(V^{(L,M)}-V^{(L)})\bigr](o)
\right|
\leq 2C_L|B_{L+2}(T,o)|^{L+2},\\
&|V^{(L,M)}(o)-V^{(L)}(o)|^2 +
\left|
\bigl[(H+\delta I)(V^{(L,M)}-V^{(L)})\bigr](o)
\right|^2
\leq 4C_L^2|B_{L+2}(T,o)|^{2L+4}.
\end{align*}
Since $L$ is fixed, we have $\EE |B_{L+2}(T,o)|^{2L+4}<\infty$.
We know from the previous paragraph that both squared differences converge to zero almost surely.  Dominated convergence therefore gives
\[
\EE|V^{(L,M)}(o)-V^{(L)}(o)|^2
\longrightarrow0
\qquad \text{and}\qquad
\EE\left|
\bigl[(H+\delta I)(V^{(L,M)}-V^{(L)})\bigr](o)
\right|^2
\longrightarrow0.
\]

\end{proof}

\section{Finite convergence}

We now transfer the tree construction to the finite SBM. 
We will first truncate both recursions, obtaining fields determined
by finite neighborhoods. Local convergence then transfers
their moments and residuals to the finite graph. Then,
an eigenvalue count identifies the resulting approximate
eigenspaces with the actual spectral subspaces.

In this section, fix $\sqrt d\leq t<\mu_{r_+}^+$.
For notational convenience, relabel the positive informative
eigenvalues and their associated eigenfunctions as
$\mu_1,\dots,\mu_{r_+}$ and $\varphi_1,\dots,\varphi_{r_+}$. Let $G_n$ be an $n$-vertex SBM with vertex labeling $\sigma:V(G_n)\to [r]$, and let  $(T,o)$ be its limiting multi-type
Galton--Watson tree. 
The following local averaging lemma is standard; see, for example, \cite[Propositions~35--36]{bordenave2018nonbacktracking}. 

\begin{lemma}[Local averaging]\label{lem:convergence}Fix  $R\geq 0$.
Let $f$ be a real function on rooted graphs $(G,u)$ with vertex labels in $[r]$. Suppose $f$ only depends on $B_R(G,u)$ and that for some fixed $C,p<\infty$, we have
\[
|f(G,u)|\leq C|B_R(G,u)|^p\qquad \text{for all $(G,u)$}.
\]
Then we have
\[
\frac1n\sum_{u\in V(G_n)}f(G_n,u)
\longrightarrow \EE f(T,o)
\qquad\text{in }L^2.
\]
\end{lemma}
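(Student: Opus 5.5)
The proof has two parts: convergence of the mean, and concentration via a second moment.

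\textbf{Mean.} Write $S_n\deq\frac1n\sum_u f(G_n,u)$. By exchangeability within each community (the label assignment is deterministic with sizes $\pi_a n$), $\EE S_n=\sum_a \pi_a\,\EE[f(G_n,u_a)]$, where $u_a$ is any vertex of type $a$. The standard coupling of the labeled $R$-ball of $(G_n,u)$ with the depth-$R$ truncation of the multitype Galton--Watson tree rooted at type $a$ succeeds with probability $1-O(\operatorname{polylog}(n)/n)$, or at least $1-o(1)$. Choosing the root type $\sigma(o)\sim\pi$ recovers the law $\widetilde\nu$. On the coupling event, $f(G_n,u)=f(T,o)$ because $f$ depends only on $B_R$.

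\textbf{Uniform integrability.} On the failure event we use the growth hypothesis. For any fixed $q$, $\sup_n\EE|B_R(G_n,u)|^{q}<\infty$, since the ball is stochastically dominated by a Galton--Watson tree with $\mathrm{Bin}(n,\max P/n)$ offspring, whose generation sizes have all moments bounded uniformly in $n$. Hölder's inequality then bounds $\EE\bigl[|f(G_n,u)|\,\mathbf 1_{\text{fail}}\bigr]$ by $C\,(\EE|B_R|^{2p})^{1/2}\,\mathbb P(\text{fail})^{1/2}\to0$. Likewise $\EE|f(T,o)|<\infty$ by the Poisson moments. Together these give $\EE S_n\to\EE f(T,o)$.

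\textbf{Variance.} It remains to show $\operatorname{Var}(S_n)\to0$. Expand
\[
\EE S_n^2=\frac1{n^2}\sum_{u,v}\EE\bigl[f(G_n,u)f(G_n,v)\bigr].
\]
The diagonal terms contribute $O(1/n)$ by the uniform moment bound. For a fixed pair $u\neq v$, we couple the joint exploration of $B_R(G_n,u)$ and $B_R(G_n,v)$ with two \emph{independent} multitype trees. The two balls are disjoint, and the explorations are nearly independent, except on an event of probability $o(1)$ (in fact $O(\operatorname{polylog} n/n)$), since each explored ball has size $O_{\mathbb P}(1)$ and hits any given vertex with probability $O(1/n)$. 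On the bad event we again use Hölder together with uniform bounds on $\EE|B_R(G_n,u)|^{4p}$. This gives
\[
\EE\bigl[f(G_n,u)f(G_n,v)\bigr]=\EE\bigl[f(T,o)\bigr]_{\sigma(u)}\,\EE\bigl[f(T,o)\bigr]_{\sigma(v)}+o(1)
\]
uniformly over pairs, where the subscript indicates the root type. Averaging over pairs yields $\EE S_n^2\to(\EE f(T,o))^2$, hence $\EE|S_n-\EE f(T,o)|^2\to0$.

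\textbf{Main obstacle.} The delicate step is the two-point coupling with unbounded $f$. One must verify that the near-independence error, weighted by the polynomial growth $|B_R|^p$, still vanishes. The cleanest route is to truncate: replace $f$ by $f\mathbf 1_{\{|B_R|\le k\}}$. This truncated function is bounded, so the total-variation coupling applies directly. The remainder is controlled in $L^2$ uniformly in $n$ by $C\,\sup_n\EE\bigl[|B_R|^{2p}\mathbf 1_{\{|B_R|>k\}}\bigr]$, which tends to $0$ as $k\to\infty$ by the uniform higher moments. Sending $n\to\infty$ first and then $k\to\infty$ completes the argument, which is the one in \cite[Propositions~35--36]{bordenave2018nonbacktracking}.
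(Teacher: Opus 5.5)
Your proof is correct, but for the fluctuation term it takes a different route from the source the paper relies on. The paper gives no proof of this lemma and simply cites \cite[Propositions~35--36]{bordenave2018nonbacktracking}. Your handling of the mean matches that source: a total-variation coupling of a single $R$-ball with the multitype Galton--Watson tree, plus uniform moment bounds on $|B_R(G_n,u)|$ to absorb the failure event.

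The cited argument, as I recall it, controls the fluctuations by bounding $\operatorname{Var}\bigl(\sum_u f(G_n,u)\bigr)$ with the Efron--Stein inequality. It exposes the adjacency matrix one vertex at a time and uses that changing the edges at one vertex only affects $f(G_n,u)$ for $u$ whose $R$-ball meets that vertex. You instead compute $\EE S_n^2$ directly through a two-point coupling of $\bigl(B_R(G_n,u),B_R(G_n,v)\bigr)$ with two independent trees.

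For fixed $R$ your route is more elementary and self-contained. The Hölder bound on the bad event already handles the polynomial growth, so the truncation in your last paragraph is optional. However, it only gives $o(1)$ with no rate. The Efron--Stein route gives an explicit polynomial rate and remains valid for radii growing like $\kappa\log n$, which is what the cited source needs. So your closing attribution to \cite{bordenave2018nonbacktracking} fits the mean but, to my recollection, not the variance step. Nothing in your argument depends on that attribution.
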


We previously fixed $\delta=\delta_i=\delta(\mu_i)$ and $X=X_i$. In the following proposition, we will add the subscript $i$ to emphasize the dependence on $i\in[r_+]$, and write $V_i$ for the field $V$ on $T$ as in Definition \ref{def:V}.

\begin{prop}\label{prop:finite-fields}
Let $\delta_i=\delta(\mu_i)$, $s_i^2\deq \EE[V_i(o)^2]>0$, and
$a_i\deq \EE[V_i(o)X_i(o)]$.
There are deterministic sequences $L(n),M(n)\to\infty$
such that, for
\[
V_{i,n}(u):=V_i^{(L(n),M(n))}(G_n,u),
\]
we have:
\begin{enumerate}
    \item $\frac1n\langle V_{i,n},V_{j,n}\rangle
\longrightarrow \mathbf 1_{\{i=j\}}s_i^2$ and $\frac1n\langle V_{i,n},X_j\rangle
\longrightarrow \mathbf 1_{\{i=j\}}a_i$ in $L^2$;
\item $\frac1n
\|(H(t,G_n)+\delta_i I)V_{i,n}\|^2
\longrightarrow0$;
\item The empirical measure
\[
\frac1n\sum_{u\in V(G_n)}
\delta_{(\sigma(u),V_{1,n}(u),\ldots,V_{r_+,n}(u))}
\]
converges weakly in probability to the law of
$(\sigma(o),V_1(o),\ldots,V_{r_+}(o))$.
\end{enumerate} 
\end{prop}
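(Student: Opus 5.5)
The plan is a diagonal argument. For each fixed pair $(L,M)$ we prove the statement for the fields $V_i^{(L,M)}$, with limits given by the tree quantities at level $(L,M)$. Then we let $M\to\infty$ and afterwards $L\to\infty$ on the tree, and choose $L(n),M(n)\to\infty$ slowly enough that the finite-$n$ errors still vanish.

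\textbf{Fixed $(L,M)$.} By \Cref{lem:local}, the quantities
\[
V_i^{(L,M)}(u)V_j^{(L,M)}(u),\quad V_i^{(L,M)}(u)X_j(u),\quad \bigl|[(H(t,G)+\delta_iI)V_i^{(L,M)}](u)\bigr|^2
\]
are determined by $B_{L+M+2}(G,u)$. The first two also carry the labels $\sigma$, but only through the ball. By \Cref{lem:polynomial-bounded}, each is bounded by $C\,|B_{L+M+2}(G,u)|^{p}$ for some $C,p$ depending on $L,M$. We apply \Cref{lem:convergence} to each quantity, and also to $g(\sigma(u),V_{1}^{(L,M)}(u),\dots)$ for $g$ ranging over a countable convergence-determining family of bounded Lipschitz functions. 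This gives $L^2$ convergence of $\frac1n\langle V^{(L,M)}_{i},V^{(L,M)}_{j}\rangle$, $\frac1n\langle V^{(L,M)}_i,X_j\rangle$, $\frac1n\|(H+\delta_i)V_i^{(L,M)}\|^2$ and the empirical test integrals to their expectations on $(T,o)$.

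\textbf{Tree limits.} By \Cref{lem:finite-field-approximation}, and then \Cref{lem:vertex-field-convergence} together with \Cref{lemma:residual}, we have $V_i^{(L,M)}(o)\to V_i^{(L)}(o)\to V_i(o)$ and $[(H+\delta_i)V_i^{(L,M)}](o)\to0$ in $L^2$, taking $M\to\infty$ first and then $L\to\infty$. By Cauchy--Schwarz, the expectations of the products converge to $\EE[V_i(o)V_j(o)]$ and $\EE[V_i(o)X_j(o)]$, the residual expectation tends to $0$, and the laws converge. Diagonal extraction then produces the sequences $L(n),M(n)$. The intended order is: for each $k$, first choose $L_k$ and $M_k$ so that the tree error is below $1/k$; then choose $n_k$ so that the finite error at $(L_k,M_k)$ is below $1/k$ in probability; set $(L(n),M(n))=(L_k,M_k)$ for $n_k\le n<n_{k+1}$. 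This gives (2), (3), and the limits in (1), except for identifying the off-diagonal limits as zero and checking $s_i^2>0$.

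\textbf{The real content: the tree identities.} We must show $\EE[V_i(o)V_j(o)]=0$ and $\EE[V_i(o)X_j(o)]=0$ for $i\ne j$, and $s_i^2>0$. For orthogonality we use the symmetries of the construction. The weights $\Gamma$ depend only on $T_0$ and on $\delta_i$. Conditioned on $T_0$, $V_i(o)$ is a linear function of the labels: it is an $L^2$ limit of $\sum_x w_x(T_0)X_i(x)$. Under the broadcast process conditioned on $T_0$,
\[
\EE[X_i(x)X_j(y)\mid T_0]=\mathbf 1_{\{i=j\}}\rho_i^{\dist(x,y)} .
\]
This holds because $\varphi_i,\varphi_j$ are orthonormal eigenfunctions of the reversible transition kernel $Q/d$ with stationary law $\pi$: $\EE[\varphi_i(\sigma_x)\varphi_j(\sigma_y)]=\langle\varphi_i,(Q/d)^{k}\varphi_j\rangle_\pi$. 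So the finite-level cross moments vanish exactly, and the identities pass to the limit in $L^2$.

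Positivity of $s_i^2$ is where I expect the main obstacle. I would prove $a_i\neq0$, which is stronger. From the decomposition in \Cref{prop:bL-converge}, $\EE[b_{w\to o}X_o\mid T_0]=\rho\, h(T_w)$, where $X_o$ is the parent label for $w$, so $\EE[b X_{p}]=\rho\,\EE h$. Hence
\[
a_i=\EE\left[\frac{t\rho\sum_{w\sim o}h(T_w)}{t^2+\deg(o)-1+\delta-t\sum_{w\sim o}\Gamma_w}\right].
\]
The integrand is nonnegative. By \Cref{prop:kappa}(1), applied with $\kappa=d/\mu_i=1/\rho>1>\sqrt d c_\delta$, $h>0$ on infinite trees, and these occur with positive probability since $d>1$. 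So $a_i>0$, and by Cauchy--Schwarz $s_i^2\ge a_i^2>0$. The subtle points to verify are that the orthogonal decomposition of $b_w$ correctly handles the root label when computing $\EE[b_{w\to o}X_o]$, and that the limit interchanges in the diagonal step are justified by the uniform polynomial domination from \Cref{lem:polynomial-bounded}.
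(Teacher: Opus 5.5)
Your proposal is correct and follows the paper's proof essentially step for step. It uses local averaging (\Cref{lem:convergence}) at fixed $(L,M)$ via \Cref{lem:local,lem:polynomial-bounded}, the iterated tree limits from \Cref{lem:finite-field-approximation,lem:vertex-field-convergence,lemma:residual}, conditional orthogonality $\EE[X_i(x)X_j(y)\mid T_0]=0$ for $i\neq j$, and a diagonal choice of $(L_k,M_k,n_k)$. Your proof of $s_i^2>0$ via $a_i>0$ is exactly the argument the paper gives later, in the proof of \Cref{thm:eigenvectors}.

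Two cosmetic fixes are needed. When $\mu_i=d$ you only have $\kappa=1$, not $\kappa>1$, but $1>\sqrt d\,c_\delta$ still suffices. And the $n_k$ should be chosen so that the finite-$n$ errors are small in $L^2$, not merely in probability, since (1) is claimed in $L^2$.
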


\begin{proof}

We choose bounded Lipschitz functions
$\Psi_m:[r]\times\RR^{r_+}\to[-1,1]$ such that, for
probability measures $(\nu_\ell)_{\ell\geq 1},\nu$ on
$[r]\times\RR^{r_+}$, $\nu_\ell\to\nu$ weakly if and only if $\int\Psi_m\,d\nu_\ell\longrightarrow\int\Psi_m\,d\nu$ for every $m\geq 1$. For example, let $(\Psi_m)_{m\ge1}$ enumerate the functions
\[
\Psi_{a,\alpha}(b,x)
\deq \mathbf1\{b=a\}\prod_{j=1}^{r_+}(\tanh x_j)^{\alpha_j},
\qquad
a\in[r],\quad \alpha\in\mathbb Z_{\ge0}^{r_+}.
\]
Each function is Lipschitz and takes values in $[-1,1]$. The linear span of these functions is an algebra of bounded
continuous functions that strongly separates points. Thus this
family determines weak convergence by
\cite[Chapter~3, Theorem~4.5(b)]{ethier1986markov}.

Write
\[
\eta_{n,L,M}\deq 
\frac1n\sum_{u\in V(G_n)}
\delta_{(\sigma(u),V_1^{(L,M)}(G_n,u),\ldots,
V_{r_+}^{(L,M)}(G_n,u))}
\]
and let $\eta$ be the law of
$(\sigma(o),V_1(o),\ldots,V_{r_+}(o))$.

Recall (e.g., from \cite[Lemma 27]{bordenave2018nonbacktracking}) that if $T_0$ is the underlying unlabelled tree, then we have $\EE(X_i(x)X_j(y)\mid T_0)=0$ for $i\neq j$.
Thus, by \Cref{lem:finite-field-approximation,lem:vertex-field-convergence} and
Cauchy--Schwarz, we have
\[
\lim_{L\to\infty}\lim_{M\to\infty}
\EE\left[V_i^{(L,M)}(o)V_j^{(L,M)}(o)\right]
=
\mathbf 1_{\{i=j\}}s_i^2,\qquad
\lim_{L\to\infty}\lim_{M\to\infty}
\EE\left[V_i^{(L,M)}(o)X_j(o)\right]
=
\mathbf 1_{\{i=j\}}a_i.
\]
Similarly, \Cref{lem:finite-field-approximation,lemma:residual}
give
\[
\lim_{L\to\infty}\lim_{M\to\infty}
\EE\left|
\bigl[(H(t,T)+\delta_i I)V_i^{(L,M)}\bigr](o)
\right|^2=0.
\]
Since each $\Psi_m$ is Lipschitz, \Cref{lem:vertex-field-convergence,lem:finite-field-approximation} also give
\begin{align*}
&\lim_{L\to\infty}\lim_{M\to\infty}
\EE\Psi_m\bigl(
\sigma(o),V_1^{(L,M)}(o),\ldots,V_{r_+}^{(L,M)}(o)
\bigr)=
\EE\Psi_m\bigl(\sigma(o),V_1(o),\ldots,V_{r_+}(o)\bigr)
=
\int\Psi_m\,d\eta.
\end{align*}
For fixed $L,M$, \Cref{lem:local,lem:polynomial-bounded}
allow us to apply \Cref{lem:convergence} to
\[
V_i^{(L,M)}(u)V_j^{(L,M)}(u),
\qquad
V_i^{(L,M)}(u)X_j(u),\qquad \left|
\bigl[(H(t,G)+\delta_i I)V_i^{(L,M)}\bigr](u)
\right|^2,
\]
and
\[
\Psi_m\bigl(
\sigma(u),V_1^{(L,M)}(u),\ldots,V_{r_+}^{(L,M)}(u)
\bigr).
\]
For any integer $k\geq 1$, we can therefore choose integers $L_k,M_k\geq k$
and $n_k$ such that for any $n\geq n_k$,
\begin{align}\label{eq:limitbounds}
\begin{split}
\max_{i,j\in[r_+]}
&\left\|
\frac1n\sum_{u\in V(G_n)}
V_i^{(L_k,M_k)}(G_n,u)V_j^{(L_k,M_k)}(G_n,u)
-\mathbf 1_{\{i=j\}}s_i^2
\right\|_{2}
\leq\frac1k,\\
\max_{i,j\in[r_+]}
&\left\|
\frac1n\sum_{u\in V(G_n)}
V_i^{(L_k,M_k)}(G_n,u)X_j(u)
-\mathbf 1_{\{i=j\}}a_i
\right\|_{2}
\leq\frac1k,\\
\max_{i\in[r_+]}
&\left\|
\frac1n\sum_{u\in V(G_n)}
\left|
\bigl[(H(t,G_n)+\delta_i I)V_i^{(L_k,M_k)}\bigr](u)
\right|^2
\right\|_{2}
\leq\frac1k,\\
\max_{1\leq m\leq k}
&\left\|
\int\Psi_m\,d\eta_{n,L_k,M_k}
-\int\Psi_m\,d\eta
\right\|_{2}
\leq\frac1k.
\end{split}
\end{align}
For every $n\geq n_1$, choose $k$ such that $n_k\leq n<n_{k+1}$, and set $(L(n),M(n))=(L_k,M_k)$. 
Then $L(n),M(n)\to\infty$, and the first three
inequalities above prove (1) and (2) in $L^2$.

For (3), set $\eta_n=\eta_{n,L(n),M(n)}$ and define
\[
d_*(\alpha,\beta)
\deq 
\sum_{m=1}^{\infty}2^{-m}
\left|\int\Psi_m\,d\alpha-\int\Psi_m\,d\beta\right|.
\]
By the choice of $(\Psi_m)$, convergence in $d_*$
is equivalent to weak convergence.
For $n_k\leq n<n_{k+1}$, the fourth inequality of \eqref{eq:limitbounds} and
$|\Psi_m|\leq1$ give
\[
\|d_*(\eta_n,\eta)\|_{L^2}
\leq
\sum_{m=1}^k2^{-m}\frac1k
+
2\sum_{m>k}2^{-m}
\leq
\frac1k+2^{1-k}
\longrightarrow0.
\]
Thus $d_*(\eta_n,\eta)\to0$ in probability, proving (3).
\end{proof}

It remains to identify these approximate eigenspaces
with the spectral subspaces of $H(t,G_n)$.

\begin{prop}\label{prop:eigenspace-approximation}
There exist orthonormal eigenvectors $v_{1,n},\ldots,v_{r_+,n}$
of $H(t,G_n)$, with corresponding eigenvalues
$-\delta_{1,n},\ldots,-\delta_{r_+,n}$, such that
\[
\delta_{i,n}\xrightarrow{\mathbb P}\delta_i
\qquad (1\le i\le r_+).
\]
For each distinct $\mu\in\{\mu_1,\ldots,\mu_{r_+}\}$, write
$J_\mu=\{i:\mu_i=\mu\}$. There is an orthogonal matrix
$O_{\mu,n}$ indexed by $J_\mu$ such that
\[
\max_{i\in J_\mu}
\left\|
\sum_{j\in J_\mu}(O_{\mu,n})_{ji}v_{j,n}
-\frac{V_{i,n}}{\|V_{i,n}\|_2}
\right\|_2
\xrightarrow{\mathbb P}0.
\]
In particular, if $\mu_i$ is simple, the sign of $v_{i,n}$
can be chosen so that
\[
\left\|v_{i,n}-\frac{V_{i,n}}{\|V_{i,n}\|_2}\right\|_2
\xrightarrow{\mathbb P}0.
\]
\end{prop}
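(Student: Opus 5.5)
We combine the approximate eigenvectors of \Cref{prop:finite-fields} with the known outlier count. Throughout we work on the high-probability event where the four estimates of \Cref{prop:finite-fields} hold up to $o(1)$ errors.

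\textbf{Step 1: many eigenvalues near each $-\delta_i$.} Write $W_{i,n}\deq V_{i,n}/\|V_{i,n}\|_2$. By \Cref{prop:finite-fields}(1), $\|V_{i,n}\|_2^2/n\to s_i^2>0$ and the Gram matrix of $(W_{i,n})_{i\le r_+}$ tends to the identity. Part (2) then gives $\|(H+\delta_iI)W_{i,n}\|_2=o_{\mathbb P}(1)$. We would use the standard perturbation bound for a self-adjoint $H$: if $W$ is a near-orthonormal $k$-frame with $\|(H+\delta I)W\|\le\varepsilon$, then $H$ has at least $k$ eigenvalues in $[-\delta-C\varepsilon',-\delta+C\varepsilon']$, with $\varepsilon'\to0$. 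To get this, compare the min-max characterization on $\operatorname{span}W$ applied to $(H+\delta I)^2$: that span has dimension $k$ and Rayleigh quotients $O(\varepsilon^2)$, so $(H+\delta I)^2$ has at least $k$ eigenvalues at most $O(\varepsilon^2)$. Applying this to each group $J_\mu$ gives at least $|J_\mu|$ eigenvalues of $H(t,G_n)$ within $o(1)$ of $-\delta(\mu)$. Since $\delta$ is strictly increasing by \Cref{prop:delta}, the values $\delta(\mu)$ are distinct, so these groups are disjoint. This produces at least $r_+$ negative eigenvalues near the prescribed points.

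\textbf{Step 2: no other eigenvalues.} Take $t=\sqrt d$; the positive and negative cases are symmetric. We invoke the outlier count of \cite{dong2026bethe}: with probability tending to one, $H(\sqrt d,G_n)$ has exactly $r_+$ eigenvalues below $-\varepsilon$ for small $\varepsilon$, and all remaining eigenvalues are at least $-o(1)$. Combining with Step 1, the $r_+$ most negative eigenvalues are exactly those found above, so $\delta_{i,n}\to\delta_i$ in probability. Let $F_{\mu,n}$ be the spectral subspace spanned by the eigenvectors whose eigenvalues are near $-\delta(\mu)$. By Step 2, it has dimension exactly $|J_\mu|$, and its eigenvalues are separated from the rest of the spectrum by a gap $g_\mu>0$ bounded below with high probability. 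The gap comes from the distinct values $\delta(\mu')$ and the absence of other spectrum below $-o(1)$.

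\textbf{Step 3: Davis--Kahan.} For $x\in\operatorname{span}\{W_{i,n}:i\in J_\mu\}$, decompose $x$ along $F_{\mu,n}$ and its complement. Then
\[
\|(I-\operatorname{Proj}_{F_{\mu,n}})x\|_2\le g_\mu^{-1}\|(H+\delta(\mu)I)x\|_2=o_{\mathbb P}(1)\|x\|_2 .
\]
Since the two spaces have equal dimension and the $W_{i,n}$ are nearly orthonormal, $\operatorname{Proj}_{F_{\mu,n}}$ maps the frame $(W_{i,n})_{i\in J_\mu}$ to a near-orthonormal frame in $F_{\mu,n}$. Polar decomposition of the $|J_\mu|\times|J_\mu|$ coefficient matrix of these projections, written in an eigenbasis $(v_{j,n})_{j\in J_\mu}$ of $F_{\mu,n}$, yields the orthogonal matrix $O_{\mu,n}$ with the stated $o_{\mathbb P}(1)$ error. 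When $\mu$ is simple, $O_{\mu,n}=\pm1$, which is a choice of sign.

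\textbf{Main obstacle.} The main difficulty is Step 2. It requires the count from \cite{dong2026bethe} in a form that includes a uniform spectral gap: no eigenvalues may accumulate in $(-\infty,-\varepsilon)$ beyond the $r_+$ informative ones. I expect that result to give exactly this; otherwise one would need a separate argument that the bulk spectrum of $H(\sqrt d,G_n)$ is asymptotically nonnegative.
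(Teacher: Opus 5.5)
Your proposal is correct and follows essentially the paper's route. It uses approximate eigenvectors from \Cref{prop:finite-fields}, a spectral-theorem (Davis--Kahan type) bound that places them near a spectral subspace, and the outlier count of \cite[Corollary~3.4]{dong2026bethe} to force that subspace to have dimension exactly $|J_\mu|$. Its orthonormalization is equivalent to the paper's $P_{\mu,n}U_{\mu,n}\bigl(U_{\mu,n}^{\mathsf T}P_{\mu,n}U_{\mu,n}\bigr)^{-1/2}$.

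The paper streamlines your Steps 1--3 by projecting onto fixed windows $I_\mu=(-\delta_\mu-\gamma,-\delta_\mu+\gamma)$. Then $\|(I-P_{\mu,n})U_{\mu,n}\|\le\gamma^{-1}\|(H_n+\delta_\mu I)U_{\mu,n}\|$ gives the lower dimension count without min-max or a separately established gap. The only input needed from the count is $\#\{\lambda<-\epsilon\}\le r_+$ with probability tending to one, which disposes of the obstacle you flag.
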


\begin{proof}
Write $H_n=H(t,G_n)$, $m_\mu=|J_\mu|$, and
$\delta_\mu=\delta(\mu)$. All matrix norms below are operator
norms, and all limits are in probability.

Choose $\gamma>0$ so that the intervals
\[
I_\mu=(-\delta_\mu-\gamma,-\delta_\mu+\gamma)
\]
are pairwise disjoint and negative. Let $P_{\mu,n}$ be the
spectral projection of $H_n$ onto $I_\mu$, and let $U_{\mu,n}$
have columns $V_{i,n}/\|V_{i,n}\|_2$, $i\in J_\mu$.
Proposition~\ref{prop:finite-fields} gives
\[
U_{\mu,n}^{\mathsf T}U_{\mu,n}\longrightarrow I_{m_\mu},
\qquad
\|(H_n+\delta_\mu I)U_{\mu,n}\|\longrightarrow0.
\]
By the spectral theorem,
\[
\|(I-P_{\mu,n})U_{\mu,n}\|
\le \gamma^{-1}\|(H_n+\delta_\mu I)U_{\mu,n}\|
\longrightarrow0.
\]
Consequently,
\[
U_{\mu,n}^{\mathsf T}P_{\mu,n}U_{\mu,n}
\longrightarrow I_{m_\mu},
\]
so $\operatorname{rank}P_{\mu,n}\ge m_\mu$ with probability
tending to one.

Choose $\epsilon>0$ so that every $I_\mu$ lies below
$-\epsilon$. The negative-outlier bound of \cite[Corollary~3.4]{dong2026bethe} gives, with probability
tending to one,
\[
\sum_\mu\operatorname{rank}P_{\mu,n}
\le \#\{\lambda\in\sigma(H_n):\lambda<-\epsilon\}
\le r_+.
\]
Since $\sum_\mu m_\mu=r_+$, we obtain
$\operatorname{rank}P_{\mu,n}=m_\mu$ for every $\mu$.

On this event, set
\[
W_{\mu,n}
=P_{\mu,n}U_{\mu,n}
\bigl(U_{\mu,n}^{\mathsf T}P_{\mu,n}U_{\mu,n}\bigr)^{-1/2}.
\]
Its columns form an orthonormal basis of
the range of $P_{\mu,n}$, and
\[
\|W_{\mu,n}-U_{\mu,n}\|\longrightarrow0.
\]
Moreover, since $P_{\mu,n}$ commutes with $H_n$,
\[
(H_n+\delta_\mu I)W_{\mu,n}
=
P_{\mu,n}(H_n+\delta_\mu I)U_{\mu,n}
\bigl(U_{\mu,n}^{\mathsf T}P_{\mu,n}U_{\mu,n}\bigr)^{-1/2}
\longrightarrow0.
\]
Thus all eigenvalues in $I_\mu$ converge to $-\delta_\mu$.

Choose an orthonormal eigenbasis $(v_{i,n})_{i\in J_\mu}$
of the range of $P_{\mu,n}$. An orthogonal change of
basis takes these vectors to the columns of $W_{\mu,n}$,
giving the asserted approximation. The eigenvectors from
distinct intervals are orthogonal. When $m_\mu=1$, the
change of basis is simply a choice of sign.
\end{proof}
\begin{proof}[Proof of \Cref{thm:eigenvectors}]
Take $t=\sqrt d$. Proposition~\ref{prop:delta} gives the
deterministic strictly increasing function $\delta$.
Proposition~\ref{prop:eigenspace-approximation} gives the
outliers associated with each positive informative eigenvalue,
with the correct multiplicities. The outlier count used in its
proof shows that these are the first $r_+$ eigenvalues of
$H(t,G_n)$. Their ordering follows from the strict increase
of $\delta$.

For a negative planted eigenvalue $\mu<-t$, take
$\delta=\delta(|\mu|)$. In the boundary-message and vertex-field
constructions, use $-\Gamma_\delta$ in place of $\Gamma_\delta$
and $-t$ in place of $t$. The denominator in the vertex-field
formula is unchanged. Since $\rho=\mu/d<0$, the conditional
mean is propagated by
\[
\rho(-\cL_\delta)=|\rho|\cL_\delta.
\]
The same convergence estimates apply, and the resulting field
satisfies
\[
(H(-t,T)+\delta I)V=0.
\]
The local approximation, finite convergence, and corresponding
negative-outlier count therefore give the asserted eigenvalue
limits for $H(-t,G_n)$.

It remains to verify the overlap assertion. For either sign,
write $\delta_i=\delta(|\mu_i|)$, and let $\Gamma_i$ and $h_i$
be the positive cavity weight and transfer eigenfunction at
this shift. If $K$ is the root degree and $T_1,\ldots,T_K$
are its forward subtrees, taking conditional expectations
in the vertex-field formula gives
\[
a_i
=\frac{t|\mu_i|}{d}\,
\EE\left[
\frac{\sum_{j=1}^K h_i(T_j)}
{t^2+K-1+\delta_i-t\sum_{j=1}^K\Gamma_i(T_j)}
\right]>0.
\]
The denominator is positive, and the numerator is positive
whenever a forward subtree survives, by
Proposition~\ref{prop:kappa}. Thus $a_i>0$, and
Cauchy--Schwarz also gives $s_i\ge a_i>0$.

Fix a distinct informative eigenvalue $\mu$ and write
$J_\mu=\{i:\mu_i=\mu\}$. Let $(w_{i,n})_{i\in J_\mu}$
be the orthonormal basis of its outlier subspace constructed
in Proposition~\ref{prop:eigenspace-approximation}.
By Proposition~\ref{prop:finite-fields},
\[
\left(
\frac{\langle w_{i,n},X_j\rangle}{\sqrt n}
\right)_{i,j\in J_\mu}
\xrightarrow{\mathbb P}
\operatorname{diag}(a_i/s_i:i\in J_\mu).
\]
Consequently, with probability tending to one, the smallest
singular value of this matrix is at least
\[
c_\mu=\frac12\min_{i\in J_\mu}\frac{a_i}{s_i}>0.
\]
Since the vectors $X_i/\sqrt n$ are orthonormal, this is
precisely the claimed lower bound on the projection of
every planted vector onto the outlier subspace.
When $\mu_i$ is simple, the preceding convergence gives
\eqref{eq:eigenvector-overlap} with $q_{\mu_i}=a_i/s_i>0$.
\end{proof}
\section{Weak recovery}\label{sec:recovery}
Given our convergence to nontrivial eigenvectors, we use standard arguments to perform weak recovery.
\begin{proof}[Proof of \Cref{thm:weak-recovery}]
Let $S:=\sum_{\substack{i\geq 2\\\mu_i>\sqrt{d}}}c_{\mu_i}^2$. If $S=0$ we are done.
Suppose $S>0$, and let
\[
m=\big|\{i\ge2:|\mu_i|>\sqrt d\}\big|.
\]
Let $E_n$ be the sum of all informative outlier subspaces
of $H(\sqrt d,G_n)$ and $H(-\sqrt d,G_n)$.
Choose an orthonormal basis $U_n\in\RR^{n\times m}$ of
$E_n\cap\mathbf1^\perp$, and write
\[
x_u\deq \sqrt n\,U_n(u,\cdot)^{\mathsf T}.
\]
The joint limits in Propositions~\ref{prop:finite-fields}
and~\ref{prop:eigenspace-approximation} give
\[
\sum_{\substack{i\ge2\\|\mu_i|>\sqrt d}}
\left\|U_n^{\mathsf T}\frac{X_i}{\sqrt n}\right\|_2^2
\ge S-o_{\mathbb P}(1).
\]
Indeed, the combined outlier basis has a diagonal limiting
correlation matrix with the planted directions.
Intersecting with $\mathbf1^\perp$ removes only the constant
planted direction asymptotically, including when $d$ has
multiplicity greater than one.

Put
\[
L\deq \sqrt{\pi_{\min}S},
\qquad
M\deq \frac{4\sqrt2(r-1)}{L},
\qquad
f_u\deq \frac{x_u}{\max\{M,\|x_u\|_2\}}.
\]
Fix any matrix $R\in\RR^{r\times m}$ satisfying
$R^{\mathsf T}R=I_m$ and $R^{\mathsf T}\mathbf1=0$.
Independently over vertices, sample
\[
\mathbb P(\widehat\sigma_n(u)=b\mid G_n)
=\frac{1+(Rf_u)_b}{r},
\qquad b\in[r].
\]
These are valid probabilities because $\|f_u\|_2\le1$.

To bound the overlap, define
\[
C_n=\frac1n\sum_u(e_{\sigma(u)}-\pi)x_u^{\mathsf T},
\qquad
C_{f,n}=\frac1n\sum_u(e_{\sigma(u)}-\pi)f_u^{\mathsf T},
\]
where $e_a$ is the $a$th coordinate vector.
Orthonormality of the planted eigenfunctions gives
\[
\|C_n\|_{\mathrm F}^2
\ge
\pi_{\min}
\sum_{\substack{i\ge2\\|\mu_i|>\sqrt d}}
\left\|U_n^{\mathsf T}\frac{X_i}{\sqrt n}\right\|_2^2.
\]
Thus $\|C_n\|_{\mathrm F}\ge L/2$ with probability
tending to one. Since
$n^{-1}\sum_u\|x_u\|_2^2=m$,
\[
\|MC_{f,n}-C_n\|_{\mathrm F}
\le
\frac{\sqrt2}{Mn}\sum_u\|x_u\|_2^2
\le \frac L4.
\]
Consequently,
\[
\|C_{f,n}\|_{\mathrm F}\ge\frac{L}{4M}
\]
with probability tending to one.

Condition on $G_n$ and $\sigma$. The expected centered
confusion matrix is
\[
D_n\deq \frac1r C_{f,n}R^{\mathsf T}.
\]
It has zero row and column sums, and
$\|D_n\|_{\mathrm F}=\|C_{f,n}\|_{\mathrm F}/r$.
For any such nonzero matrix $D$,
\[
\max_{\tau\in S_r}\sum_bD_{\tau(b),b}
\ge\frac{\|D\|_{\mathrm F}}r.
\]
Indeed,
$\mathbf1\mathbf1^{\mathsf T}/r+D/(r\|D\|_{\mathrm F})$
is doubly stochastic, so the inequality follows from
Birkhoff's theorem.

It follows that some permutation has conditional expected
centered overlap at least
\[
\frac{L}{4Mr^2}
=\frac{\pi_{\min}S}{16\sqrt2\,r^2(r-1)}.
\]
Conditional independence and Hoeffding's inequality now give
the claimed bound. When $S=0$, the conclusion is immediate,
since centered overlap is always nonnegative.
\end{proof}
 
\bibliographystyle{amsplain}
\bibliography{ref}

\end{document}